\newtheorem{theorem}{Theorem}[section]
\newtheorem{proposition}[theorem]{Proposition}
\newtheorem{corollary}[theorem]{Corollary}
\newtheorem{conjecture}[theorem]{Conjecture}
\newtheorem{remark}[theorem]{Remark}
\date{}
\begin{document}

\title[Convergence of random holomorphic functions ]{Convergence of random holomorphic functions with real zeros and extensions of the stochastic zeta function}
\author[Joseph Najnudel]{{Joseph} Najnudel}
\address{School of Mathematics, University of Bristol, UK}
\email{joseph.najnudel@bristol.ac.uk}
\author[Ashkan Nikeghbali]{{Ashkan} Nikeghbali}
\address{Institut f\"ur Mathematik, Universit\"at Z\"urich} \email{ashkan.nikeghbali@math.uzh.ch}

\date{\today} 
\begin{abstract}
In this article, we provide a unified framework for studying the convergence of rescaled characteristic polynomials of random matrices from various classical ensembles as well as functional convergence results for the Riemann zeta function.  To this end, we consider the more general viewpoint of converging point processes (a special case of which is the sequence of converging eigenvalue point processes from random matrix ensembles), and we identify sufficient conditions under which the convergence of random point processes on the real line implies the convergence in law, for the topology of uniform convergence on compact sets, of suitable random holomorphic functions whose zeros are given by the point processes which are considered.   Our results extend convergence results for rescaled characteristic polynomials obtained by various authors (in the case of the circular unitary ensemble, the limiting random analytic function is called the stochasic zeta function). We also show that for a wide class of point processes associated with these limiting random holomorphic functions (we can often interpret these points as the spectrum of some random operator), their Stieltjes transform follows for almost all points of the real line the standard Cauchy distribution, reminiscent of the results by Aizenman and Warzel (\cite{AW15}) in the case of the sine kernel point process.
\end{abstract}

\maketitle

\textit{Key words :} random holomorphic function, point process, determinantal sine-kernel process, random matrix, characteristic polynomial, Riemann zeta function

\textit{AMS 2020 Mathematics Subject Classification: } 11M50; 60B20;  60F17;  60G55.   %%%%;  %%60G70;  60G09;  60F05; 60J80   % Edit. Separate items with ;

\tableofcontents
\newpage
\section{Introduction}

The past two decades have witnessed  tremendous developments in the so called random matrix approach in number theory, which originated in the celebrated Montgomery conjecture, further extended by Rudnik and Sarnak \cite{RS96} and known today as the GUE conjecture,  which asserts that after proper rescaling, the ordinates of the zeroes of the Riemann zeta function on the critical line behave as a sine kernel point process, which can be obtained  for instance as the scaling limit of eigenangles of random unitary matrices (or matrices from  the Gaussian Unitary Ensemble or in short GUE). Inspired by this "spectral" interpretation for the statistics of the zeros of the Riemann zeta function, Keating and Snaith \cite{KS00} proposed a new probabilistic model to predict several very fine results on the distribution of the values of the Riemann zeta function on the critical line, namely the characteristic polynomial of random unitary matrices:
\begin{equation}
	Z_n(X)=\det (Id-U_n^* X),
\end{equation}where $U_n$ is a matrix from the Circular Unitary Ensemble (in short CUE) of size $n$, i.e. a random matrix from the unitary group $U(n)$ equipped with the Haar measure (normalized to be a probability measure). When $X=e^{i t}$, with $t\in \mathbb R$, $Z_n(X)$ is thought of as a good model for $\zeta(1/2+ i t)$, $t\in\mathbb R$. In particular, computing $\mathbb E[|Z_n(1)|^{2k}]$, for $k\in\mathbb C$, with $\Re k>-1/2$, they were able to conjecture the moments for the Riemann zeta function: $
\frac{1}{T}\int_0^T |\zeta(1/2+it)|^{2k} dt$ when $T\to\infty$ (see \cite{KS00}). Following this work, many new questions and problems were raised, and the characteristic polynomial of a random unitary matrix turned out to be a very rich and deep mathematical object with further connections to number theory (function fields), representation theory, supersymmetry, the Gaussian multiplicative chaos and log-correlated fields, the theory of orthogonal polynomials on the unit circle, etc. 

A few general conceptual questions were then raised over the years, in particular about the existence of natural infinite-dimensional objects "sitting" above all random unitary matrices, and which could be obtained as scaling limits from random matrix theory. Let us mention a few such questions which are of interest to our work in this article:
\begin{itemize}
	\item Given the success of the characteristic polynomial to make predictions for the distribution of values of the Riemann zeta function on the critical line, is there a natural random holomorphic function that could be obtained as some scaling limit from the characteristic polynomial, with all zeroes on some line? If such an object can be found, what is its relation to the Riemann zeta function and the GUE conjecture?
	\item For admissible $s_1,\cdots, s_r, t_1\cdots, t_r \in\mathbb C$, is there a limit, when $n\to\infty$, to the expectation $$\mathbb E \left[ \frac{Z_n(e^{2 i \pi s_1/n})\cdots Z_n(e^{2 i \pi s_r/n})}{Z_n(e^{2 i \pi t_1/n})\cdots Z_n(e^{2 i \pi t_r/n})}\right]?$$ Is there a limit to the ratios inside the expectation? 
\end{itemize}

There were various approaches to these problems and to identify the possible scaling limits mentioned above. In \cite{CNN17} we proved that the natural scaling limit is the  following random analytic function, defined for $s\in \mathbb C$, as 
\begin{equation}
	\xi_\infty(s)=e^{i \pi s} \prod_{k\in\mathbb Z}\left(1-\frac{s}{y_k} \right),
\end{equation}where $(y_k)_{k\in\mathbb Z}$ is a sine kernel point process, and the product being understood as a principal value, i.e.
$$\prod_{k\in\mathbb Z}\left(1-\frac{s}{y_k} \right)=\left(1-\frac{s}{y_0}\right) \lim_{n\to \infty} \prod_{k=1}^n \left(1-\frac{s}{y_k} \right)\left(1-\frac{s}{y_{-k}} \right).$$
This random holomorphic function  naturally appeared in our solution to some of the problems mentioned above:
\begin{itemize}
	\item as a scaling limit of the characteristic polynomial of random unitary matrices: if we note $$\xi_n(s)=\frac{Z_n(e^{2 i \pi s/n})}{Z_n(1)},$$then $\xi_n(s)$ converges to $\xi_{\infty}$ both almost surely (within the setting of virtual isometries) and in distribution on the space of holomorphic functions equipped with the norm of uniform convergence on compact sets;
	\item as a limiting function in the ratio conjecture: we identified that for $s_1,\cdots, s_r, t_1\cdots, t_r \in\mathbb C$ and $t_j\notin (y_k)_{k\in \mathbb Z}$, the ratio 
	\begin{equation}
		\frac{Z_n(e^{2 i \pi s_1/n})\cdots Z_n(e^{2 i \pi s_r/n})}{Z_n(e^{2 i \pi t_1/n})\cdots Z_n(e^{2 i \pi t_r/n})}
	\end{equation} converges in the same sense as above to the ratio 
	\begin{equation}
		R(s_1,\cdots, s_r, t_1\cdots, t_r)=\frac{\xi_{\infty}(s_1)\cdots \xi_{\infty}(s_r)}{\xi_{\infty}(t_1)\cdots \xi_{\infty}(t_r)}
	\end{equation} and we were able to compute $\mathbb E[R(s_1,\cdots, s_r, t_1\cdots, t_r)]$ away from the real line.
\end{itemize}

These results allowed us to make our own predictions for moments of ratios of the Riemann zeta function and of its logarithmic derivative. We also conjectured: 

\begin{conjecture}[The Stochastic Zeta Conjecture]
 If $\omega$ is a uniform random variable on $[0,1]$ and $T>0$, then the following convergence holds, as $T\to \infty$ 
\begin{equation}
	\left(\frac{\zeta\left(\frac{1}{2} + i \omega T-\frac{i 2 \pi s}{\log T}\right)}{\zeta\left(\frac{1}{2} + i \omega T\right)}; s\in\mathbb C  \right)\to \left(\xi_\infty(s); s\in\mathbb C \right)
\end{equation}
in law, uniformly in $s$ on every compact set. 
\end{conjecture}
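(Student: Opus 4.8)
The plan is to reduce the statement to the GUE conjecture for the zeros of $\zeta$ --- which, together with the Riemann hypothesis, we therefore assume --- by applying the general transfer principle of this paper, which turns the convergence in law of point processes on $\mathbb R$ into the convergence in law, for uniform convergence on compact sets, of the associated regularised zero products, once a uniform tail estimate is available. Write $h=\omega T$ for the random height and let $\vartheta$ be the Riemann--Siegel theta function, so that $Z(t)=e^{i\vartheta(t)}\zeta(\tfrac12+it)$ is real for real $t$, extends analytically, and (under RH) has only real zeros, namely the ordinates $\gamma$ of the nontrivial zeros of $\zeta$. Rescaling around $h$ by setting $x_\gamma:=\tfrac{\log T}{2\pi}(\gamma-h)$, the Riemann--von Mangoldt formula gives the point process $(x_\gamma)_\gamma$ asymptotic mean density $1$, and the GUE conjecture --- in the averaged form, obtained by randomising the height through $\omega$, that the locally rescaled zeros converge to the sine-kernel process --- states precisely that $(x_\gamma)_\gamma\to(y_k)_{k\in\mathbb Z}$ in law.

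Next one puts the ratio in the right shape. Factoring out the theta phase,
\begin{equation}
\frac{\zeta\!\left(\tfrac12+ih-\tfrac{2\pi i s}{\log T}\right)}{\zeta\!\left(\tfrac12+ih\right)}
=\exp\!\Big(i\vartheta(h)-i\vartheta\!\big(h-\tfrac{2\pi s}{\log T}\big)\Big)\,\frac{Z\!\big(h-\tfrac{2\pi s}{\log T}\big)}{Z(h)}.
\end{equation}
Because $\vartheta'(t)\sim\tfrac12\log\tfrac{t}{2\pi}$ and $\log h\sim\log T$ as $T\to\infty$ (note $\omega>0$ almost surely), the exponential prefactor converges, locally uniformly in $s$ and in probability, to $e^{i\pi s}$; this is where the exponential factor of $\xi_\infty$ comes from. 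The remaining factor $Z(h-\tfrac{2\pi s}{\log T})/Z(h)$ is a ratio of values of a real-analytic function whose zeros are exactly the $\gamma$'s, and locally --- up to a slowly varying factor absorbed into the limit by the transfer principle --- it is the symmetric principal-value product $\prod_{\gamma}\tfrac{x_\gamma-s}{x_\gamma}$ over the rescaled zeros near $h$. Granting the convergence $(x_\gamma)\to(y_k)$ and the interchange of limits, the whole expression tends to $e^{i\pi s}\prod_{k\in\mathbb Z}\big(1-\tfrac{s}{y_k}\big)=\xi_\infty(s)$. (At a large height the pole at $1$ and the trivial zeros are irrelevant, so only the nontrivial zeros matter; one could equivalently run the argument through the Hadamard product of the completed function $\Xi$.)

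The main obstacle is exactly the legitimacy of the interchange of limits: the product over the $\gamma$'s is only conditionally convergent, so convergence of the finite-dimensional marginals of the zero process is not enough, and one must verify the uniform-control hypothesis of the transfer principle --- a bound, uniform in $T$, on the probability that the contribution to the logarithm of the product coming from the zeros with $|x_\gamma|\ge M$ is large, together with tightness on compact sets. For $\zeta$ this is where classical mean-value information enters: the second-moment asymptotics $\tfrac1T\int_0^T|\zeta(\tfrac12+it)|^2\,dt\sim\log T$, the estimate $\tfrac1T\int_0^T S(t)^2\,dt\sim\tfrac{1}{2\pi^2}\log\log T$ for $S(t)=\tfrac1\pi\arg\zeta(\tfrac12+it)$, and moment or large-deviation bounds for $\log\zeta$ slightly to the right of the critical line jointly control how often $\zeta$ is anomalously large or small near the random height $h$, hence the tail of the product and the fluctuations of the theta prefactor. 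Feeding the GUE-type convergence of $(x_\gamma)$ and these estimates into the general theorem of this paper would then give the claim. The delicate feature throughout is that every estimate must be uniform over the randomised height, which is precisely why the randomisation by $\omega$ --- replacing pointwise statements by averaged ones accessible to mean-value theorems --- is essential.
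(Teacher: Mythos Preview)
Your overall strategy---assume RH, simplicity of the zeros and the GUE conjecture, then push the rescaled zero process through the transfer principle of the paper---is precisely the route the paper takes for the second implication of Theorem~\ref{stozetaeq}. Extracting the factor $e^{i\pi s}$ via the Riemann--Siegel theta function is a legitimate repackaging of what the paper does with the Hadamard product; the two are equivalent and the $\vartheta'$ asymptotic you quote is exactly what produces that factor.

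The substantive gap is in the variance input. To invoke Proposition~\ref{crit} you need, uniformly in $T$, a bound of the form $\mathbb{E}\big[(M_{X_T}(x)-M_{0,T}(x))^2\big]\le C(1+|x|^{2\alpha})$ with $\alpha<1$, i.e.\ a second-moment estimate for $N(t+h)-N(t)$ minus its mean with $t$ uniform on $[0,T]$ and $h=2\pi x/\log T$. The ingredients you list do not deliver this: the mean square of $|\zeta(\tfrac12+it)|$ says nothing about zero counting, and Selberg's $\tfrac1T\int_0^T S(t)^2\,dt\sim\tfrac1{2\pi^2}\log\log T$ only gives, via the triangle inequality, $\int_0^T(S(t+h)-S(t))^2\,dt=O(T\log\log T)$, which is uniform in $h$ but blows up with $T$ and therefore fails the hypothesis of Proposition~\ref{crit} for bounded $|x|$. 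What is actually required---and what the paper uses---is Fujii's estimate
\[
\int_T^{2T}\bigl(S(t+h)-S(t)\bigr)^2\,dt=O\bigl(T\log(2+|h|\log T)\bigr),
\]
whose dependence on $h$ yields the needed $O(\log(2+|x|))$ contribution after rescaling.

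A second point you leave to the phrase ``absorbed into the limit by the transfer principle'': because the density of $Z_{t,T}$ is not asymptotically $1$ away from the origin (it is $\log|t+\cdot|/\log T$), the paper first truncates to $X_T=Z_{t,T}\cap[-\log^2 T,\log^2 T]$ and shows, by a direct Riemann--von~Mangoldt computation, that the product over the discarded $\gamma$'s tends to $1$ uniformly on compacts. This step is separate from, and prior to, the application of Theorem~\ref{main}; it is not handled automatically by the transfer principle and needs to be carried out.
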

We then argued that this functional convergence is essentially equivalent to the GUE conjecture itself (this fact was also stated by Sodin in \cite{S18} with a detailed sketch of proof). In our current paper, we give a complete proof, expanding the number-theoretic arguments and showing how they combine with our probabilistic setting, that this conjecture implies the GUE conjecture and that conversely the GUE conjecture (under the Riemann hypothesis and the simplicity of the zeros of the Riemann zeta function) implies this conjecture:

\begin{theorem} \label{stozetaeq}
The following relations between the stochastic zeta conjecture and the GUE conjecture hold:
\begin{enumerate}
\item The stochastic zeta conjecture implies the GUE conjecture;
\item Assuming the Riemann Hypothesis and the simplicity of the zeros of the zeta function, the GUE conjecture implies the stochastic zeta conjecture.
\end{enumerate}

\end{theorem}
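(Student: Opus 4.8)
The plan is to prove the two implications by exploiting, in both directions, the fact that locally uniform convergence of holomorphic functions is, at the level of zeros, the same as vague convergence of the associated zero point measures. Concretely, the map $f\mapsto\sum_{f(z)=0}\delta_z$ (zeros counted with multiplicity) is continuous on the set $\{f\not\equiv0\}$ of holomorphic functions equipped with the topology of uniform convergence on compacts, by Hurwitz's theorem packaged through Rouch\'e's theorem; combined with the continuous mapping theorem this turns convergence in law of random holomorphic functions into convergence in law of their zero processes. Direction (1) is then essentially immediate, whereas direction (2) requires, in addition, the general convergence criterion developed in this paper to pass from point processes back to holomorphic functions.

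For (1), set $\mathcal X_T(s)=\zeta\bigl(\tfrac12+i\omega T-\tfrac{2\pi i s}{\log T}\bigr)/\zeta\bigl(\tfrac12+i\omega T\bigr)$, which is a.s.\ well defined and holomorphic on any fixed compact set once $T$ is large, and note that $s$ is a zero of $\mathcal X_T$ exactly when $\tfrac12+i\omega T-\tfrac{2\pi i s}{\log T}$ is a nontrivial zero $\beta+i\gamma$ of $\zeta$, i.e.\ $s=\tfrac{\log T}{2\pi}(\omega T-\gamma)+i\tfrac{\log T}{2\pi}(\beta-\tfrac12)$. Under the stochastic zeta conjecture $\mathcal X_T\to\xi_\infty$ in law, whence by the continuity recalled above the zero processes converge in law to the zero process of $\xi_\infty$, namely the sine-kernel process $(y_k)_{k\in\mathbb Z}$, which is supported on $\mathbb R$. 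Since this limit lives on $\mathbb R$, the off-line zeros contribute nothing in the limit (this also re-derives the absence, near a typical height, of zeros off the line; alternatively invoke classical zero-density estimates), and the surviving real points are the limits of the rescaled ordinates $\tfrac{\log T}{2\pi}(\gamma-\omega T)$, with $\omega$ uniform on $[0,1]$: this is the GUE conjecture in point-process form. If its correlation-function form is wanted, the uniform bound $N(T+h)-N(T)\ll(|h|+1)\log(T+2)$ supplies the uniform integrability that upgrades weak convergence to convergence of correlations. This direction is soft.

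For (2), assume the Riemann Hypothesis, simplicity of the zeros, and the GUE conjecture, so that the rescaled-ordinate process $\bigl(\tfrac{\log T}{2\pi}(\gamma-\omega T)\bigr)_\gamma$ converges in law to the sine-kernel process. I would factorise $\zeta$ on the critical line through the completed function: with $\xi(s)=\tfrac12 s(s-1)\pi^{-s/2}\Gamma(s/2)\zeta(s)$ and $\Xi(t)=\xi(\tfrac12+it)$, Hadamard's product over the (real, simple) zeros gives, for $t\in\mathbb R$,
\[
\zeta\bigl(\tfrac12+it\bigr)=\frac{\Xi(0)}{a(t)}\prod_{\gamma>0}\Bigl(1-\frac{t^2}{\gamma^2}\Bigr),\qquad a(t)=\tfrac12\bigl(\tfrac12+it\bigr)\bigl(it-\tfrac12\bigr)\pi^{-1/4-it/2}\Gamma\bigl(\tfrac14+\tfrac{it}{2}\bigr),
\]
the product being taken in the symmetric (principal-value) sense. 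Dividing the values at $t=\omega T$ and $t=\omega T-\tfrac{2\pi s}{\log T}$, the factors $\Xi(0)$ cancel, the zero products combine into a single principal-value product over all ordinates, and
\[
\mathcal X_T(s)=\frac{a(\omega T)}{a\bigl(\omega T-\tfrac{2\pi s}{\log T}\bigr)}\ \prod_{\gamma}\Bigl(1+\frac{s}{\tfrac{\log T}{2\pi}(\gamma-\omega T)}\Bigr).
\]
Stirling's formula and the estimate $\theta'(t)=\tfrac12\log\tfrac{t}{2\pi}+O(1/t)$ for the Riemann--Siegel theta function show that the archimedean prefactor tends to $e^{i\pi s}$, uniformly on compacts in $s$ and uniformly for $\omega$ bounded away from $0$ (after discarding an event of probability $\delta$, with $\delta\to0$). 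It then remains to prove that the principal-value product converges in law, uniformly on compacts, to $\prod_k(1+s/y_k)$, which by the symmetry of the sine-kernel process has the law of $\prod_k(1-s/y_k)$; multiplying by $e^{i\pi s}$ this is exactly $\xi_\infty(s)$. The finitely many factors with $\bigl|\tfrac{\log T}{2\pi}(\gamma-\omega T)\bigr|\le R$ converge jointly by the assumed point-process convergence, so the step reduces to controlling the tail product over the remaining zeros, uniformly in $T$ and with probability close to $1$ over $\omega$, as $R\to\infty$. This is exactly the situation covered by the general convergence criterion of this paper, whose hypotheses for the present point process I would verify from classical input: the Riemann--von Mangoldt formula with the trivial bound on $S(t)=\tfrac1\pi\arg\zeta(\tfrac12+it)$ for the counting of zeros, and the second-moment estimates for $S(t)$ and for $\log|\zeta(\tfrac12+it)|$ on the line for the smallness of the tail.

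The step I expect to be the main obstacle is precisely this last one in (2): bounding the tail product over the zeros with $\bigl|\tfrac{\log T}{2\pi}(\gamma-\omega T)\bigr|>R$ uniformly in $T$ and with high probability over $\omega$, so that it tends to $1$ as $R\to\infty$. It is here that the Riemann Hypothesis is genuinely used (so that one has a point process on $\mathbb R$ and the product runs over it), that the simplicity of the zeros matters (so that the limit is the \emph{simple} sine-kernel process, with no repeated factors), and that the quantitative analytic number theory---on the distribution of zeros in short intervals and of $\log\zeta$ near the critical line, together with the hypotheses of the paper's general theorem---has to be marshalled. Direction (1), by comparison, rests only on Hurwitz's theorem and the continuous mapping theorem.
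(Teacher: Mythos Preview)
Your outline is correct and matches the paper's approach closely. Direction (1) is exactly Theorem~\ref{converse} (the Hurwitz/Rouch\'e argument you describe), and for direction (2) the paper also starts from the Hadamard product, extracts the archimedean prefactor $e^{i\pi s}$, and then verifies the hypotheses of Proposition~\ref{crit} (hence Theorem~\ref{main}) for the rescaled zeros, with $M_{0,T}$ the Lebesgue measure on $[-\log^2 T,\log^2 T]\setminus[-1,1]$; the one specific analytic input you only gesture at is Fujii's second-moment bound $\int_T^{2T}(S(t+h)-S(t))^2\,dt=O(T\log(2+h\log T))$, which is precisely what controls $\mathbb{E}[(M_{X_T}(x)-M_{0,T}(x))^2]$ and hence the tail of the product---your mention of second-moment estimates for $\log|\zeta|$ is not needed, only those for $S(t)$.
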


Unlike the moments conjecture where both primes and random matrix statistics seem to combine or conspire, the scale and the normalization we use are only described with random matrix statistics (prime numbers do not appear anymore at the limit).

The approach we used in \cite{CNN17} to obtain the stochastic zeta function as a scaled limit of a ratio relied in particular on some coupling of the unitary groups $U(n)$, and some quantitative form of the strong (almost sure) convergence of the eigenangle point processes of these coupled random unitary matrices. We needed to control both local and global behavior of the eigenvalues. After this work, the question of whether our results could be extended to other random matrix ensembles was naturally raised, and more specifically the existence of scaling limits for the characteristic polynomials of other ensembles. In \cite{CHNNR19}, introducing the concept of amenable point processes, we solved the problem for the GUE ensemble (for which, to the best of our knowledge, no coupling resulting in a.s. convergence of the eigenvalues exists) and other compact groups relevant in the random matrix approach in number theory. Lambert and Paquette (\cite{LP20}) recently solved the problem for the edge of the spectrum, and Valk\'o and Vir\'ag in \cite{VV20} proved the existence of such a scaling limit for the circular beta ensembles C$\beta$E, for $\beta>0$: to each such an ensemble is associated a natural random holomorphic function in the scaling limit, with zeros distributed on the real line as the Sine$_\beta$ point process. The case $\beta=2$ corresponds to our $\xi_\infty$ function. They call this function the stochastic zeta function.  Moreover, they show that these functions enjoy remarkable properties, such as a Taylor series expansion, in which coefficients can be expressed thanks to independent two-sided Brownian motions and Cauchy random variables. Using their work on random operators having a Sine$_\beta$ spectrum, they provide a spectral interpretation of these stochastic zeta functions: they can be viewed as the $\det_2(Id-s\tau^{-1})$ for such operators (we shall come back to this later but they use  $\det_2$ because these inverse operators are not trace class but Hilbert-Schmidt, and $\det_2$ amounts to using a Weierstrass factor to make the product representation of an entire function of order $1$ to converge or to using the principal value point of view as we have done). Let us also note that in a recent article \cite{BU24} by Bufetov, the stochastic zeta function appears in the study of Gaussian multiplicative chaos for the sine process. These new works tend to show that these random holomorphic functions are rich and natural limiting objects appearing as a scaling limit for many different models.

In this work, we propose a new general framework based on the convergence of point processes to study the convergence of random holomorphic functions with real zeros. We shall view a sequence of random holomorphic functions as given by their zeros point processes on the line and give general conditions under which such a sequence converges. The main theorem which might seem abstract at first sight will in fact cover all existing convergence results for the various random matrix ensembles mentioned above and will provide us with new examples (in particular very general families of point processes on the unit circle as shown in Section~\ref{examples}). In particular cases which do not fit well in the random matrix interpretation (such as the Sine$_\beta$ for $\beta=0$, i.e. a Poisson point process) will naturally fit this setting. 

\vspace{0.5cm} 

 From now on, if $M$ is a Radon measure on the real line, we denote: 
$$M(x) := M ([0,x]) \mathds{1} _ {x \geq 0} - M((x,0)) \mathds{1}_{x < 0}.$$

The next Theorem is our main result when it comes to understanding the convergence of a sequence of random holomorphic functions given the convergence of their zeros. We shall work in the setting of a sequence of point processes $(X_n)$ which converges in law to a point process $X$ (recall that the topology on the space of point measures is that of vague convergence). Moreover, the points we consider shall satisfy the following property: the sum of their inverses diverges (but converges in the principal value sense), while the sum of their squared inverses converges. For each $X_n$ and $X$ we wish to construct a random holomorphic function whose zeros are the points of $X_n$ and $X$ respectively. These functions will be a.s.  entire functions of order $1$. We can naturally construct such entire functions either as a converging product in principal value or with the help of the Weierstrass factors (Proposition~\ref{weierstrass} shows how to go from one product to another). Once we are able to construct such sequences of holomorphic functions, we would like to be able to translate the convergence of the point processes (and hence the convergence of the zeros) into the convergence in law, for the topology of uniform convergence on compact sets,  of the corresponding entire functions. To achieve all this, we shall associate with the random measures $(X_n)$ and $X$ some deterministic Radon measures $M_{0,n}$ and $M_0$ which will smoothly approximate the expected number of points in compact intervals and which will allow for a deterministic exponential factor compensation so that the product over zeros converges almost surely. As we shall need to control the tails of our point processes, we will be interested in controlling the convergence on all compact intervals simultaneously. We also need these uniform estimates for compact intervals to be also uniform in $n$ in order to have the convergence of the corresponding holomorphic functions. As our point processes $X_n$ converge to $X$, we will also require that $M_{0,n}$ converge vaguely to $M_0$. For technical reasons, we ask these measures to assign measure zero to a fixed neighborhood of $0$ (which we will take, for convenience, to $[-1,1]$). Since our estimates are uniform on compact intervals, this choice will not affect our global convergence result, since this choice will only affect the various random holomorphic functions through the appearance of an extra factor $e^{c s}$, with an explicit constant $c$.

Last, note that in the next theorem we have an extra condition involving some function $\varphi$
tending to infinity at infinity arbitrarily slowly.  
The appearance of $\varphi$ may not seem natural, but it is there to ensure some uniform integrability condition. In all the examples presented in the article, the assumptions of the theorem should be checked, and the difficulty of this task does not increase by the presence of this function $\varphi$.

\begin{theorem} \label{main} 
Let $(X_n)_{n \in \mathbb{N}}$ and $X$ be simple point processes on the real line, locally finite, i.e. with finitely many points on compact sets. Let $M_{X_n}$ and $M_X$ be the corresponding random Radon measures, i.e. 
$$M_{X_n} := \sum_{ \rho \in X_n} \delta_{\rho}, \;  M_{X} := \sum_{ \rho \in X} \delta_{\rho},$$
where $\delta_{\rho}$ denotes the Dirac measure  at $\rho$. 
We assume that $X_n$ converges in law to $X$, in the sense that for all functions $g$ from $\mathbb{R}$ to $\mathbb{R}_+$,  continuous with compact support, 
$$\int_\mathbb{R} g \, dM_{X_n} \underset{n \rightarrow \infty}{\longrightarrow} \int_{\mathbb{R}} g \, dM_X$$
in distribution. Moreover, we suppose that there exist deterministic Radon measures $M_{0,n}$ and $M_0$ on the real line satisfying the following assumptions, for some 
 nondecreasing function $\varphi$ from $[0, \infty)$ to $(0, \infty)$, tending to infinity at infinity: 
\begin{itemize}
 \item We have $$\int_{\mathbb{R}} \frac{|M_0 (x)|}{|x|^3} dx < \infty,$$
and almost surely, 
 $$\int_{\mathbb{R}} \frac{|M_0 (x) - M_X(x) |}{x^2} dx < \infty, \; \frac{|M_0 (x) - M_X(x) |}{|x|} \underset{|x| \rightarrow \infty}{\longrightarrow} 0$$
\item When $n$ goes to infinity, the measure $M_{0,n}$ converges vaguely to $M_0$, i.e. for all 
functions $g$, continuous with compact support, 
$$\int_\mathbb{R} g dM_{0,n} \underset{n \rightarrow \infty}{\longrightarrow} \int_{\mathbb{R}} g dM_0.$$
\item The sequence
$$\left(\int_{\mathbb{R}} \frac{|M_{0,n}(x)| \varphi(|x|)}{|x|^3} dx \right)_{n \in \mathbb{N}},
$$ is bounded and the families of variables $$
\left(\int_{\mathbb{R}} \frac{|M_{0,n} (x) - M_{X_n}(x) | \varphi(|x|)}{x^2} dx \right)_{n \in \mathbb{N}}$$
and $$\left(\sup_{x \in \mathbb{R}} \frac{|M_{0,n} (x) - M_{X_n}(x) |\varphi(|x|)}{|x|} \right)_{n \in \mathbb{N}}$$
are tight.
\end{itemize}
Then, for all $n \in \mathbb{N}$, the random function 
$$s  \mapsto \lim_{A \rightarrow \infty} 
\exp \left( s \int_{[-A,A]} \frac{d M_{0,n}(\rho)}{\rho} \right)\prod_{\rho \in X_n\cap [-A,A]} \left(1- \frac{s}{\rho} \right)$$
is almost surely well-defined, entire, its zeros are simple and given by the point process $X_n$. Moreover, for $n \rightarrow \infty$, as a random element of  
the space of continuous functions from $\mathbb{C}$ to $\mathbb{C}$ endowed with the topology of the uniform convergence on compact sets,  it converges in distribution
to the
 function
 $$s  \mapsto \lim_{A \rightarrow \infty} 
\exp \left( s \int_{[-A,A]} \frac{d M_{0}(\rho)}{\rho} \right)\prod_{\rho \in X \cap [-A,A]} \left(1- \frac{s}{\rho} \right),$$
 which is almost surely well-defined, entire, and whose zeros are given by $X$.
\end{theorem}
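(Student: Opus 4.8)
The plan is to first rewrite each of the functions in a genus-one Hadamard product normal form, in which well-definedness, entireness and the description of the zeros become transparent, and then to prove the convergence in law by truncating the zero set at a finite radius, letting $n\to\infty$ with the radius fixed, and controlling the error uniformly in $n$ as the radius grows --- the last step being where the auxiliary function $\varphi$ is used.

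\emph{Normal form.} Fix $n$. Since a tight family of nonnegative random variables consists of almost surely finite variables, and since $\varphi$ is nondecreasing with $\varphi(0)>0$ and $\varphi(|x|)\to\infty$, the $\varphi$-weighted hypotheses imply that almost surely $\int_{\mathbb R}|M_{0,n}(x)||x|^{-3}dx<\infty$, $\int_{\mathbb R}|M_{0,n}(x)-M_{X_n}(x)|x^{-2}dx<\infty$ and $|M_{0,n}(x)-M_{X_n}(x)|/|x|\to 0$; the analogous almost sure statements for $M_0,M_X$ are among the hypotheses. The $|x|^{-3}$-integrability of $x\mapsto M_{0,n}([0,x])$ forces this function to be $o(x^2)$ near $0$ (using its monotonicity), so $\int_{[-A,A]}\rho^{-1}dM_{0,n}(\rho)$ is an honestly convergent integral; an integration by parts then gives $\sum_{\rho\in X_n}\rho^{-2}<\infty$ and shows that $c_n:=\lim_{A\to\infty}\int_{[-A,A]}\rho^{-1}d(M_{0,n}-M_{X_n})(\rho)$ exists and is finite. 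Inserting the Weierstrass convergence factors $e^{s/\rho}$ shows that the limit defining the $n$-th random function exists locally uniformly in $s$ and equals
$$F_n(s)=e^{c_n s}\prod_{\rho\in X_n}\left(1-\frac{s}{\rho}\right)e^{s/\rho},$$
an entire function whose only zeros are the (distinct) points of $X_n$, all simple; the same computation with $(M_0,M_X)$ gives the limiting function $F(s)=e^{cs}\prod_{\rho\in X}(1-s/\rho)e^{s/\rho}$, where $c=\lim_{A\to\infty}\int_{[-A,A]}\rho^{-1}d(M_0-M_X)(\rho)$.

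\emph{Truncation and convergence with the radius fixed.} All but countably many $R>0$ satisfy $\mathbb P(\{-R,R\}\cap X\neq\emptyset)=0$; fix such an $R$ and put $F_n^{(R)}(s):=\exp\!\big(s\int_{[-R,R]}\rho^{-1}dM_{0,n}(\rho)\big)\prod_{\rho\in X_n\cap[-R,R]}(1-s/\rho)$, with $F^{(R)}$ defined analogously. Then $F_n^{(R)}\to F^{(R)}$ in law as $n\to\infty$: the deterministic prefactor converges because, after integrating by parts (legitimate since the bound on $\int_0^R|M_{0,n}(x)|x^{-3}dx$ yields, via monotonicity, a dominating function for $x\mapsto x^{-2}M_{0,n}([0,x])$ on $(0,R]$ uniform in $n$), vague convergence $M_{0,n}\to M_0$ gives $\int_{[-R,R]}\rho^{-1}dM_{0,n}\to\int_{[-R,R]}\rho^{-1}dM_0$; and the finite product converges in law because, by Skorokhod's representation theorem, $X_n\to X$ may be realized as almost sure vague convergence of point measures, whence the finitely many points of $X_n$ in $[-R,R]$ converge almost surely and so do the finite products, locally uniformly in $s$. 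Moreover $F^{(R)}\to F$ almost surely, hence in law, as $R\to\infty$, by the normal form above.

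\emph{Tightness, the uniform tail bound, and conclusion.} It remains to prove that $(F_n)_n$ is tight in $C(\mathbb C,\mathbb C)$ and that $\lim_{R\to\infty}\limsup_{n\to\infty}\mathbb P\big(\sup_{|s|\le K}|F_n(s)-F_n^{(R)}(s)|>\delta\big)=0$ for all $K,\delta>0$; the standard approximation theorem for weak convergence then combines these with the previous paragraph to give $F_n\to F$. For tightness it suffices, by Montel's theorem and the precompactness criterion in $C(\mathbb C,\mathbb C)$, to bound $\sup_{|s|\le m}|F_n(s)|$ in probability uniformly in $n$ for each $m$; splitting the product at $|\rho|=2m$ bounds it by $\exp\!\big(|c_n|m+2m\sum_{\rho\in X_n,|\rho|\le 2m}|\rho|^{-1}+m^2\sum_{\rho\in X_n,|\rho|>2m}\rho^{-2}\big)$, and integration by parts expresses each of $|c_n|$, $\sum_{|\rho|\le 2m}|\rho|^{-1}$, $\sum_{|\rho|>2m}\rho^{-2}$ in terms of $\int|M_{0,n}||x|^{-3}$, $\int|M_{0,n}-M_{X_n}|x^{-2}$, $\sup_x|M_{0,n}-M_{X_n}|/|x|$ and boundary values of the counting function --- all bounded in probability uniformly in $n$ by the hypotheses (the $M_{0,n}$-terms deterministically), these same estimates also keeping the points of $X_n$ away from $0$ in probability. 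For the tail bound, $F_n/F_n^{(R)}=\exp\!\big(s\int_{|\rho|>R}\rho^{-1}d(M_{0,n}-M_{X_n})(\rho)\big)\prod_{\rho\in X_n,|\rho|>R}(1-s/\rho)e^{s/\rho}$, so for $|s|\le K<R/2$ the modulus of its logarithm is at most $K\big|\int_{|\rho|>R}\rho^{-1}d(M_{0,n}-M_{X_n})\big|+CK^2\sum_{\rho\in X_n,|\rho|>R}\rho^{-2}$; since $\varphi$ is nondecreasing, each of these tails is, after integration by parts, at most $\varphi(R)^{-1}$ times one of $\int|M_{0,n}-M_{X_n}|\varphi(|x|)x^{-2}dx$, $\sup_x|M_{0,n}-M_{X_n}|\varphi(|x|)/|x|$, $\int|M_{0,n}|\varphi(|x|)|x|^{-3}dx$, which are bounded in probability uniformly in $n$ by hypothesis; hence on an event of probability at least $1-\varepsilon$ uniformly in $n$ this logarithm is at most $\eta(R)$ with $\eta(R)\to0$, so on that event $|F_n^{(R)}|\le 2|F_n|$ and $|F_n-F_n^{(R)}|\le 4\eta(R)|F_n|$, and intersecting with $\{\sup_{|s|\le K}|F_n|\le L\}$ (of probability $\ge 1-\varepsilon$ uniformly in $n$ by the tightness just proved) makes $\sup_{|s|\le K}|F_n-F_n^{(R)}|<\delta$ for $R$ large, uniformly in $n$; letting $\varepsilon\to0$ gives the claim.

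\emph{Expected main difficulty.} The delicate point is precisely the uniformity in $n$ in the last paragraph: one must show that the \emph{single} function $\varphi$ simultaneously dominates, uniformly over $n$, the effect of the faraway zeros (through $\int|M_{0,n}-M_{X_n}|\varphi\,x^{-2}$), the rate of decay of $|M_{0,n}-M_{X_n}|/|x|$, and the global growth of $M_{0,n}$, while no regularity of the processes near the origin is assumed --- so the control needed there (that the points of $X_n$ cannot approach $0$ too fast, which makes $\int_{[-A,A]}\rho^{-1}dM_{0,n}$ well defined and all the small-$|\rho|$ sums tight) has to be extracted from the $|x|^{-3}$-integrability hypothesis on $M_{0,n}([0,x])$.
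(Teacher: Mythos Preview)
Your proof is correct and follows the same architecture as the paper's: rewrite in Weierstrass normal form (the paper isolates this as its Proposition~\ref{weierstrass}), truncate at a finite radius, let $n\to\infty$ with the radius fixed, and control the tail uniformly in $n$ via $\varphi$, assembling the pieces through an approximation lemma (the paper's Proposition~\ref{fnA}). The only differences are tactical --- you handle the fixed-radius limit by a Skorokhod coupling of the point processes, whereas the paper takes logarithms and reduces to convergence in law of linear statistics $\sum_{\rho\in X_n\cap[-A',A']} g(\rho)$, and you prove tightness of $(F_n)$ directly from the Hadamard form rather than deriving it from the truncations inside the approximation lemma; one small cleanup is that your truncation radius $R$ should also be chosen to avoid the (countably many) atoms of $M_0$, so that $M_{0,n}(\pm R)\to M_0(\pm R)$ in your integration-by-parts step.
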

\begin{remark}

The convergence in law, for the topology of uniform convergence on compact sets, is equivalent to the convergence of the finite-dimensional marginals, together with the tightness
of the maximal modulus of the function on any given compact set, as soon as the functions that are involved are holomorphic. This can be proven similarly as in \cite{CHNNR19}, using the notion of a compact-equicontinuous family of functions in probability, and the
fact that the supremum of the derivative of a holomorphic function on a given compact set is dominated by the supremum of the function itself on a slightly larger compact set. 

Moreover, the convergence involved in the theorem implies the convergence in law of all the ratios of products of values of the function taken at different points, as soon as all the
denominators are almost surely non-zero under the limiting probability distribution, which for example occurs when we take values of the functions outside the real line. If we have suitable properties of domination, we can also deduce the convergence of moments of ratios.
\end{remark}

A significant example can be constructed from the eigenvalues of the GUE. 
If the GUE of order $n$ is normalized in such a way that its eigenvalues $\lambda^{(n)}_1, \dots, \lambda^{(n)}_n$ have joint density proportional to 
$$e^{- \sum_{1 \leq j \leq n} (\lambda^{(n)}_j)^2/2} \, \prod_{1 \leq j < k \leq n} (\lambda^{(n)}_j - \lambda^{(n)}_k)^{2},$$
then 
the empirical measure associated to the points $\lambda^{(n)}_1/ \sqrt{n}, \lambda^{(n)}_2/\sqrt{n}, \dots, \lambda^{(n)}_n/\sqrt{n}$ tends in law to the semi-circular distribution, 
which is supported by the interval $[-2, 2]$. 
We now rescale the GUE around a point in the bulk of the spectrum: we fix $E \in (-2,2) $, and we 
define the point process $X_n$ by 
$$X_n := \left\{ \frac{ (\lambda^{(n)}_j - E \sqrt{n}) \sqrt{n (4 - E^2)}}{2 \pi}, 1 \leq j \leq n\right\}. $$
This point process converges in law to $X$, where $X$ is a determinantal sine-kernel point process. We choose the measures $M_{0,n}$ and $M_0$ as follows: 
 \begin{itemize}
 \item The measure $M_{0,n}$ is the restriction to $\mathbb{R} \backslash [-1,1]$ of the suitably rescaled semi-circular distribution: more precisely, $M_{0,n}$ has 
 a density
 $$x \mapsto (4 - E^2)^{-1/2} \sqrt{\max(0,4 - (E  + 2 \pi x n^{-1} (4 - E^2)^{-1/2})^2)} \mathds{1}_{x \not\in [-1,1]}.$$
 \item The measure $M_0$ is the Lebesgue measure on $\mathbb{R} \backslash [-1,1]$. 
 \end{itemize}
 These measures are chosen in such a way that they approximate the point measures corresponding to the point processes $X_n$ and $X$. 
 We will later prove that with this choice, the assumptions of Theorem \ref{main} are satisfied, and that the conclusion of this theorem 
 implies, after elementary computation, that
 $$ \frac{Z_{n}(E \sqrt{n} + 2 \pi s (n(4 - E^2))^{-1/2})}{Z_n (E \sqrt{n})}
\underset{n \rightarrow \infty}{\longrightarrow} \underset{A \rightarrow \infty}{\lim} e^{\frac{s \pi E}{\sqrt{4 - E^2}}} \, \prod_{\rho \in X \cap [-A,A]} \left(1 - \frac{s} {\rho} \right)$$
in distribution, for the topology of the uniform convergence on compact sets, where 
$$Z_n(z) := \prod_{j=1}^n (z - \lambda^{(n)}_j)$$
is the characteristic polynomial of a GUE matrix of order $n$. 
 This result has been proven in \cite{CHNNR19} in a different way.

In all the examples considered in this article, the assumptions of Theorem \ref{main} are checked by using the following result: 
\begin{proposition} \label{crit}
With the notation of Theorem \ref{main}, in the case where $(X_n)_{n \in \mathbb{N}}$ and $X$ have almost surely no point at zero, the assumptions on $M_{0,n}$ and $M_0$ are satisfied as soon as we have the following properties, for some $C > 0$, $\nu \in (0,1)$, $\alpha \in (0,1)$ such that $\nu + \alpha < 1$: 
\begin{itemize}
\item $M_{0,n}$ converges vaguely to $M_0$ when $n$ goes to infinity. 
\item For all $n \in \mathbb{N}$, $M_{0,n}( [-1,1]) = 0$.
\item For all $n \in \mathbb{N}$, $x, y \in \mathbb{R}$,
 $$|M_{0,n} ([x,y])|  \leq C \, (1 + |y-x|)\, (|x|^{\nu} + |y|^{\nu}),$$
  $$|M_{0} ([x,y])|  \leq C \, (1 + |y-x|)\, (|x|^{\nu} + |y|^{\nu}).$$
 \item For all $n \in \mathbb{N}$, $x \in \mathbb{R}$,
 $$| M_{0,n} (x) - \mathbb{E} [ M_{X_n}(x) ]| \leq C (1 + |x|^{\alpha}),$$
 $$ | M_{0} (x) - \mathbb{E} [ M_{X}(x) ]| \leq C (1 + |x|^{\alpha}).$$
\item For all $n \in \mathbb{N}$, $x \in \mathbb{R}$,
$$\operatorname{Var}(M_{X_n}(x)) \leq C (1 +  |x|^{2\alpha}),$$
$$\operatorname{Var}(M_{X}(x)) \leq C (1 +  |x|^{2 \alpha}).$$
\end{itemize}
\end{proposition}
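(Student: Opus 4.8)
The plan is to verify, for a suitable weight $\varphi$, the three bulleted assumptions on $M_{0,n}$ and $M_0$ in Theorem~\ref{main}; I would take $\varphi(t)=(1+t)^{\beta}$ with $\beta>0$ small, fixed at the end to satisfy $\nu+\beta<1$, $\alpha+\beta<1$, and $2\alpha+\nu+\eta+2\beta<2$ for some auxiliary $\eta\ge\beta$. Such $\beta,\eta$ exist because $\alpha+\nu<1$ together with $\alpha<1$ force $2\alpha+\nu<2$ strictly. The easy part concerns the ``$1/|x|^3$'' terms: since each $M_{0,n}$ vanishes on $[-1,1]$ and $M_{0,n}\to M_0$ vaguely, $M_0$ vanishes on $(-1,1)$; and applying the interval bound to the interval between $0$ and $x$ gives $|M_{0,n}(x)|,|M_0(x)|\le 2C|x|^{1+\nu}$ for $|x|\ge1$, uniformly in $n$. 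Hence $|M_0(x)|/|x|^3$ and $|M_{0,n}(x)|\varphi(|x|)/|x|^3$ are supported in $\{|x|>1\}$, where they are $\lesssim|x|^{\nu-2}$ and $\lesssim|x|^{\nu+\beta-2}$ respectively, both integrable; this gives $\int|M_0(x)||x|^{-3}dx<\infty$ and the boundedness in $n$ of $\int|M_{0,n}(x)|\varphi(|x|)|x|^{-3}dx$. The vague convergence $M_{0,n}\to M_0$ is itself a hypothesis, so it remains to treat the terms built from $M_{0,\bullet}-M_{X_\bullet}$.

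For those I would first record the uniform second-moment estimate: writing $M_{0,n}(x)-M_{X_n}(x)=(M_{0,n}(x)-\mathbb{E}[M_{X_n}(x)])-(M_{X_n}(x)-\mathbb{E}[M_{X_n}(x)])$ and using the bias and variance hypotheses,
$$\mathbb{E}\big[(M_{0,n}(x)-M_{X_n}(x))^2\big]\le 2C^2(1+|x|^\alpha)^2+2C(1+|x|^{2\alpha})\lesssim 1+|x|^{2\alpha},$$
uniformly in $n$, and likewise for $M_0-M_X$; so $\mathbb{E}|M_{0,n}(x)-M_{X_n}(x)|\lesssim 1+|x|^\alpha$. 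On $\{|x|\ge1\}$, Tonelli then gives $\sup_n\mathbb{E}\int_{|x|\ge1}|M_{0,n}(x)-M_{X_n}(x)|\varphi(|x|)x^{-2}dx\lesssim\int_{|x|\ge1}|x|^{\alpha+\beta-2}dx<\infty$ since $\alpha+\beta<1$, so this piece of the integral is a tight family by Markov, and the same bound for $X$ gives a.s.\ finiteness of $\int_{|x|\ge1}|M_0(x)-M_X(x)|x^{-2}dx$. On $\{|x|\le1\}$ we have $M_{0,n}\equiv0$, so the integrand is $\varphi(|x|)M_{X_n}(x)x^{-2}\le\varphi(1)M_{X_n}(x)x^{-2}$, and $\int_{-1}^{1}M_{X_n}(x)x^{-2}dx=\sum_{\rho\in X_n,\,0<|\rho|\le1}(|\rho|^{-1}-1)$ is a.s.\ finite because $X_n$ is locally finite with no atom at $0$. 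Its tightness uniformly in $n$ follows from a closest-point bound: with $R_n$ the distance from $0$ to the nearest point of $X_n$, the sum is at most $R_n^{-1}M_{X_n}([-1,1])$, whence for $K>0$
$$\mathbb{P}\Big(\textstyle\sum_{\rho}(|\rho|^{-1}-1)>K\Big)\le\mathbb{P}\big(R_n<K^{-1/2}\big)+\mathbb{P}\big(M_{X_n}([-1,1])>K^{1/2}\big)\le\mathbb{E}\big[M_{X_n}([-K^{-1/2},K^{-1/2}])\big]+\frac{4C}{\sqrt K}.$$
The crucial point is $\sup_n\mathbb{E}[M_{X_n}([-\delta,\delta])]\to0$ as $\delta\to0$: the second-moment bound makes $(\int g\,dM_{X_n})_n$ bounded in $L^2$, hence uniformly integrable, for continuous compactly supported $g$, so the weak convergence $X_n\to X$ yields $\limsup_n\mathbb{E}[M_{X_n}([-\delta,\delta])]\le\mathbb{E}[M_X([-2\delta,2\delta])]$, which tends to $0$ since $X$ has no atom at $0$, and the finitely many small $n$ are handled individually. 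The same bound controls $\sup_{0<|x|\le1}|M_{X_n}(x)|/|x|\le R_n^{-1}M_{X_n}([-1,1])$, so the ``$|x|\le1$'' part of the supremum family $\big(\sup_{x}|M_{0,n}(x)-M_{X_n}(x)|\varphi(|x|)/|x|\big)_n$ is tight as well.

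What remains — and this is the heart of the matter — is the a.s.\ decay $|M_0(x)-M_X(x)|/|x|\to0$ and the tightness of $\big(\sup_{|x|\ge1}|M_{0,n}(x)-M_{X_n}(x)|\varphi(|x|)/|x|\big)_n$. After peeling off the deterministic bias $O(|x|^\alpha)=o(|x|)$, it suffices to control the centered quantities $V_n(x):=M_{X_n}(x)-\mathbb{E}[M_{X_n}(x)]$ (and its limiting analogue $V$). I would exploit that $x\mapsto M_{X_n}(x)$ and $x\mapsto\mathbb{E}[M_{X_n}(x)]$ are nondecreasing and that on an interval $(a,a+h]$ with $a\asymp2^m$ and $h:=2^{m(1-\nu-\eta)}$ the \emph{deterministic} increment satisfies $\mathbb{E}[M_{X_n}((a,a+h])]\le|M_{0,n}([a,a+h])|+2C(1+(a+h)^\alpha)\lesssim 2^{m\alpha}+h\,2^{m\nu}\lesssim 2^{m(1-\beta)}$ (using $\beta\le\eta$ and $\alpha<1-\beta$). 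Partition each dyadic block $[2^m,2^{m+1}]$ into $\asymp2^{m(\nu+\eta)}$ such intervals; at each net point $a$, Chebyshev with $\operatorname{Var}(V_n(a))=\operatorname{Var}(M_{X_n}(a))\lesssim2^{2m\alpha}$ gives $\mathbb{P}(|V_n(a)|>\tfrac K2 2^{m(1-\beta)})\lesssim 2^{2m\alpha-2m(1-\beta)}K^{-2}$, while monotonicity plus the increment bound above lets one replace the supremum over $x\in[2^m,2^{m+1}]$ by the maximum over net points at the cost of a factor $2$, so a union bound yields
$$\mathbb{P}\Big(\sup_{x\in[2^m,2^{m+1}]}|V_n(x)|>K\,2^{m(1-\beta)}\Big)\lesssim\frac{2^{m(\nu+\eta)}\,2^{2m\alpha}}{K^2\,2^{2m(1-\beta)}}=\frac{2^{m(2\alpha+\nu+\eta+2\beta-2)}}{K^2}.$$
Since $2\alpha+\nu+\eta+2\beta<2$, summing over $m\ge0$ gives $\mathbb{P}\big(\sup_{|x|\ge1}|V_n(x)|/|x|^{1-\beta}>2K\big)\lesssim K^{-2}$, uniformly in $n$, which is precisely the claimed tightness since $|x|/\varphi(|x|)\asymp|x|^{1-\beta}$; the negative half-line is symmetric. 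For the a.s.\ statement one runs the same estimate with $K$ replaced by a fixed $\varepsilon>0$ (the series still converges) and applies Borel--Cantelli in $m$, obtaining that a.s.\ $|V(x)|\le\varepsilon|x|$ for all large $|x|$, hence $V(x)/x\to0$ a.s.\ and therefore $(M_0(x)-M_X(x))/|x|\to0$ a.s.

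The only genuine obstacle is this last step: turning the second-moment hypotheses into pathwise control. Everything hinges on the balance between the chaining scale $h=|x|^{1-\nu-\eta}$ — forced by the permitted growth $|x|^{1+\nu}$ of $M_{0,\bullet}$ on intervals, which is what makes coarser nets useless — and the union-bound cost $|x|^{\nu+\eta}$, which must be outweighed by the Chebyshev gain $|x|^{2\alpha-2}$; the budget $2\alpha+\nu<2$ needed for this to close is exactly where the hypothesis $\nu+\alpha<1$ enters.
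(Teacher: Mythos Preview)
Your argument is correct, and in its overall architecture it matches the paper's proof: the $|x|^{-3}$ integrals are handled directly from $M_{0,n}([-1,1])=0$ and the interval bound; the $|x|\le 1$ contributions to the remaining two families are controlled via the nearest-point distance $R_n$ and the local count $M_{X_n}([-1,1])$; the integral on $\{|x|\ge 1\}$ is bounded in $L^1$ via $\mathbb{E}|M_{0,n}(x)-M_{X_n}(x)|\lesssim 1+|x|^{\alpha}$; and the a.s.\ decay for $X$ is obtained at the end by specializing the uniform-in-$n$ estimate to the constant sequence.

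The one genuine methodological difference is in the supremum on $\{|x|\ge 1\}$. You run a dyadic chaining: partition $[2^m,2^{m+1}]$ into $\asymp 2^{m(\nu+\eta)}$ cells, use monotonicity of $M_{X_n}$ and $\mathbb{E}[M_{X_n}]$ together with the increment bound to pass to net points, and close with Chebyshev plus a union bound, arriving at the summability condition $2\alpha+\nu+\eta+2\beta<2$. The paper instead uses a \emph{polynomial} grid $\{p^{K}:p\ge 1\}$ and, crucially, bounds the supremum over $x>1$ by the \emph{sum} over $p$ of the normalized endpoint values
\[
\sum_{p\ge 1}\frac{\varphi((p+1)^{K})}{p^{K}}\Big(|M_{0,n}(p^{K})-M_{X_n}(p^{K})|+|M_{0,n}((p+1)^{K})-M_{X_n}((p+1)^{K})|\Big)
\]
plus a deterministic correction. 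Taking the expectation of this sum directly (no Chebyshev, no union bound), the expectation is finite iff $K(\alpha-1)<-1$, while the deterministic correction is finite iff $K\nu<1$; the compatibility $1/(1-\alpha)<K<1/\nu$ is exactly $\alpha+\nu<1$. This sum-instead-of-max trick is slicker (one free parameter $K$ rather than two), uses only first moments at the grid points, and makes the role of the hypothesis $\alpha+\nu<1$ completely transparent. Your approach, on the other hand, is the more standard chaining template, and has the mild advantage that the constraint it actually needs, $2\alpha+\nu<2$, is formally weaker than $\alpha+\nu<1$ (though the latter is what the proposition assumes).

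One small wording issue: in your Borel--Cantelli step you write ``replace $K$ by a fixed $\varepsilon>0$'', but the monotonicity reduction to net points costs an additive $C_0\,2^{m(1-\beta)}$, so the threshold cannot be taken below $2C_0\,2^{m(1-\beta)}$. What you really want is either to keep $K$ fixed (any $K\ge 2C_0$) and use $|x|^{1-\beta}=o(|x|)$, or---as the paper does---simply observe that the tightness bound applied to the constant sequence $X$ shows $\sup_{x}|M_0(x)-M_X(x)|\varphi(|x|)/|x|<\infty$ a.s., which immediately gives the decay since $\varphi(|x|)\to\infty$. Either fix is immediate and does not affect the validity of your argument.
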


\begin{remark}
 If the point processes $(X_n)_{n \in \mathbb{N}}$ are uniformly product-amenable in the sense of \cite{CHNNR19}, and if we take $M_{0,n}(x) = \mathbb{E} [ M_{X_n}(x)]$, 
then the conditions of Proposition \ref{crit} relative to $X_n$ are satisfied, for $\nu$ small and $\alpha$ sufficiently close to $1$. Moreover, the assumptions of Proposition \ref{crit} imply 
that 
$$\int_{\mathbb{R}} \frac{d M_{X_n}(x)}{ 1+ x^2} < \infty, \; \int_{\mathbb{R}} \frac{d M_{X}(x)}{ 1+ x^2} < \infty$$
almost surely (see proof of Proposition~\ref{weierstrass}). Because of the above integrability conditions, one could imagine that there exist random operators $\{\tau_n\}_{n\geq 1}$ and $\tau_\infty$, with spectrum the points of $\{X_n\}$ and $X$ respectively, such that the operators $\tau_n$ converge to $\tau$ in the right norm (this is in the spirit of the work \cite{VV20}). But this is beyond the scope of this paper, and we hope to be able to address this problem in a future work.

\end{remark}

A converse of Theorem \ref{main}, less difficult to prove and using weaker assumptions, can be stated as follows:
\begin{theorem} \label{converse}
Let $(f_n)_{n \in \mathbb{N}}$ be a family of random holomorphic functions on an open set $D$ of $\mathbb{C}$. We assume that $(f_n)_{n \in \mathbb{N}}$, 
as a sequence of random elements of the space $\mathcal{C}(D, \mathbb{C})$ endowed with the topology of uniform convergence on compact subsets of $D$, 
 converges in distribution to a random function $f$, which is almost surely holomorphic in $D$. 
We assume that there is almost surely no nonempty open ball in $D$ where $f$, or $f_n$ for some $n \in \mathbb{N}$, is identically zero, implying that the
zeros of $f_n$ and $f$ are isolated. Then, the point process of the zeros of $f_n$ converges in law to the point process of the 
zeros of $f$, in the following sense: for all continuous function $g$ from $D$ to $\mathbb{R}_+$, with compact support, 
$$\sum_{\rho \in D, f_n(\rho) = 0} \mu_{f_n} (\rho) g(\rho) \underset{n \rightarrow \infty}{\longrightarrow} \sum_{\rho \in D, f(\rho) = 0} \mu_{f} (\rho)  g(\rho)$$
in distribution, where $\mu_{f_n}(\rho)$ or $\mu_f(\rho)$ denote the multiplicities of $\rho$ as a zero of $f_n$ or $f$. 

\end{theorem}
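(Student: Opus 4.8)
The plan is to upgrade the convergence in distribution to almost sure convergence along a coupling, and then to count zeros with Rouch\'e's theorem. Since $\mathcal{C}(D,\mathbb{C})$ equipped with the topology of uniform convergence on compact subsets of $D$ is a Polish space, I would first apply Skorokhod's representation theorem to realise, on one probability space, copies of the $f_n$ and of $f$ with the same laws and with $f_n \to f$ almost surely in this topology. One must then observe that for each continuous $g \colon D \to \mathbb{R}_+$ with compact support the map $h \mapsto \sum_{h(\rho)=0}\mu_h(\rho)\,g(\rho)$ is a measurable functional on the set of holomorphic functions with isolated zeros — it can be written as a countable combination of winding--number integrals $\frac{1}{2\pi i}\oint h'/h$ over a fixed countable family of small circles — so that convergence in distribution of this functional depends only on the laws of the $f_n$ and of $f$. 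Hence it suffices to prove the stated convergence almost surely on the coupling space, on the full--measure event where $f_n \to f$ locally uniformly and none of $f$, $f_n$ vanishes identically on a non-empty open ball; on that event all the zeros involved are isolated, and there are finitely many of them on each compact subset of $D$.

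Working on that event with $g$ of compact support $K \subset D$, I would first note that $g$ vanishes on $\partial K$ (by continuity, any point where $g>0$ is interior to $K$), and fix a compact $K' \subset D$ with $K$ in its interior. Let $\rho_1,\dots,\rho_m$ be the finitely many zeros of $f$ in $K'$ (the case $m=0$ being handled the same way, with empty sums). Given $\eta>0$, choose $r>0$ so small that the closed disks $\overline{D(\rho_j,r)}$ are pairwise disjoint, contained in the interior of $K'$, and contain no zero of $f$ other than $\rho_j$, and moreover so that $g$ oscillates by less than $\eta$ over any set of diameter $r$ (uniform continuity of $g$). Setting $\delta := \min_j \inf_{\partial D(\rho_j,r)} |f| > 0$, for $n$ large we have $\sup_{\bigcup_j \partial D(\rho_j,r)}|f_n-f|<\delta$, hence $|f_n-f|<|f|$ on each circle $\partial D(\rho_j,r)$; Rouch\'e's theorem then shows that $f_n$ has exactly $\mu_f(\rho_j)$ zeros, counted with multiplicity, inside $D(\rho_j,r)$ and none on the circle.

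The key step — which I expect to be the main (if not deep) obstacle — is to rule out ``stray'' zeros of $f_n$ away from the $\rho_j$: for $n$ large $f_n$ has no zero in the compact set $L := K' \setminus \bigcup_j D(\rho_j,r)$, since otherwise a subsequence of such zeros would converge, by compactness of $L$, to a point of $L$ which by local uniform convergence would be a zero of $f$ lying outside every $D(\rho_j,r)$, a contradiction. Combined with the vanishing of $g$ off $K\subset K'$, this forces every zero of $f_n$ carrying positive $g$-weight to lie in some $D(\rho_j,r)$ once $n$ is large, and likewise shows that the zeros of $f$ carrying positive $g$-weight are among $\rho_1,\dots,\rho_m$. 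Writing $N := \sum_{j=1}^m \mu_f(\rho_j)$, one then gets for $n$ large
$$\Bigl|\sum_{f_n(\rho)=0}\mu_{f_n}(\rho)g(\rho)-\sum_{f(\rho)=0}\mu_f(\rho)g(\rho)\Bigr| =\Bigl|\sum_{j=1}^m\sum_{\substack{\rho\in D(\rho_j,r)\\ f_n(\rho)=0}}\mu_{f_n}(\rho)\bigl(g(\rho)-g(\rho_j)\bigr)\Bigr|\le \eta N,$$
using the count $\sum_{\rho\in D(\rho_j,r),\,f_n(\rho)=0}\mu_{f_n}(\rho)=\mu_f(\rho_j)$ and $|\rho-\rho_j|<r$. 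Letting $\eta\to0$ (with $K'$, hence $N$, held fixed) yields the almost sure convergence, and therefore the convergence in distribution. The two points requiring care are the measurability of the zero-counting functional, needed for the Skorokhod reduction, and the use of the hypothesis that $f$ does not vanish on an open ball, which enters both to make the zeros isolated and, through the compactness argument on $L$, to keep the zeros of $f_n$ near those of $f$; the Rouch\'e step and the bookkeeping with $\operatorname{supp} g$ are routine.
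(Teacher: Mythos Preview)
Your proof is correct and follows the same two-stage plan as the paper: upgrade to almost sure convergence via a Skorokhod-type coupling, then do a deterministic zero-counting argument near the zeros of $f$. The implementations differ in both stages, however. For the coupling, the paper does \emph{not} invoke Skorokhod directly on $\mathcal{C}(D,\mathbb{C})$; instead it builds a continuous map $\Phi$ into $\mathbb{C}^{\mathbb{N}}\times\mathbb{R}^{\mathbb{N}}$ recording values at a dense sequence together with local suprema, applies Skorokhod there, and then spends most of the proof reconstructing equicontinuous copies $\tilde f_n,\tilde f$ from the coordinates and showing they converge locally uniformly. Your direct application of Skorokhod to the Polish space $\mathcal{C}(D,\mathbb{C})$ bypasses all of this and is cleaner; the paper's route has the virtue of making the measurability bookkeeping completely explicit but is otherwise unnecessary overhead. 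For the deterministic step, the paper counts zeros via the argument principle (integrating $f'_n/f_n$ along small circles) and rules out stray zeros by bounding $|f|$ from below on the compact complement of the disks; you use Rouch\'e and a subsequence/compactness contradiction instead. These are interchangeable, and your final bookkeeping with $g$ and the bound $\eta N$ matches the paper's estimate $\sigma(\varepsilon)\sum_j m_j$ essentially line for line.
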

This theorem can be related to the results proven by Shirai in \cite{S12}.
 Notice that in Theorem \ref{converse}, we do not assume that the holomorphic functions are whole, and we use very weak assumptions on their zeros. 
 
 \section{Logarithmic derivatives of Stieltjes transforms of random measures}

Since the derivative of an analytic function can be written as a suitable contour integral of the function itself, and since all the zeros of the holomorphic functions involved in this article are
on the real line, one deduces the following corollary, which can be seen as a probabilistic version of convergence results on Stieltjes transforms proven by Aizenman and Warzel: see \cite{AW15}, Theorem 4.1, for example.  
\color{black}
\begin{corollary} \label{coro}
Under the assumptions of Theorem \ref{main}, the logarithmic derivative of 
$$s  \mapsto \lim_{A \rightarrow \infty} 
\exp \left( s \int_{[-A,A]} \frac{d M_{0,n}(\rho)}{\rho} \right)\prod_{\rho \in X_n\cap [-A,A]} \left(1- \frac{s}{\rho} \right),$$
which is well-defined for $s \in \mathbb{C} \backslash \mathbb{R}$, can be written as 
$$s  \mapsto \lim_{A \rightarrow \infty} 
\left( \int_{[-A,A]} \frac{d M_{0,n}(\rho)}{\rho}  + \sum_{\rho \in X_n\cap [-A,A]} \frac{1}{s-\rho} \right),$$
and tends in law to 
the logarithmic derivative of 
$$s  \mapsto \lim_{A \rightarrow \infty} 
\exp \left( s \int_{[-A,A]} \frac{d M_{0}(\rho)}{\rho} \right)\prod_{\rho \in X \cap [-A,A]} \left(1- \frac{s}{\rho} \right),$$
equal to 
$$s  \mapsto \lim_{A \rightarrow \infty} 
 \left(\int_{[-A,A]} \frac{d M_{0}(\rho)}{\rho}  + \sum_{\rho \in X \cap [-A,A]} \frac{1}{s-\rho} \right),$$
 for the topology of uniform convergence on compact sets of $\mathbb{C} \backslash \mathbb{R}$. 
\end{corollary}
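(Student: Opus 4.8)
The plan is to deduce everything from Theorem~\ref{main} by checking that passage to the logarithmic derivative is compatible both with the truncated-product construction of the functions and with convergence in law. Throughout, write $F_n$ and $F$ for the two limiting random entire functions appearing in the statement, and recall from Theorem~\ref{main} that, almost surely, $F_n$ and $F$ are entire with all their zeros real; in particular they are nowhere zero on $\mathbb{C}\setminus\mathbb{R}$, so $F_n'/F_n$ and $F'/F$ are a.s.\ well defined and holomorphic there.

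\emph{Step 1: the explicit formula at finite truncation.} For fixed $A>0$ set
$$G_{n,A}(s) := \exp\left( s \int_{[-A,A]} \frac{dM_{0,n}(\rho)}{\rho}\right) \prod_{\rho \in X_n \cap [-A,A]} \left(1 - \frac{s}{\rho}\right),$$
which is a polynomial times an exponential, hence entire with zeros only at the real points of $X_n\cap[-A,A]$. Taking a logarithm and differentiating the finite product term by term gives, for every $s\notin X_n\cap[-A,A]$,
$$\frac{G_{n,A}'(s)}{G_{n,A}(s)} = \int_{[-A,A]} \frac{dM_{0,n}(\rho)}{\rho} + \sum_{\rho \in X_n \cap [-A,A]} \frac{1}{s - \rho}.$$
Now $G_{n,A}\to F_n$ as $A\to\infty$, almost surely and uniformly on compact subsets of $\mathbb{C}$ — this local uniform convergence of the truncated products is exactly what underlies the well-definedness and entireness asserted in Theorem~\ref{main}. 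Fix a compact $K\subset\mathbb{C}\setminus\mathbb{R}$; since $F_n$ is bounded away from $0$ on $K$, the $G_{n,A}$ are eventually nonvanishing on $K$ with a uniform positive lower bound, and by the Cauchy estimates on a slightly larger compact set $G_{n,A}'\to F_n'$ uniformly on $K$; combining these, $G_{n,A}'/G_{n,A}\to F_n'/F_n$ uniformly on $K$. This yields the claimed formula for $F_n'/F_n$, the limit being in the sense of uniform convergence on compacts of $\mathbb{C}\setminus\mathbb{R}$; the same computation applies verbatim to $F$ and $F'/F$.

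\emph{Step 2: convergence in law.} The spaces of continuous functions on $\mathbb{C}$ and on $\mathbb{C}\setminus\mathbb{R}$, with the topology of uniform convergence on compacts, are Polish, so by Theorem~\ref{main} and the Skorokhod representation theorem we may realise, on one probability space, random functions $\widetilde F_n$ and $\widetilde F$ distributed as $F_n$ and $F$ with $\widetilde F_n\to\widetilde F$ almost surely uniformly on compacts of $\mathbb{C}$. On the almost sure event where all the $\widetilde F_n$ and $\widetilde F$ are entire with real zeros, fix a compact $K\subset\mathbb{C}\setminus\mathbb{R}$: then $\inf_K|\widetilde F|>0$, hence $\inf_K|\widetilde F_n|$ is bounded below for $n$ large, while $\widetilde F_n'\to\widetilde F'$ uniformly on $K$; therefore $\widetilde F_n'/\widetilde F_n\to\widetilde F'/\widetilde F$ uniformly on $K$. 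Exhausting $\mathbb{C}\setminus\mathbb{R}$ by such compact sets gives $\widetilde F_n'/\widetilde F_n\to\widetilde F'/\widetilde F$ almost surely for the topology of uniform convergence on compacts of $\mathbb{C}\setminus\mathbb{R}$, and since these have the laws of $F_n'/F_n$ and $F'/F$ by Step~1, the asserted convergence in distribution follows.

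\emph{Main obstacle.} The only genuinely delicate point is the interchange of the limit $A\to\infty$ with the logarithmic derivative in Step~1: one needs the truncated products to converge to $F_n$ locally uniformly on all of $\mathbb{C}$, not merely as a principal value evaluated pointwise, which is precisely the content already established for Theorem~\ref{main}, together with the elementary observation that a locally uniform limit of holomorphic functions which is nonvanishing on a compact set forces the approximants to be eventually nonvanishing there with a uniform lower bound, so that quotients and derivatives pass to the limit. Everything else — the finite-product differentiation and the transfer to the level of laws via Skorokhod — is routine. Note finally that the restriction to $\mathbb{C}\setminus\mathbb{R}$ cannot be removed: on the real line $F_n$ and $F$ have simple zeros, so their logarithmic derivatives have poles there and the stated limit need not exist.
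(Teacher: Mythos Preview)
Your proof is correct and follows essentially the same line as the paper's: compute the logarithmic derivative on the truncated products, interchange with the limit $A\to\infty$ using the uniform-on-compacts convergence established in the proof of Theorem~\ref{main} (the paper points specifically to Proposition~\ref{weierstrass}), and then pass from convergence in law of the functions to convergence in law of their logarithmic derivatives. The only presentational difference is in Step~2: the paper writes the derivative as a Cauchy integral, $h'(s)/h(s) = (2\pi h(s))^{-1}\int_0^{2\pi} h(s+e^{i\theta})e^{-i\theta}\,d\theta$, and invokes this to argue directly that $h\mapsto h'/h$ is continuous on the relevant space, whereas you route through Skorokhod to reduce to an almost sure statement; the content is the same.
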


In \cite{AW15}, some universal properties of the Cauchy distribution are proven, for the value of a large class of Herglotz functions, i.e. functions mapping the complex upper half-plane to itself, which includes some Stieltjes transform of measures. In this article, we prove universal properties of the Cauchy distribution in a probabilistic setting when the distribution of the underlying measure is translation-invariant. 
\begin{theorem} \label{Cauchy}
Let $M$ be a random atomic measure on $\mathbb{R}$, with finitely many atoms on compact sets. We assume that the distribution of $M$ is invariant by translation, and that for some $C >0$, $\alpha \in [0,1)$, 
$$ 0 <  K:= \mathbb{E}  [M(1) ] < \infty$$
and for all $x > 0$, 
$$\operatorname{Var} ( M(x)) \leq C (1+ |x|^{2 \alpha} ).$$
Then, for all $s \in \mathbb{R}$, the Stieltjes transform of $M$ at $s$, given by 
$$\frac{1}{K \pi} \underset{A \rightarrow \infty}{\lim} \int_{-A}^A \frac{dM(\rho)}{s-\rho} $$
is almost surely well-defined, and follows the standard Cauchy distribution, with density $$x \mapsto \frac{1}{\pi (1 + x^2)}$$ with respect to the Lebesgue measure. 
\end{theorem}

Note that in the work of Aizenman and Warzel, one needs the shift-amenability assumption of the function which is considered, i.e. the existence of a limiting measure for the image by the function of the uniform probability distribution on $[-L,L]$, when $L$ tends to infinity. In the present setting, the assumption of shift-amenability of the Stieltjes transform of $M$ is not made, and it is replaced by the invariance by translation of the distribution of $M$. Our result applies, for instance, to the measure $M$ associated with the Sine$_{\beta}$ point process. But to our knowledge, it is not known whether the Stieltjes transform of the Sine$_{\beta}$ point process is shift-amenable or not.

Another example of a setting where Theorem \ref{Cauchy} applies is discussed in detail by Najnudel and Vir'ag in \cite{NV21bead}. In this setting, one takes for $M$ a measure with support given by a Sine$_{\beta}$ point process, normalized in such a way that the average spacing between the points is $ 2\pi$, the measure of points being i.i.d. Gamma variables of shape parameter $\beta/2$ and mean $2$. In this case, $K = 1/\pi$, and then 
$$\underset{A \rightarrow \infty}{\lim} 
\int_{-A}^A \frac{d M (\rho)}{s - \rho}$$
is a standard Cauchy random variable.

We also give a full proof of the following result on the Riemann zeta function, which was first stated by Sodin (\cite{S18}) who sketched a proof of it and who shared his ideas of the proof with us:
\begin{theorem}[Sodin \cite{S18}] \label{Cauchy-Riemann}
Let us assume the Riemann hypothesis, and  
let $\omega$ be a uniform random variable on $[0,1]$. Then, the family of random variables
$$ \left(\frac{2 i }{\log T} \frac{\zeta'}{\zeta} \left( \frac{1}{2} + i T \omega \right)  +  i \right)_{T \geq 2}$$
converges in distribution to a standard Cauchy variable, when $T$ tends to infinity. 
\end{theorem}

 Theorem \ref{main} and Corollary \ref{coro} are proven in Section \ref{proof-theorem}. 
  Proposition \ref{crit}  and Theorem \ref{converse} are proved respectively in Sections \ref{criterion} and \ref{proof-converse}. 
    In Section \ref{examples}, we provide a number of significant examples for which Theorem \ref{main} applies. 
In Section \ref{cauchy}, we prove Theorems \ref{Cauchy} and \ref{Cauchy-Riemann}. 

\section*{Acknowledgements}

We express our deepest thanks to Sasha Sodin for taking the time to explain to us some of the arguments in his paper \cite{S18}. More specifically he explained to us how one could recover the stochastic zeta function from the results of Aizenman and Warzel (\cite{AW15}) on the  Stieltjes transform of the Sine kernel by careful integration of the logarithmic derivative, using our uniform estimates on the expected number of points in compact intervals and their variance. He also very generously shared with us his ideas for the proof of Theorem~\ref{Cauchy-Riemann}.

\section{Proof of the Theorem \ref{main} and Corollary \ref{coro} } \label{proof-theorem}

The proof of Theorem \ref{main} is obtained by combining several intermediate results, which are successively stated and proven in this section. 
The first result provides sufficient conditions under which a convergence in law of random holomorphic functions can be deduced from a convergence of the 
finite-dimensional marginals, which is more convenient to control.  
\begin{proposition} \label{finite}
Let $(f_n)_{n \in \mathbb{N}}$ be a sequence of random entire functions. We assume that the finite dimensional marginals of $f_n$ converge in distribution to the marginals of a random function $f$, and that
for all compacts $K$, the family of random variables $(\sup_K |f_n|)_{n \in \mathbb{N}}$ is tight. Then $f$ is entire and $f_n$ converges in law to $f$ for the topology of uniform convergence on compact sets. 
\end{proposition}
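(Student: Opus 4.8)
The plan is to establish tightness of the sequence $(f_n)$ in the space $\mathcal{C}(\mathbb{C},\mathbb{C})$ with the topology of uniform convergence on compact sets, and then identify the limit via the convergence of finite-dimensional marginals. Since a sequence of probability measures on a Polish space converges weakly iff it is tight and all subsequential limits agree, and since the finite-dimensional marginals determine a probability measure on $\mathcal{C}(\mathbb{C},\mathbb{C})$, the crux is tightness. By the probabilistic analogue of the Arzel\`a--Ascoli theorem, tightness in $\mathcal{C}(\mathbb{C},\mathbb{C})$ follows from: (i) tightness of $(f_n(0))_{n}$, or more generally of $(\sup_K |f_n|)_n$ for one compact $K$ (which we are given for all $K$), and (ii) the family $(f_n)_n$ being \emph{compact-equicontinuous in probability}, i.e.\ for every compact $K$ and every $\varepsilon, \delta > 0$ there exists $\eta > 0$ such that $\limsup_n \mathbb{P}(\sup_{|z-w|<\eta,\, z,w\in K} |f_n(z)-f_n(w)| > \varepsilon) < \delta$.

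The key step is to deduce compact-equicontinuity in probability from the tightness of $(\sup_K |f_n|)_n$ using holomorphy. First I would fix a compact $K$, enclose it in a slightly larger compact disk $K'$ (say $K \subset \{|z|\le R\} =: K'$ with a buffer of width $1$), and invoke the Cauchy estimate: for $z$ in the $1$-neighbourhood of $K$ one has $|f_n'(z)| \le \sup_{K'} |f_n|$ via integration over a circle of radius $1$ around $z$. Hence $\sup_{K}|f_n'| \le \sup_{K'}|f_n|$, so the tightness of $(\sup_{K'}|f_n|)_n$ (given by hypothesis) yields tightness of $(\sup_K |f_n'|)_n$. Since $|f_n(z)-f_n(w)| \le |z-w|\,\sup_K|f_n'|$ for $z,w$ in a convex compact $K$ (and one reduces the general case to finitely many convex pieces), this immediately gives the required compact-equicontinuity in probability. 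Combined with the given tightness of $(\sup_K|f_n|)_n$ as the pointwise bound, the tightness criterion applies and $(f_n)_n$ is tight in $\mathcal{C}(\mathbb{C},\mathbb{C})$.

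Finally I would extract, by Prokhorov, a subsequential weak limit $\tilde f$; its finite-dimensional marginals coincide with those of $f$ by the hypothesis, so $\tilde f \overset{d}{=} f$, which forces the whole sequence to converge in law to $f$. That $f$ is entire: as a uniform-on-compacts limit in law of entire functions, along an a.s.-convergent realization (Skorokhod representation on the Polish space $\mathcal{C}(\mathbb{C},\mathbb{C})$) the limit is a locally uniform limit of entire functions, hence entire by Weierstrass's theorem; thus $f$ is a.s.\ entire. The main obstacle is purely in setting up the tightness criterion correctly in the probabilistic (rather than deterministic Arzel\`a--Ascoli) setting --- i.e.\ making precise the notion of compact-equicontinuity in probability and the corresponding tightness theorem in $\mathcal{C}(\mathbb{C},\mathbb{C})$ --- but this is standard and, as the authors note in the remark following Theorem \ref{main}, was already carried out in \cite{CHNNR19}, so I would cite that and reproduce only the short Cauchy-estimate argument above.
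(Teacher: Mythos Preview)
Your proposal is correct and follows essentially the same route as the paper: use the Cauchy integral formula to upgrade tightness of $(\sup_K|f_n|)_n$ to tightness of $(\sup_K|f_n'|)_n$, deduce compact-equicontinuity in probability, and then invoke the tightness criterion from \cite{CHNNR19} (their Definition~2.4 and Lemma~2.5) together with convergence of finite-dimensional marginals. The paper's proof is terser---it does not spell out the Prokhorov/Skorokhod steps you mention---but the substance is identical.
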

\begin{proof}
We have 
$$f'_n(s) = \frac{1}{2 \pi} \int_{0}^{2 \pi} f_n(s + e^{i \theta})  e^{- i \theta} d\theta,$$
which shows that for all compacts $K$, $(\sup_K |f'_n|)_{n \in \mathbb{N}}$ is also tight. 
For $ \delta > 0$, and $\Delta > 0$, we have 
$$\underset{n \rightarrow \infty}{\lim\sup} \, \mathbb{P} \left( \sup_{|s_1 - s_2| < \Delta, s_1, s_2 \in K} |f_n(s_1) - f_n(s_2)| \geq \delta \right)
$$ $$\leq \underset{n \rightarrow \infty}{\lim\sup} \, \mathbb{P} \left( \sup_K |f'_n| \geq \delta/\Delta \right)
\underset{\Delta \rightarrow 0}{\longrightarrow} 0,$$
by the tightness of $(\sup_K |f'_n|)_{n \in \mathbb{N}}$. We deduce that for $ \delta > 0$, $\varepsilon > 0$, the upper limit above 
is smaller than $\varepsilon$ if $\Delta$ is small enough. From Definition 2.4. and Lemma 2.5 in \cite{CHNNR19}, we deduce that 
$f_n$ converges to $f$ for the topology of uniform convergence on compact sets, which then implies that $f$ is entire. 

\end{proof}

In Theorem \ref{main}, we remark that the product of $(1- s/\rho)$ is restricted to an interval $[-A,A]$, since in most interesting examples, this product is not 
absolutely convergent. A double limit $A \rightarrow \infty$ and $n \rightarrow \infty$ in then involved. 
In order to be able to handle this double limit, the following result is useful: 
\begin{proposition} \label{fnA}
Let $(f_n)_{n \in \mathbb{N}}$, $(f_{n,A})_{n \in \mathbb{N}, A > 0}$, $f_{\infty}$, $(f_{\infty,A})_{A > 0}$, be random entire functions, defined on the same probability space for each $n$ (but not necessarily for all $n$), satisfying the following assumptions: 
\begin{itemize}
\item Almost surely, for all $n \in \mathbb{N}$ and $A > 0$, $f_{n,A}$ is not identically zero. 
\item Almost surely, for all $n \in \mathbb{N}$ integer or $n = \infty$, $f_{n,A}$ converges to $f_n$, uniformly on compact sets, when $A$ goes to infinity. 
\item For all $A > 0$, and all $K$ compact, $(\sup_{K} |f_{n,A}|)_{n \in \mathbb{N}}$ is tight. 
\item We have, for all $\varepsilon > 0$ and $K$ compact, 
$$\sup_{n \in \mathbb{N}} \mathbb{P} \left( \sup_{K} \left( |(f_{n}/f_{n,A}) - 1| \mathds{1}_{f_{n,A} \neq 0} \geq \varepsilon \right) \right)
\underset{A \rightarrow \infty}{\longrightarrow} 0.$$
\item For all $A > 0$, the finite dimensional marginals of $f_{n,A}$ converge to those of $f_{\infty, A}$ in distribution. 
\end{itemize}
Then, $f_n$ converges in distribution to $f_{\infty}$ when $n$ goes to infinity, for the topology of uniform convergence on compact sets.
\end{proposition}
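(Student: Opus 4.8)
The plan is to reduce the statement to Proposition \ref{finite}. Since $f_\infty$ is entire by hypothesis, it suffices to verify its two assumptions: (i) for every compact $K \subset \mathbb{C}$ the family $(\sup_K |f_n|)_{n \in \mathbb{N}}$ is tight, and (ii) the finite-dimensional marginals of $f_n$ converge in distribution to those of $f_\infty$. Throughout, the key algebraic observation is that, given $\varepsilon > 0$, $K$ compact and (for the moment arbitrary) $A > 0$, on the event $E_n(\varepsilon, K) := \{ \sup_K |(f_n/f_{n,A}) - 1| \, \mathds{1}_{f_{n,A} \neq 0} < \varepsilon \}$ appearing in the fourth hypothesis, the a priori meromorphic quotient $f_n/f_{n,A}$ is in fact holomorphic on $K$ with $\sup_K |(f_n/f_{n,A}) - 1| < \varepsilon$: a pole of the quotient at some $z_0 \in K$ would force $|(f_n/f_{n,A})(z) - 1|$ to exceed $\varepsilon$ at points $z \in K$ near $z_0$ with $f_{n,A}(z) \neq 0$ (such points exist because $f_{n,A} \not\equiv 0$ almost surely, so its zeros are isolated), contradicting $E_n(\varepsilon, K)$; and the bound then extends from the complement of the finitely many zeros of $f_{n,A}$ in $K$ to all of $K$ by continuity.

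For (i), fix $K$ and $\eta > 0$. By the fourth hypothesis applied with $\varepsilon = 1$, choose $A$ so that $\mathbb{P}(E_n(1,K)) \geq 1 - \eta/2$ for all $n$. On $E_n(1,K)$ the observation above gives $\sup_K |f_n| = \sup_K |(f_n/f_{n,A}) \, f_{n,A}| \leq 2 \sup_K |f_{n,A}|$. For this fixed $A$, the third hypothesis provides $M$ with $\mathbb{P}(\sup_K |f_{n,A}| > M) \leq \eta/2$ for all $n$, whence $\mathbb{P}(\sup_K |f_n| > 2M) \leq \eta$ for all $n$, proving tightness.

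For (ii), fix $s_1, \dots, s_k \in \mathbb{C}$, let $K$ be a closed disk containing them, and (it being enough to characterise weak convergence on $\mathbb{C}^k$) fix a bounded Lipschitz $h : \mathbb{C}^k \to \mathbb{R}$; write $\vec f_m := (f_m(s_1), \dots, f_m(s_k))$ and $\vec f_{m, A} := (f_{m,A}(s_1), \dots, f_{m,A}(s_k))$. I would estimate $|\mathbb{E} h(\vec f_n) - \mathbb{E} h(\vec f_\infty)|$ by inserting $\mathbb{E} h(\vec f_{n,A})$ and $\mathbb{E} h(\vec f_{\infty, A})$. The term $|\mathbb{E} h(\vec f_{n,A}) - \mathbb{E} h(\vec f_{\infty,A})|$ tends to $0$ as $n \to \infty$ for each fixed $A$ by the fifth hypothesis; the term $|\mathbb{E} h(\vec f_{\infty,A}) - \mathbb{E} h(\vec f_\infty)|$ tends to $0$ as $A \to \infty$, since the second hypothesis with $n = \infty$ gives $\vec f_{\infty, A} \to \vec f_\infty$ almost surely and $h$ is bounded and continuous. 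For the remaining term, note that on $E_n(\varepsilon, K) \cap \{\sup_K |f_{n,A}| \leq M\}$ one has, for every $i$, $|f_n(s_i) - f_{n,A}(s_i)| \leq \varepsilon |f_{n,A}(s_i)| \leq \varepsilon M$ (immediate where $f_{n,A}(s_i) \neq 0$, and where $f_{n,A}(s_i) = 0$ the holomorphic extension of $f_n/f_{n,A}$ forces $f_n(s_i) = 0$ as well). Using $|h(\vec a) - h(\vec b)| \leq \min(2 \|h\|_\infty, \mathrm{Lip}(h) \sum_i |a_i - b_i|)$, and choosing in order $\eta$ small, then $M = M(\eta, K)$ from the third hypothesis, then $\varepsilon$ small, then $A$ large from the fourth hypothesis, we obtain $\sup_n |\mathbb{E} h(\vec f_n) - \mathbb{E} h(\vec f_{n,A})| < \delta$ for any prescribed $\delta > 0$. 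Combining the three bounds (the first and last uniform in $n$, then letting $n \to \infty$ in the middle one for a fixed large $A$) gives $\limsup_n |\mathbb{E} h(\vec f_n) - \mathbb{E} h(\vec f_\infty)| \leq \delta$, hence (ii).

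I expect the main obstacle to be precisely this bookkeeping: on the one hand, establishing rigorously that on the favourable events of the fourth hypothesis the quotient $f_n/f_{n,A}$ is genuinely holomorphic and uniformly close to $1$ on $K$ — so that it may be treated as a multiplicative error and the (isolated) zeros of $f_{n,A}$ cause no harm — and, on the other hand, respecting the order of the quantifiers $\eta, M, \varepsilon, A$ in the last estimate, since $M$ is produced by tightness (hence depends on $\eta$) while $A$ depends on $\varepsilon$ and $K$. Once (i) and (ii) are in hand, Proposition \ref{finite} yields the convergence of $f_n$ to $f_\infty$ for the topology of uniform convergence on compact sets, which is the claim.
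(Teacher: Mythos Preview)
Your strategy is the same as the paper's: reduce to Proposition~\ref{finite}, establish tightness of $\sup_K|f_n|$ from the multiplicative estimate, then obtain the finite-dimensional marginals by inserting $f_{n,A}$ and $f_{\infty,A}$ and testing against bounded Lipschitz functionals. Your treatment of the ratio $f_n/f_{n,A}$ on the good event (holomorphic extension across the isolated zeros of $f_{n,A}$) is also how the paper argues.

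There is, however, one genuine quantifier slip in step~(ii), precisely at the point you flag as delicate. You write ``then $M=M(\eta,K)$ from the third hypothesis'', but the third hypothesis only gives tightness of $(\sup_K|f_{n,A}|)_n$ for \emph{fixed} $A$, so the $M$ it produces is $M(\eta,K,A)$. With your stated order $\eta,M,\varepsilon,A$ this creates a circularity: $M$ depends on $A$, $\varepsilon$ must be small relative to $M$, and $A$ is then chosen from the fourth hypothesis depending on $\varepsilon$. The paper breaks the circle by using, at this step, the tightness of $\sup_K|f_n|$ already established in (i) rather than the third hypothesis: on $E_n(\varepsilon,K)$ with $\varepsilon<1/2$ one has $|f_{n,A}|\le 2|f_n|$, so on $E_n(\varepsilon,K)\cap\{\sup_K|f_n|\le M\}$ you get $|f_n(s_i)-f_{n,A}(s_i)|\le 2\varepsilon M$, and now $M$ depends only on $\eta$ and $K$. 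With this one-line fix your argument goes through and matches the paper's.
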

\begin{proof}
Let us first show the tightness of $(\sup_{K} |f_n|)_{n \in \mathbb{N}}$ for a given compact set $K$. It is equivalent to consider a general open disc $D$ instead of $K$. 
For fixed $\delta > 0$, we have  
$$\mathbb{P} \left( \sup_{\overline{D}} \left( |(f_{n}/f_{n,A}) - 1| \mathds{1}_{f_{n,A} \neq 0} \geq 0.01 \right) \right) \leq \delta$$
if $A$ is large enough, independently of $n$. For such $A$, we have, with probability $\geq 1- \delta$, 
$$\sup_{D \backslash \{s \in D, f_{n,A}(s) = 0\}} |f_n| \leq 1.01 \sup_{D} |f_{n,A}|.$$
Since $f_n$ and $f_{n,A}$ are assumed to be entire, $f_n$ is continuous, and the zeros of $f_{n,A}$ are isolated, since $f_{n,A}$ is not identically zero by assumption.
Hence, 
$$\sup_{D} |f_n| \leq 1.01 \sup_{D} |f_{n,A}|$$
with probability  $\geq 1- \delta$. 
We deduce, for $A$ large enough depending on $\delta$, and $B > 0$, 
$$\mathbb{P} \left( \sup_{D} |f_n|  \geq B \right) \leq  \delta + \mathbb{P} \left( \sup_{D} |f_{n,A}| \geq 0.99 B \right).$$
By tightness of $(\sup_{D} |f_{n,A}|)_{n \in \mathbb{N}}$, the last probability is smaller than $\delta$, independently of $n$, for $B$ large enough depending on $A$, and then for $B$
large enough depending on $\delta$. This gives, independently of $n$,  
$$\mathbb{P} \left( \sup_{D} |f_n|  \geq B \right) \leq 2 \delta$$
for $B$ large enough depending on $\delta$, i.e. the tightness of  $(\sup_{D} |f_n|)_{n \in \mathbb{N}}$. 

We also have, for $\varepsilon, \eta \in (0,1/2)$, since the zeros of $f_{n,A}$ are isolated, 
 \begin{align*}
\mathbb{P} & \left( \sup_{D} |f_n - f_{n,A}| >  \varepsilon \right)
 = \mathbb{P} \left( \sup_{D, f_{n,A}(s) \neq 0} |f_n - f_{n,A}| > \varepsilon \right)
\\ & \leq \mathbb{P} \left( \sup_{D, f_{n,A}(s) \neq 0} |(f_n/f_{n,A}) - 1| \geq \varepsilon \eta \right)
+  \mathbb{P} \left( \sup_{D} |f_n| \geq 1/2\eta \right)
\end{align*}
If we take the supremum in $n$ and then the upper limit in $A$, the first term goes to zero by assumption. 
By tightness of $(\sup_{D} |f_n|)_{n \in \mathbb{N}}$, the second term is bounded by a quantity going to zero with $\eta$. 
Hence, 
$$\sup_{n \in \mathbb{N}} \mathbb{P}  \left( \sup_{D} |f_n - f_{n,A}| > \varepsilon \right) \underset{A \rightarrow \infty}{\longrightarrow} 0.$$

From the tightness of $(\sup_{K} |f_n|)_{n \in \mathbb{N}}$ for $K$ compact, and from Proposition \ref{finite}, it is now enough to prove the convergence of the finite-dimensional marginals. We consider a functional $F$ on the space of continuous functions from $\mathbb{C}$ to $\mathbb{C}$, which depends only on the value of the functions at finitely many points $s_1, \dots, s_p$, which is bounded by a constant $M > 0$, and which is $1$-Lipschitz with respect to each of the values at $s_1, \dots, s_p$. It is enough to show that 
$$\mathbb{E} [ F(f_n)] \underset{n \rightarrow \infty}{\longrightarrow} \mathbb{E} [F (f_{\infty})].$$
We have, for all $\varepsilon > 0$,
\begin{align*}|\mathbb{E} [F (f_{\infty})]  - \mathbb{E} [F (f_{n})]| &
\leq |\mathbb{E} [F (f_{\infty,A})] - \mathbb{E} [F (f_{n,A})]|
\\ & +  \mathbb{E} \left[ \min\left(2M, p \sup_{\{s_1, \dots, s_p\}} |f_{n} - f_{n,A}| \right) \right] 
\\ & +  \mathbb{E} \left[ \min\left(2M, p \sup_{\{s_1, \dots, s_p\}} |f_{\infty} - f_{\infty,A}| \right)\right]
\\ & \leq  |\mathbb{E} [F (f_{\infty,A})] - \mathbb{E} [F (f_{n,A})]|
\\ & + p \varepsilon + 2M \mathbb{P} \left(\sup_{\{s_1, \dots, s_p\}} |f_{n} - f_{n,A}| > \varepsilon \right)
\\ & + \mathbb{E} \left[ \min \left(2M, p \sup_{\{s_1, \dots, s_p\}} |f_{\infty} - f_{\infty,A}| \right)\right].
\end{align*}
Since the finite-dimensional marginals of $f_{n,A}$ tend to those of $f_{\infty, A}$ when $n$ goes to infinity, we have
\begin{align*}
\underset{n \rightarrow \infty}{\lim \sup} \, |\mathbb{E} [F (f_{\infty})]  - \mathbb{E} [F (f_{n})]|
& \leq p \varepsilon + 2M \sup_{n \in \mathbb{N}} \mathbb{P} \left(\sup_{\{s_1, \dots, s_p\}} |f_{n} - f_{n,A}| > \varepsilon \right)
\\ & +\mathbb{E} \left[ \min \left(2M, p \sup_{\{s_1, \dots, s_p\}} |f_{\infty} - f_{\infty,A}| \right)\right].
\end{align*}
We have proven that the second term tends to zero when $A$ goes to infinity. By assumption, $f_{\infty,A}$ tends a.s. to $f_{\infty}$ (uniformly on compact sets), so 
the last term also goes to zero. By letting $A \rightarrow \infty$, and then $\varepsilon \rightarrow 0$, we deduce that 
$$\underset{n \rightarrow \infty}{\lim \sup} \, |\mathbb{E} [F (f_{\infty})]  - \mathbb{E} [F (f_{n})]| = 0,$$
which gives the convergence of the finite-dimensional marginals of $f_n$ towards those of $f_{\infty}$, and completes the proof of the proposition. 
\end{proof}

In Theorem \ref{main}, in the case where $0 \notin X$,  the product 
$$\prod_{\rho \in X \in [-A,A]} \left(1- \frac{s}{\rho} \right)$$ 
provides a holomorphic function whose zeros are given by the set $X \cap [-A,A]$. 
In order to get a function whose zeros are given by $X$, we let $A \rightarrow \infty$. If the product 
$$\prod_{\rho \in X} \left(1- \frac{s}{\rho} \right)$$ 
is not absolutely convergent,  another way to solve this issue is to consider the  Weierstrass product
$$\prod_{\rho \in X} \left(1- \frac{s}{\rho} \right) e^{s/\rho}$$
if this last product is convergent. In the next proposition, we make the connection between the two points of view. 

%This  Weierstrass product has the inconvenient that the function we use to compensate is random. 
%We can rewrite this  Weierstrass product as a limit: 
%$$s \mapsto \lim_{A \rightarrow \infty}  \exp \left( s \int_{[-A,A]} \frac{d M_X(\rho)}{\rho} \right)\prod_{\rho \in X \cap [-A,A]} \left(1- \frac{s}{\rho} \right),$$
%where $M_X$ is the point measure corresponding to the point process $X$. 
%In order to have a deterministic compensation, we need to replace the integral with respect to $d M_X$ by an integral with respect to a deterministic measure $d M_0$ sufficiently close to $d M_X$. 

\begin{proposition} \label{weierstrass}
Let $X$ a simple point process on the real line, locally finite.  Let $M_X$ be the random measure corresponding to $X$, and let $M_0$ be a deterministic Radon measure on the real line satisfying the following conditions, given in Theorem \ref{main}: 
$$\int_{\mathbb{R}} \frac{|M_0 (x)|}{|x|^3} dx < \infty$$ and almost surely, 
$$\int_{\mathbb{R}} \frac{|M_0 (x) - M_X(x) |}{x^2} dx < \infty, \; \frac{|M_0 (x) - M_X(x) |}{|x|} \underset{|x| \rightarrow \infty}{\longrightarrow} 0.$$
Then, almost surely, the function
$$s \mapsto \exp \left( s \int_{[-A,A]} \frac{d M_0(\rho)}{\rho} \right)\prod_{\rho \in X \cap [-A,A]} \left(1- \frac{s}{\rho} \right),$$
is well-defined and, for $A \rightarrow \infty$, it  converges uniformly on compact sets to the function
$$ s \mapsto \exp \left(  s \int_{\mathbb{R}}  \frac{ M_0(x) - M_X (x)}{x^2} dx \right) \prod_{\rho \in X} \left(1- \frac{s}{\rho} \right) e^{s/\rho},$$
the  Weierstrass product being absolutely convergent, uniformly on compact sets.
\end{proposition}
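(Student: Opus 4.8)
The plan is to take logarithms away from the zeros, peel off a fixed finite set of ``small'' points of $X$, and resolve the double limit in $A$ through an Abel--summation (integration by parts) identity converting $\sum 1/\rho$ into an integral of $(M_0(x)-M_X(x))/x^2$.

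I would first record some consequences of the hypotheses. Since $x\mapsto M_0([0,x])$ is nondecreasing and $\int_{\mathbb R}|M_0(x)|/|x|^3\,dx<\infty$, the measure $M_0$ is finite on bounded sets, has no atom at $0$, and $M_0(x)=o(x^2)$ as $x\to0$ (compare $\int_{a/2}^{a}M_0(x)x^{-3}\,dx\ge c\,M_0(a/2)a^{-2}$ with the vanishing tail of the integral); in particular $\int_{[-A,A]}dM_0(\rho)/\rho$ is an absolutely convergent integral and each pre-limit function is a genuine entire function with simple zeros exactly at $X\cap[-A,A]$. Together with $\int_{\mathbb R}|M_0(x)-M_X(x)|/x^2\,dx<\infty$ almost surely, an atom of $M_X$ at $0$ would force $|M_0(x)-M_X(x)|\ge 1/2$ for $x$ near $0$, making this integral diverge; hence $0\notin X$ almost surely, and writing $D(x)=M_0(x)-M_X(x)$ one also gets $D(x)=o(x^2)$ as $x\to0$. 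Finally, with $N(x)=\#\{\rho\in X:\ R<|\rho|\le x\}$, Tonelli applied to $\rho^{-2}=2\int_\rho^\infty x^{-3}\,dx$ gives $\sum_{\rho\in X}\rho^{-2}\le C_R+2\int_R^\infty N(x)x^{-3}\,dx$, which is finite almost surely because $|M_X|\le|M_0|+|M_0-M_X|$, $\int_{|x|\ge1}|M_0(x)|x^{-3}\,dx<\infty$ and $\int_{|x|\ge1}|M_0(x)-M_X(x)|x^{-3}\,dx\le\int_{|x|\ge1}|M_0(x)-M_X(x)|x^{-2}\,dx<\infty$. Hence $\prod_{\rho\in X}(1-s/\rho)e^{s/\rho}$ is absolutely and locally uniformly convergent (standard, from $\sum\rho^{-2}<\infty$ and $|(1-z)e^{z}-1|\le C|z|^2$ for $|z|\le 1/2$), and the asserted limit --- this product times $\exp\!\bigl(s\int_{\mathbb R}D(x)x^{-2}\,dx\bigr)$, the integral being absolutely convergent --- is almost surely well-defined and entire.

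For the convergence, fix a compact $K$, choose $R>2\sup_{s\in K}|s|$, and set $P_R(s)=\prod_{\rho\in X,\ |\rho|\le R}(1-s/\rho)$, independent of $A$ once $A>R$. For $|\rho|>R$ and $s\in K$ we have $|s/\rho|<1/2$, so $\log(1-s/\rho)$ (principal branch) is defined with $|\log(1-s/\rho)+s/\rho|\le C|s|^2/\rho^2$, and the finite product factors as
$$\prod_{\rho\in X\cap[-A,A]}\!\Bigl(1-\frac{s}{\rho}\Bigr)=P_R(s)\,\exp\!\Bigl(\sum_{\rho\in X,\ R<|\rho|\le A}\!\!\bigl[\log(1-s/\rho)+s/\rho\bigr]\Bigr)\exp\!\Bigl(-s\!\!\sum_{\rho\in X,\ R<|\rho|\le A}\!\!\frac1\rho\Bigr).$$
Using $\sum_{R<|\rho|\le A}\rho^{-1}=\int_{[-A,A]}dM_X/\rho-\int_{[-R,R]}dM_X/\rho$ (valid since $0\notin X$) and absorbing the last exponential into the prefactor $\exp\!\bigl(s\int_{[-A,A]}dM_0/\rho\bigr)$, the pre-limit function $g_A(s)$ becomes
$$P_R(s)\,\exp\!\Bigl(\sum_{R<|\rho|\le A}\!\bigl[\log(1-s/\rho)+s/\rho\bigr]\Bigr)\exp\!\Bigl(s\!\int_{[-A,A]}\!\!\frac{d(M_0-M_X)(\rho)}{\rho}+s\!\int_{[-R,R]}\!\!\frac{dM_X(\rho)}{\rho}\Bigr).$$
The key point is the integration-by-parts identity: since $D$ has distribution-function behaviour $D(x)=o(x^2)$ near $0$ (so no boundary term there) and is a difference of measures finite on bounded sets,
$$\int_{[-A,A]\setminus\{0\}}\frac{d(M_0-M_X)(\rho)}{\rho}=\frac{D(A)}{A}+\frac{D(-A)}{A}+\int_{-A}^{A}\frac{D(x)}{x^{2}}\,dx,$$
the first two terms being the boundary contributions at $\pm A$; letting $A\to\infty$ they vanish by the hypothesis $|M_0(x)-M_X(x)|/|x|\to0$, and the integral converges to $\int_{\mathbb R}D(x)x^{-2}\,dx$ by the integrability hypothesis.

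Now let $A\to\infty$ with $R$ fixed: $P_R(s)$ is constant; $\sum_{R<|\rho|\le A}[\log(1-s/\rho)+s/\rho]\to\sum_{|\rho|>R}[\log(1-s/\rho)+s/\rho]$ uniformly on $K$, the remainder being dominated by $C(\sup_K|s|)^2\sum_{|\rho|>A}\rho^{-2}$; and the exponent of the last factor converges uniformly on $K$ by the previous step (the $s$-factor is bounded on $K$ and everything else is $s$-free). As $\exp$ is locally Lipschitz, $g_A\to P_R(s)\,e^{\sum_{|\rho|>R}[\log(1-s/\rho)+s/\rho]}\,e^{s\int_{\mathbb R}D(x)x^{-2}dx}\,e^{s\int_{[-R,R]}dM_X/\rho}$ uniformly on $K$; since $P_R(s)\,e^{s\int_{[-R,R]}dM_X/\rho}=\prod_{|\rho|\le R}(1-s/\rho)e^{s/\rho}$ and $e^{\sum_{|\rho|>R}[\log(1-s/\rho)+s/\rho]}=\prod_{|\rho|>R}(1-s/\rho)e^{s/\rho}$, this limit equals exactly $e^{s\int_{\mathbb R}(M_0(x)-M_X(x))x^{-2}dx}\prod_{\rho\in X}(1-s/\rho)e^{s/\rho}$, and $K$ being arbitrary the convergence is uniform on compact sets. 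I expect the main obstacle to be the integration-by-parts step with the careful verification that there is no boundary contribution at the origin --- precisely where the near-origin strength of $\int|M_0|/|x|^3<\infty$ is used --- together with the bookkeeping needed to regroup the $A$-truncated product and its exponential prefactor into pieces that each converge separately and uniformly on compacts.
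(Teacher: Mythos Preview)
Your proof is correct and follows essentially the same route as the paper: both establish $\sum_{\rho\in X}\rho^{-2}<\infty$ via the layer-cake/Tonelli representation of $\rho^{-2}$, and both hinge on the Abel/integration-by-parts identity expressing $\int_{[-A,A]}\rho^{-1}\,d(M_0-M_X)$ as $\int_{-A}^A (M_0(x)-M_X(x))\,x^{-2}\,dx$ plus boundary terms at $\pm A$ that vanish by the hypothesis $|M_0(x)-M_X(x)|/|x|\to 0$. The differences are purely presentational---you peel off small zeros and take logarithms explicitly, while the paper multiplies and divides by $e^{s/\rho}$ to form the truncated Weierstrass product directly and obtains the same identity via Fubini rather than stating it as integration by parts (and writes $D((-A)-)$ in place of your $D(-A)$, which is immaterial in the limit).
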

\begin{proof}
We have 
$$\sum_{\rho \in X} \rho^{-2} = \sum_{\rho \in X} \int_{\mathbb{R}} \frac{2 dx}{|x|^3} (\mathds{1}_{x \geq \rho \geq 0} + \mathds{1}_{x < \rho < 0} )
= \int_{\mathbb{R}} \frac{2 |M_X(x)|}{|x|^3} dx,$$
and then 
\begin{align*}
\sum_{\rho \in X} \rho^{-2} & \leq \int_{[-1,1]} \frac{2 |M_X(x)|}{|x|^3} dx
+ \int_{\mathbb{R} \backslash [-1,1]} \frac{2 |M_0(x)|}{|x|^3} dx
\\ & +  \int_{\mathbb{R} \backslash [-1,1]} \frac{2 |M_0(x) - M_X(x)|}{|x|^3} dx.
\end{align*}
The two last integrals are almost surely finite by assumption. The first integral is also almost surely finite, since $X$ is assumed to 
be locally finite, and has almost surely no point at zero, because for any $A > 0$ (note that since we are working on a compact interval it is enough to check the integrability condition with $x^2$ in the denominator) 
$$\int_{[-A,A]} \frac{|M_X(x) |}{x^2} dx
\leq A \int_{[-A,A]} \frac{|M_0(x) |}{|x|^3} dx + \int_{[-A,A]} \frac{|M_0 (x) - M_X(x) |}{x^2} dx  < \infty.$$
Hence, 
$$\sum_{\rho \in X} \rho^{-2} < \infty$$
almost surely, which ensures the pointwise convergence of the  Weierstrass product
$$s \mapsto \prod_{\rho \in X} \left(1- \frac{s}{\rho} \right) e^{s/\rho},$$
 and then its uniform convergence on compact sets by Montel's theorem. 
Now,  for a Radon measure $M$ on $\mathbb{R}$ and for $y \in \mathbb{R}$, 
let us denote 
$$M(y-) := \underset{y' \rightarrow y, \, y' < y}{\lim} M(y'),$$
and for $y, t \in \mathbb{R}$,  let us define:
$$\max(y-, t) := y-, \; \min(y-, t) := t$$
for $y > t$, and 
$$\max(y-, t) = t, \; \min(y-, t) = y-$$
for $y \leq t$. Then, we have, for $A > 0$,  

\begin{align*}
\int_{[-A,A]} \frac{d M_0(\rho)}{|\rho|}  
& = \int_{[-A,A]} d M_0(\rho) \int_{\mathbb{R}}  \frac{dx}{x^2} (\mathds{1}_{x \geq \rho \geq 0} + \mathds{1}_{x < \rho < 0} )
\\ & = \int_{\mathbb{R}} \frac{ |M_0 (\min(A,\max((-A)-,x)))|}{x^2} dx
\\ &  \leq A \int_{[-A,A]} \frac{|M_0 (x)|}{|x|^3} dx + \left(\frac{|M_0(A)| + |M_0((-A)-)|}{A} \right)< \infty.
\end{align*}
Since the process $X$ is locally finite, this bound ensures that the function depending on $A$ in the proposition is well-defined. Moreover, we have: 
\begin{align*}
\int_{[-A,A]} \frac{d M_0(\rho)}{\rho}  
& = \int_{[-A,A]} d M_0(\rho) \int_{\mathbb{R}}  \frac{dx}{x^2} (\mathds{1}_{x \geq \rho \geq 0} - \mathds{1}_{x < \rho < 0} )
\\ & = \int_{\mathbb{R}} \frac{ M_0 (\min(A,\max((-A)-,x)))}{x^2} dx,
\end{align*}
and a similar equality for $d M_X$, which gives 
$$\int_{[-A,A]} \frac{d M_0(\rho)}{\rho} - \int_{[-A,A]} \frac{d M_X(\rho)}{\rho} $$ $$
=  \int_{\mathbb{R}} \frac{ M_0 (\min(A,\max((-A)-,x))) - M_X (\min(A,\max((-A)-,x)))}{x^2} dx.$$

The function depending on $A$ in the proposition is then equal to 
$$\exp \left(s \int_{\mathbb{R}} \frac{ M_0 (\min(A,\max((-A)-,x))) - M_X (\min(A,\max((-A)-,x)))}{x^2} dx \right) \dots
$$ $$ \times \prod_{\rho \in X \cap [-A,A]} \left(1- \frac{s}{\rho}  \right) e^{s / \rho}.$$
In the exponential factor, the part of the integral in the interval $[-A,A]$ is
$$\int_{[-A,A]} \frac{M_0(x)-M_X(x)}{x^2} dx \underset{A \rightarrow \infty}{\longrightarrow} \int_{\mathbb{R}} \frac{M_0(x)-M_X(x)}{x^2} dx.$$
by dominated convergence. 
The part of the integral outside $[-A,A]$ is equal to 
$$\frac{M_0(A) - M_X(A)}{A} + \frac{M_0((-A)-) - M_X((-A)-)}{A}.$$
The first term goes to zero when $A \rightarrow \infty$. It is also the case for the second term, since it is 
the left limit at $-A$ of the c\`adl\`ag function
$$y \mapsto \frac{M_0(-y) - M_X(-y)}{y}$$
from $(-\infty, 0)$ to $\mathbb{R}$, 
which tends to zero at $-\infty$ by assumption. 
This completes the proof of the proposition, by the convergence of the   Weierstrass  product. 
\end{proof}

We have now enough material to prove Theorem \ref{main}. 
The almost sure existence of the limits
$$\lim_{A \rightarrow \infty} 
\exp \left( s \int_{[-A,A]} \frac{d M_{0,n}(\rho)}{\rho} \right)\prod_{\rho \in X_n\cap [-A,A]} \left(1- \frac{s}{\rho} \right)$$
and 
 $$ \lim_{A \rightarrow \infty} 
\exp \left( s \int_{[-A,A]} \frac{d M_{0}(\rho)}{\rho} \right)\prod_{\rho \in X \cap [-A,A]} \left(1- \frac{s}{\rho} \right)$$
 and the fact that they are entire functions of $s$, come from the assumptions of Proposition \ref{weierstrass}, which are clearly satisfied for 
each of the point processes $(X_n)_{n \in \mathbb{N}}$, $X$ and the corresponding deterministic measures $(M_{0,n})_{n \in \mathbb{N}}$, $M_0$. 
Since $M_X$ is Radon by assumption, for all $A > 0$, $\varepsilon > 0$, 
there exists $L > 0$ such that $M_X([-A,A]) \leq L$ with probability at least $1 - (\varepsilon/2)$. 
If there exist $m$ points in $[-A,A]$ which are in $X$ with probability at least $\varepsilon$, we get for each of these points $p$: 
$$\mathbb{P} [  p \in X, M_X([-A,A]) \leq L] \geq \mathbb{P} [p \in X] - \mathbb{P} [ M_X([-A,A]) > L] \geq \varepsilon/2,$$
and then 
$$L \geq \mathbb{E} [ M_X([-A,A]) \mathds{1}_{M_X([-A,A]) \leq L} ] \geq m \varepsilon/2,$$
which shows that $m$ is finite. Letting $\varepsilon \rightarrow 0$ and $A \rightarrow \infty$, we deduce that there exist at most countably many points 
which are in $X$ with positive probability. Moreover, $M_0$ has also countably many atoms. A similar reasoning shows the same thing for $X_n$ and $M_{0,n}$. 
Hence, for all $A > 0$, we can choose $A' \in (A,A+1)$ such that $A' \in X$ and $-A' \in X$ with probability zero, for all $n \in \mathbb{N}$, $A' \in X_n$ and  $-A' \in X_n$ with probability zero, and $-A', A'$ are not atoms of $M_0$ or $M_{0,n}$. 
It is enough to show that one can apply Proposition \ref{fnA} to the functions 
$$f_{n,A}(s)  = \exp \left( s \int_{[-A',A']} \frac{d M_{0,n}(\rho)}{\rho} \right)\prod_{\rho \in X_n\cap [-A',A']} \left(1- \frac{s}{\rho} \right),$$
$$f_{\infty,A}(s) = \exp \left( s \int_{[-A',A']} \frac{d M_{0}(\rho)}{\rho} \right)\prod_{\rho \in X \cap [-A',A']} \left(1- \frac{s}{\rho} \right),$$
the functions $f_n$ and $f_{\infty}$ being their corresponding uniform limits on compact sets, which are proven to exist. 
We have $f_{n,A} (0) = 1$, so $f_{n,A}$ is not identically zero. 
An earlier computation gives that 
$$f_{n,A}(s) = \exp \left(s \int_{\mathbb{R}} \frac{ M_{0,n} (\min(A',\max((-A')-,x))) - M_{X_n} (\min(A',\max((-A')-,x)))}{x^2} dx \right) \dots
$$ $$ \times \prod_{\rho \in X_n \cap [-A',A']} \left(1- \frac{s}{\rho}  \right) e^{s / \rho}.$$
The integral in the  exponential factor is 
$$\int_{[-A',A']} \frac{M_{0,n}(x)-M_{X_n}(x)}{x^2} dx 
+ \frac{M_{0,n}(A') - M_{X_n}(A')}{A'} $$ $$+ \frac{M_{0,n}((-A')-) -M_{X_n}((-A')-)}{A'}.$$
which, by assumptions of Theorem \ref{main}, is tight for fixed $A$ and varying $n$: notice that  
we can replace $(-A')-$ by $-A'$ in the last term, because $-A'$ is almost surely not an atom of $M_{0,n}$ or $M_{X_n}$. 
On the other hand, we have, for $|x| \leq 1/2$, 
$$|(1-x) e^x| = \left| \exp\left( - \frac{x^2}{2} - \frac{x^3}{3} - \dots \right) \right| \leq e^{|x|^2}$$
and for $|x| \geq 1/2$, 
$$|(1-x) e^x| \leq (1 + |x|) e^{|x|} \leq e^{2 |x|} \leq e^{4 |x|^2}.$$
Hence, the  Weierstrass product is bounded by 
$$\exp \left( 4 |s|^2 \sum_{\rho \in X_n \cap [-A',A']} \rho^{-2}\right).$$
Let us prove that the last sum is tight with respect to $n$. We have for $L > 0$, 
\begin{align*}
\mathbb{P} \left[  \sum_{\rho \in X_n \cap [-A',A']} \rho^{-2} \geq L^3\right]
& \leq \mathbb{P} \left( \operatorname{Card} (X_n \cap [-1/L, 1/L]  ) \geq 1 \right)
\\ &  + \mathbb{P} \left(  \operatorname{Card} (X_n \cap [-A', A']  ) \geq L \right).
\end{align*}
Since $X_n$ converges to $X$, the upper limit in $n$ of the probability is at most 
$$\mathbb{P} \left( \operatorname{Card} (X \cap [-2/L, 2/L]  ) >0 \right)
  + \mathbb{P} \left(  \operatorname{Card} (X \cap [-2A', 2A'] )> L-1 \right).$$
  Since $X$ has a.s. no point at zero and is locally finite, the two probabilities go to zero when $L$ goes to infinity. Hence, we have the tightness of the sum 
of $\rho^{-2}$, and then the tightness of 
 $(\sup_{K} |f_{n,A}|)_{n \in \mathbb{N}}$ for all compacts sets $K$. 
 
 We also have (when the denominator is non-zero), by using Proposition \ref{weierstrass} and some computations above:
 $$\frac{f_n(s)}{f_{n,A}(s)}
 = \exp\left(s \left( \int_{\mathbb{R} \backslash [-A',A']} \frac{M_{0,n}(x)-M_{X_n}(x)}{x^2} dx 
-  \frac{M_{0,n}(A') - M_{X_n}(A')}{A'}  \right. \right. $$ $$ \left. \left. - \frac{M_{0,n}((-A')-) -M_{X_n}((-A')-)}{A'} \right) \right)
\prod_{\rho \in X_n \cap (\mathbb{R} \backslash [-A',A'])} \left(1- \frac{s}{\rho}  \right) e^{s / \rho}$$
 For $\varepsilon > 0$, and $K$ compact, we can only have $|(f_n/f_{n,A}) - 1| \mathds{1}_{f_{n,A} \neq 0} \geq \varepsilon$ on some point of $K$ if one of the following quantities 
 is larger than $\delta > 0$, depending only on $K$ and $\varepsilon$: 
 $$\int_{\mathbb{R} \backslash [-A',A']} \frac{|M_{0,n}(x)-M_{X_n}(x)|}{x^2} dx, \; 
  \frac{|M_{0,n}(A') - M_{X_n}(A')|}{A'},$$ $$
   \frac{|M_{0,n}((-A')-) -M_{X_n}((-A')-)|}{A'}, \;  \sum_{\rho \in X_n \cap (\mathbb{R} \backslash [-A',A'])} \rho^{-2}.$$
 For the first quantity, we need
 $$\int_{\mathbb{R}} \frac{|M_{0,n}(x)-M_{X_n}(x)| \varphi(|x|)}{x^2} dx \geq \delta \varphi(A').$$
 For the second quantity, we need
 $$ \frac{|M_{0,n}(A') - M_{X_n}(A')|  \varphi(|A'|)}{A'} \geq \delta \varphi(A').$$
 For the third quantity, we need
 $$ \frac{|M_{0,n}(-A') - M_{X_n}(-A')|  \varphi(|A'|)}{A'} \geq \delta \varphi(A'),$$
 notice that we can replace $(-A')-$ by $-A'$ since $M_{0,n}$ and $X_n$ have a.s. no atom at $-A'$. 
 For the fourth quantity, we need
 $$ \int_{\mathbb{R} \backslash [-A',A']} \frac{|M_{X_n}(x)| }{|x|^3} dx \geq \delta/2$$
 and then 
 $$ \int_{\mathbb{R} \backslash [-A',A']} \frac{|M_{0,n}(x)| }{|x|^3} dx \geq \delta/4$$
 or 
 $$ \int_{\mathbb{R} \backslash [-A',A']} \frac{|M_{0,n}(x) - M_{X_n}(x)| }{x^2} dx \geq
 A'  \int_{\mathbb{R} \backslash [-A',A']} \frac{|M_{0,n}(x) - M_{X_n}(x)| }{|x|^3} dx \geq A' \delta/4,$$
 which implies
  $$\int_{\mathbb{R}} \frac{|M_{0,n}(x)| \varphi(|x|)}{|x|^3} \geq \delta \varphi(A')/4$$
  or 
  $$\int_{\mathbb{R} \backslash [-A',A']} \frac{|M_{0,n}(x) - M_{X_n}(x)| \varphi(|x|)}{x^2} \geq A' \delta \varphi(A')/4.$$
 The tightness assumptions and the fact that $\varphi(A')$ goes to infinity with $A$ imply that the probability that $|(f_n/f_{n,A}) - 1| \mathds{1}_{f_{n,A} \neq 0} \geq \varepsilon$ 
 on some point in $K$ tends to zero when $A$ goes to infinity, uniformly in $n$. 
 In order to apply Proposition \ref{fnA}, it remains to show that the finite-dimensional marginals of $f_{n,A}$ converge in law to those of $f_{\infty,A}$. 
 We first have: 
 $$\int_{[-A',A']} \frac{d M_{0,n} (\rho)}{\rho} \underset{n \rightarrow \infty}{\longrightarrow} 
 \int_{[-A',A']} \frac{d M_0 (\rho)}{\rho}.$$
 Indeed, the vague convergence of $M_{0,n}$ towards $M_0$ implies that for all $\varepsilon > 0$, 
 $$\int_{[-A',A']} \frac{d M_{0,n} (\rho) \varphi_{\varepsilon}(|\rho|)}{\rho} \underset{n \rightarrow \infty}{\longrightarrow} 
 \int_{[-A',A']} \frac{d M_0 (\rho) \varphi_{\varepsilon}(|\rho|)}{\rho},$$
 where $\varphi_{\varepsilon}$ is a smooth function with values in $[0,1]$, equal to zero in $[0, \varepsilon/2]$ and $[A'-\varepsilon/2, A']$ and equal to $1$ on $[\varepsilon, A' - \varepsilon]$.
 Now, the change we make in the integrals by introducing $\varphi_{\varepsilon}$ is at most 
 $$\int_{[-A',-A'+ \varepsilon] \cup [-\varepsilon,\varepsilon] \cup  [A'-\varepsilon, A']} \frac{d M_{0,n}(\rho)}{ |\rho|},$$
and the similar integral with $M_0$. 
It suffices to check that the upper limit of the integral with $M_{0,n}$ when $n$ goes to infinity, and the integral with $M_0$,  tend to zero with $\varepsilon$.
For the parts of the integrals on $[-A',-A'+ \varepsilon]$ and $[A'-\varepsilon, A']$, this is directly deduced from the vague convergence of $M_{0,n}$ towards $M_0$, and the fact
that $M_0$ has no atom at $-A'$ or $A'$ by assumption. 
For the integral on $[-\varepsilon, \varepsilon]$, we get 
$$\int_{[-\varepsilon,\varepsilon]} \frac{d M_{0,n}(\rho)}{ |\rho|}
\leq \varepsilon \int_{\mathbb{R}} \frac{d M_{0,n}(\rho)}{ \rho^2}
= 2 \varepsilon \int_{\mathbb{R}} \frac{|M_{0,n}(x)|}{ |x|^3} dx,$$
and a similar inequality for $M_0$, which proves that the integrals tend uniformly to zero with $\varepsilon$. 
The last step consists in proving the convergence in law of the finite-dimensional marginals of 
$$\prod_{\rho \in X_n \cap [-A',A']} \left(1- \frac{s}{\rho} \right)$$
toward those of 
$$\prod_{\rho \in X \cap [-A',A']} \left(1- \frac{s}{\rho} \right).$$
We notice that changing $s$ to $s'$ changes the first product by at most
\begin{align*}
& \sum_{\rho \in X_n \cap [-A',A']} \frac{|s - s'|}{|\rho|} \prod_{\rho' \in X_n \cap [-A',A'], \rho' \neq \rho} \left( 1 + \frac{\max (|s|, |s'|)}{|\rho|} \right)
\\ & \leq |s-s'| L_n \mu_n^{-1} \left( 1 + \frac{\max (|s|, |s'|)}{\mu_n} \right)^{L_n - 1}
\end{align*}
where $L_n$ is the cardinality of $X_n \cap [-A',A']$ and $\mu_n$ is the minimum of $|\rho|$ for $\rho \in X_n \cap [-A',A']$.  
Since $X_n$ tends in law to $X$, and $X$, $X_n$ are a.s. locally finite with no point at zero, we deduce that $(L_n)_{n \in \mathbb{N}}$ and $(\mu_n^{-1})_{n \in \mathbb{N}}$ are tight families of random variables, and 
then for $s, s'$ in a given compact set $K$, 
$$\left| \prod_{\rho \in X_n \cap [-A',A']} \left(1- \frac{s}{\rho} \right) - \prod_{\rho \in X_n \cap [-A',A']} \left(1- \frac{s'}{\rho} \right)  \right| \leq T_{K,n} |s - s'| $$
where $(T_{K,n})_{n \in \mathbb{N}}$ is a tight family of random variables. 
Similarly, we have a finite random variable $T_K$ such that 
$$\left| \prod_{\rho \in X \cap [-A',A']} \left(1- \frac{s}{\rho} \right) - \prod_{\rho \in X \cap [-A',A']} \left(1- \frac{s'}{\rho} \right)  \right| \leq T_{K} |s - s'|.$$
Hence, if $(s_1, \dots, s_p)$ is a finite set of complex numbers, 
it is not difficult to deduce the convergence in law 
$$\left(\prod_{\rho \in X_n \cap [-A',A']} \left(1- \frac{s_j}{\rho} \right) \right)_{1 \leq j \leq p} 
\underset{n \rightarrow \infty}{\longrightarrow}  \left(\prod_{\rho \in X \cap [-A',A']} \left(1- \frac{s_j}{\rho} \right) \right)_{1 \leq j \leq p} $$
as soon as we have, for all sufficiently large integers $k \geq 1$, the similar convergence in law when the $s_j$'s are shifted by (say) $i/k$. 
It is then sufficient to check the convergence in law above for $s_1, \dots, s_p \notin \mathbb{R}$. 
In this case, by taking the logarithm and considering the Fourier transform, and by taking into account the fact that $X$ and $X_n$ have a.s. no point at zero (because of our assumptions),  
 we deduce that it is enough to check, for all $\lambda_1, \dots, \lambda_p, \lambda'_1, \dots, \lambda'_p \in \mathbb{R}$, the convergence in law: 
$$\sum_{\rho \in X_n \cap [-A',A']}  g (\rho)  \underset{n \rightarrow \infty}{\longrightarrow}  \sum_{\rho \in X \cap [-A',A']}  g (\rho),$$
where 
$$g : x \mapsto \sum_{j=1}^p \left( \lambda_j \Re \log \left( 1 - \frac{s_j}{x} \right) + \lambda'_j \Im \log \left( 1 - \frac{s_j}{x} \right)  \right) \mathds{1}_{x \neq 0},$$
and where we take the principal branch of the logarithm for the imaginary part, which is possible since $1 - \frac{s_j}{x} \notin \mathbb{R}$. 
Since $g$ is continuous everywhere except at zero, we deduce that for all $\varepsilon > 0$, there exists a continuous function with compact support $g_{\varepsilon}$, 
equal to $g \mathds{1}_{[-A',A']}$ everywhere except at intervals of length $\varepsilon$ around $-A'$, $0$ and $A'$. 
From the convergence of the point process $X_n$ toward $X$, we know that 
$$\sum_{\rho \in X_n}  g_{\varepsilon} (\rho)   \underset{n \rightarrow \infty}{\longrightarrow}  \sum_{\rho \in X}  g_{\varepsilon} (\rho)$$
in distribution. 
If $X_n$ and $X$ have no point in $[x-\varepsilon/2, x + \varepsilon/2]$ for $x \in \{-A',0,A'\}$, the sums do not change when we replace $g_{\varepsilon}$ by $g$. 
Hence, it is enough to check that the lower limit,  when $n$ goes to infinity, of the probability of this event converges to $1$ when $\varepsilon$ goes to zero. 
This is a direct consequence of the convergence of $X_n$ to $X$ and the fact that the point process $X$ has a.s. no point in $\{-A',0,A'\}$ by the assumptions on $X$ and the choice of $A'$. 

We can now deduce, from Proposition \ref{fnA}, the convergence in law of $$s  \mapsto \lim_{A \rightarrow \infty} 
\exp \left( s \int_{[-A,A]} \frac{d M_{0,n}(\rho)}{\rho} \right)\prod_{\rho \in X_n\cap [-A,A]} \left(1- \frac{s}{\rho} \right)$$
towards 
$$s  \mapsto \lim_{A \rightarrow \infty} 
\exp \left( s \int_{[-A,A]} \frac{d M_{0}(\rho)}{\rho} \right)\prod_{\rho \in X\cap [-A,A]} \left(1- \frac{s}{\rho} \right).$$
 In order to complete the proof of Theorem \ref{main}, It only remains to check that the zeros of these holomorphic functions are exactly given by the sets $X_n$ and $X$. 
 This comes from Proposition \ref{weierstrass}, its proof, and the fact that  the zeros of the 
 Weierstrass products 
 $$\prod_{\rho \in X_n} \left(1- \frac{s}{\rho} \right) e^{s/\rho}, \; \prod_{\rho \in X} \left(1- \frac{s}{\rho} \right) e^{s/\rho}$$
 are respectively given by $X_n$ and $X$, when 
 $$\sum_{\rho \in X_n} \rho^{-2} < \infty, \; \sum_{\rho \in X} \rho^{-2} < \infty.$$
 
 In order to deduce Corollary \ref{coro} from Theorem \ref{main}, we use the fact that  for a holomorphic function $h$ which does not vanish at $s$, 
 $$\frac{h'(s)}{h(s)} = \frac{1}{2 \pi h(s)} \int_0^{2 \pi} h(s +  e^{i \theta}) e^{-i \theta} d \theta. $$
 Hence, a uniform convergence of entire functions on compact sets implies a uniform convergence of their logarithmic derivative on compact sets where the limiting function 
 does not vanish. This gives Corollary \ref{coro}, provided that we check that the logarithmic derivative of 
 $$s  \mapsto \lim_{A \rightarrow \infty} 
\exp \left( s \int_{[-A,A]} \frac{d M_{0,n}(\rho)}{\rho} \right)\prod_{\rho \in X_n\cap [-A,A]} \left(1- \frac{s}{\rho} \right),$$
is equal to 
$$s  \mapsto \lim_{A \rightarrow \infty} 
\left( \int_{[-A,A]} \frac{d M_{0,n}(\rho)}{\rho}  + \sum_{\rho \in X_n\cap [-A,A]} \frac{1}{s-\rho} \right),$$
and that 
the logarithmic derivative of 
$$s  \mapsto \lim_{A \rightarrow \infty} 
\exp \left( s \int_{[-A,A]} \frac{d M_{0}(\rho)}{\rho} \right)\prod_{\rho \in X \cap [-A,A]} \left(1- \frac{s}{\rho} \right),$$
is equal to 
$$s  \mapsto \lim_{A \rightarrow \infty} 
 \left(\int_{[-A,A]} \frac{d M_{0}(\rho)}{\rho}  + \sum_{\rho \in X \cap [-A,A]} \frac{1}{s-\rho} \right).$$
These equalities are obvious if we remove the limit in $A$, and then it is enough to check that we can exchange the logarithmic derivative and the limit $A \rightarrow \infty$. 
This is due to the fact that the convergence stated in Proposition \ref{weierstrass} holds uniformly on compact sets.

\section{Proof of Proposition \ref{crit}} \label{criterion}
Since $|M_{0,n} (x)|$ is equal to zero for $x \in [-1,1]$ and is bounded by $C ( 1 + |x|) |x|^{\nu}$ for $|x| > 1$, the boundedness, with respect to $n$, of 
$$\int_{\mathbb{R}} \frac{|M_{0,n}(x)| \varphi(|x|)}{|x|^3} dx $$
is immediate if we take $\varphi(|x|) = \mathcal{O}(1 + |x|^{(1 - \nu)/2})$. 
Similarly, one gets immediately: 
$$\int_{\mathbb{R}} \frac{|M_{0}(x)| \varphi(|x|)}{|x|^3} dx < \infty. $$
We have $M_{0,n} (x) = 0$ for $x \in [-1,1]$ and then $M_{0} (x) = 0$  for $x \in (-1,1)$ since $M_{0,n}$ converges vaguely to $M_0$. 
Hence, 

$$\int_{[-1,1]} \frac{|M_{0,n} (x) - M_{X_n}(x) | \varphi(|x|)}{x^2} dx
\leq \frac{M_{X_n}([-1,1]) \varphi(1)}{\mu_n^2}$$

$$\int_{(-1,1)} \frac{|M_{0} (x) - M_{X}(x) | }{x^2} dx
\leq \frac{M_{X}((-1,1)) }{\mu^2}$$ 
$$\sup_{x \in \mathbb{R}} \frac{|M_{0,n} (x) - M_{X_n}(x) |\varphi(|x|)}{|x|}
\leq  \frac{M_{X_n}([-1,1])  \varphi(1)}{\mu_n},$$
where $\mu_n$ is the smallest modulus of a point of $X_n$ and $\mu$ is the smallest modulus of a point of $X$.

These variables form a tight family of random variables, since $X_n$ converges in law to $X$ and $X_n$, $X$ are locally finite, with a.s. no point at zero. In particular, the tightness of 
$(\mu_n^{-1})_{n \in \mathbb{N}}$, which are finite random variables,  is deduced from the fact that for $A > 0$, 
$$\underset{n \rightarrow \infty}{\lim \sup} \, \mathbb{P} [ \mu_n^{-1} > A] = 
\underset{n \rightarrow \infty}{\lim \sup}\,  \mathbb{P} [ M_{X_n} ((-1/A, 1/A)) > 0] 
$$ $$\leq \mathbb{P} [ M_{X} ((-2/A, 2/A)) > 0]  \underset{ A \rightarrow \infty}{\longrightarrow}   0. $$

In order to prove the proposition, it is now enough to check that

$$\int_{ \mathbb{R} \backslash (-1,1)} \frac{|M_{0} (x) - M_{X}(x) | \varphi(|x|)}{x^2} dx
< \infty,$$ 

$$ \frac{|M_0 (x) - M_X(x) |}{|x|} \underset{|x| \rightarrow \infty}{\longrightarrow} 0,$$
and that the following families are tight for a suitable choice of $\varphi$: 
$$\left(\int_{\mathbb{R} \backslash [-1,1]} \frac{|M_{0,n} (x) - M_{X_n}(x) | \varphi(|x|)}{x^2} dx \right)_{n \in \mathbb{N}},$$

 $$\left(\sup_{x \in  \mathbb{R} \backslash [-1,1]} \frac{|M_{0,n} (x) - M_{X_n}(x) |\varphi(|x|)}{|x|} \right)_{n \in \mathbb{N}}.$$

We have 
\begin{align*}
\mathbb{E} [ |M_{0,n} (x) - & M_{X_n}(x) | ]  \leq | M_{0,n} (x) - \mathbb{E} [ M_{X_n}(x) ] | + \sqrt{ \operatorname{Var} (M_{X_n}(x)) } 
\\ & \leq C (1 + |x|^{\alpha}) + [C (1 + |x|^{2 \alpha})]^{1/2} \leq 2 (1 + C) (1+|x|^{\alpha})
\end{align*}
which proves the tightness of the family of integrals if $\varphi$ goes to zero sufficiently slowly, because $\alpha < 1$. The finiteness of the integral involving $M_0$ and $M_X$ is proven similarly. Let us now prove the tightness of the supremum: by symmetry on the assumptions, it is enough to consider the supremum on $(1, \infty)$. 
For $K > 0$, $p \geq 1$ integer, and $x \in [p^K, (p+1)^K]$, we observe that 
\begin{align*}
&  M_{0,n} (x) - M_{X_n}(x) \leq M_{0,n} ((p+1)^K) - M_{X_n} (p^K) 
\\ & \leq  M_{0,n} (p^K) - M_{X_n} (p^K) + C (1 + (p+1)^K - p^K)  (p^{K \nu} + (p+1)^{K \nu}),
\end{align*}
and 
\begin{align*}
&  M_{0,n} (x) - M_{X_n}(x) \geq M_{0,n} (p^K) - M_{X_n} ((p+1)^K) 
\\ & \geq  M_{0,n} ((p+1)^K) - M_{X_n} ((p+1)^K) - C (1 + (p+1)^K - p^K)  (p^{K \nu} + (p+1)^{K \nu}),
\end{align*}
Hence, 
\begin{align*} 
& \sup_{x \in  [p^K, (p+1)^K]} \frac{|M_{0,n} (x) - M_{X_n}(x) |\varphi(|x|)}{|x|}
\\ & \leq \frac{\varphi((p+1)^K)}{p^K}  \left( | M_{0,n} (p^K) - M_{X_n} (p^K)|  .+ | M_{0,n} ((p+1)^K) - M_{X_n} ((p+1)^K)| \right)
\\ &  + C \, \frac{\varphi((p+1)^K)}{p^K}  (1 + (p+1)^K - p^K)  (p^{K \nu} + (p+1)^{K \nu})
\end{align*} 
and 
\begin{align*} 
& \sup_{x \in (1, \infty)} \frac{|M_{0,n} (x) - M_{X_n}(x) |\varphi(|x|)}{|x|}
\\ & \leq  \sum_{p=1}^{\infty}  \frac{\varphi((p+1)^K)}{p^K}  \left( | M_{0,n} (p^K) - M_{X_n} (p^K)|  .+ | M_{0,n} ((p+1)^K) - M_{X_n} ((p+1)^K)| \right)
\\ &  + C \, \sup_{p \in \mathbb{N}} \frac{\varphi((p+1)^K)}{p^K}  (1 + (p+1)^K - p^K)  (p^{K \nu} + (p+1)^{K \nu})
\end{align*} 
The second term is independent of $n$, and finite as soon as $K  < 1/\nu$ and $\varphi$ goes to infinity sufficiently slowly. 
The expectation of the sum is bounded by
$$\sum_{p=1}^{\infty}  \frac{\varphi((p+1)^K)}{p^K}   \left( 2 (1+C) (2 + p^{K \alpha} + (p+1)^{K \alpha} )\right). $$
This bound is finite for $\varphi$ going to infinity sufficiently slowly, if 
$$K( \alpha - 1) < -1.$$
We then have the tightness we look for, as soon as we can find $K \in (0,1/\nu)$ satisfying this inequality. 
This occurs for 
$$ (1/\nu) (\alpha - 1) < -1,$$
which is true since we assume $\alpha + \nu < 1$. 
Applying the tightness  just proven to the constant sequence of point processes equal to $X$, which satisfies the assumptions made on $(X_n)_{n \in \mathbb{N}}$ for 
$M_{0,n} = M_0$, we deduce that
$$\sup_{x \in  \mathbb{R}} \frac{|M_{0} (x) - M_{X}(x) |\varphi(|x|)}{|x|}  < \infty$$
for some suitable function $\varphi$. This implies: 
$$ \frac{|M_0 (x) - M_X(x) |}{|x|} \underset{|x| \rightarrow \infty}{\longrightarrow} 0.$$

\section{Proof of Theorem \ref{converse}} \label{proof-converse}
Let $(z_j)_{j \in \mathbb{N}}$ be a dense sequence of distinct elements of $D$, and for all $j \in \mathbb{N}$, let $r_j$ be the minimum of $1$ and the distance between $z_j$ and the complement of $D$: we have $r_j > 0$ since $D$ is open. 
Let $\Phi$ be the  function from $\mathcal{C}(D, \mathbb{C})$ to $\mathbb{C}^{\mathbb{N}} \times \mathbb{R}^{\mathbb{N}}$, 
defined by 
$$\Phi(g) := \left( (g(z_j))_{j \in \mathbb{N}}, \left( \sup_{ m \in \mathbb{N},  |z_m - z_j| \leq r_j/2}  |g(z_m)| \right)_{j \in \mathbb{N}} \right).$$
If $\mathcal{C}(D, \mathbb{C})$ is endowed with the topology of uniform convergence on compact sets, and $\mathbb{C}^{\mathbb{N}} \times \mathbb{R}^{\mathbb{N}}$ is endowed with 
the topology of pointwise convergence, then the function $\Phi$ is continuous. 
Hence, $(\Phi(f_n))_{n \in \mathbb{N}}$ converges to $(\Phi(f))_{n \in \mathbb{N}}$ in distribution. Moreover, the space 
$\mathbb{C}^{\mathbb{N}} \times \mathbb{R}^{\mathbb{N}}$ is separable, since the set of rational couples of sequences with finitely many non-zero components is dense 
for the topology of pointwise convergence. By Skorokhod's representation theorem (see \cite{B99}, Theorem 6.7), there exist random variables 
$h$ and  $(h_n)_{n \in \mathbb{N}}$, with values in  $\mathbb{C}^{\mathbb{N}} \times \mathbb{R}^{\mathbb{N}}$, 
such that $h_n$ has the same law as $\Phi(f_n)$, $h$ has the same law as $\Phi(f)$, and 
$(h_n)_{n \in \mathbb{N}}$ almost surely converges to $h$, for the topology of pointwise convergence. 
Since $f_n$ and $f$ are almost surely continuous, they are almost surely uniformly continuous on the closed disc of center $z_j$ and radius $r_j/2$, for all $j \in \mathbb{N}$. 
Hence, $\Phi(f_n)$ and $\Phi(f)$ are almost surely in the measurable set: 
$$\{ ((u_j)_{j \in \mathbb{N}}, (v_j)_{j \in \mathbb{N}}), \; \forall m \in \mathbb{N}, 
\forall p \in \mathbb{N}, \exists q \in \mathbb{N}, 
\forall (k, \ell) \in  E_{m,q}, |u_{k} - u_{\ell} | \leq 2^{-p} \}$$
where 
$$E_{m,q} := \{ (k, \ell) \in \mathbb{N}^2, |z_{k} - z_m| \leq r_m/2,   |z_{\ell} - z_m| \leq r_m/2, |z_{\ell} - z_{k} | \leq 2^{-q}\}.$$
 Since $h_n$ has the same law as $\Phi(f_n)$ and $h$ has the same law as $\Phi(f)$, 
 $h_n$ and $h$ are almost surely in the measurable set above, which means that the map 
 from $\{(z_j)_{j \in \mathbb{N}}\}$ to $\mathbb{C}$ sending $z_j$ to the $j$-th coordinate of $h_n$ or $h$
 is uniformly continuous on the intersection of $\{(z_j)_{j \in \mathbb{N}}\}$  and the  closed disc of center $z_m$ and radius $r_m/2$, for all $m \in \mathbb{N}$. 
 Hence, for each $n \in \mathbb{N}$, there exists almost surely a unique continuous function $\tilde{f}_n$ on $D$ such that 
 $\tilde{f}_n (z_j)$ is equal to the $j$-th coordinate of $h_n$ for all $j \in \mathbb{N}$, and there exists a similar function $\tilde{f}$ corresponding to $h$. 
 Moreover, by writing the definitions of the suitable limits along a sequence of $j$'s such that $z_j$ tends to $z$, one checks that for all $z \in D$, 
 there exists a measurable function $L_z$ from $\mathbb{C}^{\mathbb{N}} \times \mathbb{R}^{\mathbb{N}}$ to $\mathbb{C}$, equal to the $j$-th coordinate when $z = z_j$, and  such that 
 $$ f_n (z) = L_z (\Phi(f_n)), \; f (z) = L_z (\Phi(f)), \;  \tilde{f}_n (z) = L_z( h_n), \; \tilde{f}(z) = L_z (h).$$
  For $n \in \mathbb{N}$,  $z'_1, \dots, z'_p \in D$, $B_1, \dots, B_p$ Borel sets of $\mathbb{C}$, the event
  $$\{ \forall k \in \{1, \dots, p\},  \tilde{f}_n(z'_k) \in B_k\} = \bigcap_{k=1}^p \{ h_n \in L_{z'_k}^{-1} (B_j) \}$$
  is in the $\sigma$-algebra of the probability space where the random variables $h_n$ and $h$ are defined, and then $\tilde{f}_n$ 
  is a $\mathcal{C}(D, \mathbb{C})$-valued random variable defined on this probability space. Similarly, $\tilde{f}$ is a random variable defined on the same space. 
  Moreover, since $L_{z_j}$ is given by the $j$-th coordinate, and $h_n$ has the same law as $\Phi(f_n)$, 
\begin{align*} 
\mathbb{P} [ \forall j \in \{1, \dots, p\},  \tilde{f}_n(z_j) \in B_j ]  & = \mathbb{P} [ \forall j \in \{1, \dots, p\}, (h_n)_{j} \in B_j ] 
\\ & = \mathbb{P} [ \forall j \in \{1, \dots, p\}, (\Phi(f_n))_{j} \in B_j ]  \\ & = \mathbb{P} [ \forall j \in \{1, \dots, p\}, f_n(z_j) \in B_j ] 
\end{align*}
which shows that $(\tilde{f}_n (z_j))_{1 \leq j \leq p}$ has the same distribution as $(f_n (z_j))_{1 \leq j \leq p}$, and then 
$\tilde{f}_n$ has the same distribution as $f_n$, since by density of $(z_j)_{j \in \mathbb{N}}$ in $D$, the $\sigma$-algebra of $\mathcal{C}(D, \mathbb{C})$ corresponding to the topology 
of uniform convergence on compact subsets of $D$ is generated by the values of the functions at $z_j$ for $j \in \mathbb{N}$. 
Similarly, $\tilde{f}$ has the same law as $f$. In particular, $\tilde{f_n}$ and $\tilde{f}$ are almost surely holomorphic on $D$, since the 
holomorphic functions form a measurable subset of $\mathcal{C}(D, \mathbb{C})$ (a continuous function on $D$ is holomorphic if and only 
if some countable family of contour integrals of this function are equal to zero). 

 On the other hand, by definition of the map $\Phi$, the image of any continuous function by $\Phi$ is in the set 
  $$ H := \left\{ ((u_j)_{j \in \mathbb{N}}, (v_j)_{j \in \mathbb{N}} ), \forall j \in \mathbb{N}, v_j = \sup_{ m \in \mathbb{N},  |z_m - z_j| \leq r_j/2}  |u_m| \right\}.$$
  Since $h_n$ and $h$ are distributed as $\Phi(f_n)$ and $\Phi(f)$, we deduce that $h_n, h \in H$ almost surely. 
  Moreover,  for all $j \in \mathbb{N}$, $\tilde{f}_n (z_j) = L_{z_j} (h_n)$ is the $j$-th coordinate of $h_n$,  and then from $h_n \in H$, we deduce that  
  $h_n = \Phi (\tilde{f}_n )$. Similarly, $h = \Phi(\tilde{f})$. At this stage, we have random continuous functions $\tilde{f}_n$, $\tilde{f}$, almost surely holomorphic, with the same law as $f_n$ and $f$, such that $ h_n = \Phi(\tilde{f}_n)$ converges to $h = \Phi(\tilde{f})$ almost surely, which means that for all $j \in \mathbb{N}$, 
  $$\tilde{f}_n(z_j)  \underset{n \rightarrow \infty}{\longrightarrow} \tilde{f} (z_j)$$
 $$ \sup_{ m \in \mathbb{N},  |z_m - z_j| \leq r_j/2}  |\tilde{f}_n(z_m)|   \underset{n \rightarrow \infty}{\longrightarrow}
  \sup_{ m \in \mathbb{N},  |z_m - z_j| \leq r_j/2}  |\tilde{f}(z_m)|.$$
  By continuity, the last convergence implies that for all $j \in \mathbb{N}$, 
  all the functions $\tilde{f}_n$, $\tilde{f}$ are uniformly bounded on the closed disc of center $z_j$ and radius $r_j/2$, independently of $n$. 
  By Cauchy's formula, $\tilde{f}'_n$, $\tilde{f}'$ are uniformly bounded on the closed disc of center $z_j$ and radius $r_j/3$, which 
  shows that $(\tilde{f}_n)_{n \in \mathbb{N}}$, $\tilde{f}$ are equicontinuous on this disc. We deduce a uniform convergence of 
  $\tilde{f}_n$ to $\tilde{f}$ on this disc, from the convergence of $\tilde{f}_n(z_m)  $ towards $\tilde{f}(z_m)$ for all $m$ and the density of $(z_m)_{m \in \mathbb{N}}$. 
Hence, $\tilde{f}_n$ converges to $\tilde{f}$ uniformly on any compact set $K$ of $D$, since $D$ is  covered by the open discs of center $z_j$ and radius $r_j/3$, 
from which one can extract a finite covering of $K$. 
Therefore, in the statement of Theorem \ref{converse}, we can assume that $f_n$ almost surely converges to $f$, uniformly on compact sets of $D$.

Let $K$ be a compact subset of $D$. Since the zeros of $f$ have no accumulation point, there are finitely many of them in $K$, denoted $z'_1, \dots, z'_p$, with respective multiplicity
$m_1, \dots, m_p$.  For $\varepsilon > 0$ small enough, the closed balls  $\mathcal{B}_1, \dots, \mathcal{B}_p$ centered in $z'_1, \dots, z'_p$, with radius $\varepsilon$, are included in $D$, do not overlap, 
and each of them has only one zero of $f$.
By Cauchy's formula, $f'_n$ converges uniformly to $f'$ on $K$. Moreover, since the only zero of $f$ on $\mathcal{B}_j$ is $z'_j$, we have by compactness that $|f|$ is bounded from below
by a non-zero quantity on the circle  $\mathcal{C}_j$ given by the boundary of  $\mathcal{B}_j$. Hence, $f'_n/f_n$ converges uniformly to $f'/f$ on $\mathcal{C}_j$, which implies, by 
integrating on this circle, that for $n$ large enough, $f_n$ has exactly $m_j$ zeros in $\mathcal{B}_j$, counted with multiplicity. 
Let $K'$ be the compact set obtained from $K$ by removing the open balls $\overset{\circ}{\mathcal{B}}_1, \dots, \overset{\circ}{\mathcal{B}}_p$. 
The function $f$ has no zero on $K'$, and by compactness, $|f|$ is bounded from below by a non-zero quantity on $K'$, which implies that $f_n$ has also no zero on $K'$ for $n$ large enough. 
We have then proven that for $n$ large enough, the multiset of zeros of $f_n$ in $K \cup \mathcal{B}_1 \cup \dots \cup \mathcal{B}_p$, counted with multiplicity, can be obtained from the multiset of  zeros of $f$ in $K$ by moving each point by at most $\varepsilon$. 
Let $g$ be a nonnegative, continuous function on $D$, supported in $K$. 
If $\varepsilon$ is smaller than half the distance $\delta$ from $K$ to the complement of $D$, which is strictly positive, then 
$$K' := \left\{z \in \mathbb{C}, \inf_{z' \in K} |z - z'| \leq \delta/2\right\}$$
is compact, included in $D$, and contains  $K \cup \mathcal{B}_1 \cup \dots \cup \mathcal{B}_p$. 
The function $g$ is uniformly continuous on $K'$, which implies that 
$$\sigma(\varepsilon) := \sup_{z, z' \in K', |z-z'| \leq \varepsilon}  |g(z)- g(z')|$$
tends to zero with $\varepsilon$.  
We have 
$$\left| \sum_{\rho \in K \cup \mathcal{B}_1 \cup \dots \cup \mathcal{B}_p, f_n(\rho) = 0} g(\rho) \mu_{f_n}(\rho) - 
\sum_{\rho \in K, f(\rho) = 0} g(\rho) \mu_{f}(\rho) \right| \leq \sigma( \varepsilon) \sum_{j=1}^p m_j$$
for $n$ large enough. 
Since $g$ is supported in $K$, we deduce 
$$\left| \sum_{ \rho \in D, f_n(\rho) = 0} g(\rho) \mu_{f_n}(\rho) - 
\sum_{\rho \in D, f(\rho) = 0} g(\rho) \mu_{f}(\rho) \right| \leq \sigma( \varepsilon)  \sum_{j=1}^p m_j$$
for $n$ large enough. 
This completes the proof of Theorem \ref{converse}, by letting $\varepsilon \rightarrow 0$.

\section{Examples} \label{examples}

\subsection{Rotationally invariant circular ensembles}

For $n \in \mathbb{N}$, we consider  a random set  $(\lambda^{(n)}_j)_{1 \leq j \leq n}$ of $n$ points on the unit circle, and we define the following polynomial, 
corresponding to the characteristic polynomial of a random matrix whose spectrum is given by $(\lambda^{(n)}_j)_{1 \leq j \leq n}$: 
$$Z_n (x) := \prod_{j=1}^n ( 1 - x/ \lambda^{(n)}_j).$$
We assume that almost surely, $\lambda^{(n)}_j \neq 1$ for all $j$, and we then define, for all $z \in \mathbb{C}$:  
$$\xi_n (s) := Z_n(e^{2 i \pi s /n})/ Z_n(1).$$

It is natural to look for conditions under which the random holomorphic function $\xi_n$ converges in law to a limit when $n$ goes to infinity. 
We have the following: 
\begin{proposition} \label{invariantcircle}
We assume that the set $ \Lambda_n := \{ (\lambda^{(n)}_j)_{1 \leq j \leq n} \}$ is rotationally invariant in distribution, and that the point process of renormalized eigenangles: 
$$ X_n := \{x \in \mathbb{R}, e^{2 i \pi x /n} \in \Lambda_n\}$$
converges in law to a limiting simple point process $X$. Moreover, we assume that the expectation of the number of points of $X$ in a finite interval 
is equal to the length of this interval (or proportional to it), and that  for some $C > 0$, $\alpha \in (0,1)$,  the variance of the number of points of $\Lambda_n$ lying in an arc of length $\theta$ is smaller than $C (1 + (n \theta)^{2 \alpha})$, uniformly in $n$ and $\theta$. Then, $1 \notin \Lambda_n$ almost surely, and
we have the following convergence in law of random holomorphic functions, for the topology of uniform convergence on compact sets: 
$$(\xi_n (s))_{s \in \mathbb{C}} \underset{n \rightarrow \infty}{\longrightarrow } (\xi_{\infty} (s))_{s \in \mathbb{C}} $$
where for $s \in \mathbb{C}$, $\xi_{\infty}(s)$ is  well-defined by the limit: 
$$\xi_{\infty}(s) := e^{i \pi s} \underset{A \rightarrow \infty}{\lim} \prod_{\rho \in X \cap [-A,A]} \left(1 - \frac{s}{\rho} \right).$$
\end{proposition}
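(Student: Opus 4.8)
The plan is to express $\xi_n$, up to the explicit deterministic factor $e^{i\pi s}$, as the random entire function attached by Theorem \ref{main} to the point process $X_n$ of renormalized eigenangles, with reference measures $M_{0,n}=M_0$ equal to the restriction of Lebesgue measure to $\mathbb{R}\setminus[-1,1]$, and then to invoke Theorem \ref{main}, whose hypotheses I will check by means of Proposition \ref{crit}.

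First I would record that $1\notin\Lambda_n$ almost surely: rotational invariance of the law of $\Lambda_n$ forces its intensity measure to be rotation-invariant, hence proportional to arc-length measure, which is atomless; so $\mathbb{E}[\#(\Lambda_n\cap\{1\})]=0$ and $\xi_n$ is almost surely well-defined. The same argument shows that $X_n$ and $X$ have almost surely no point at $0$ (the intensity of $X$ being Lebesgue measure by hypothesis). Next comes the key explicit identity. Writing $\lambda^{(n)}_j=e^{2i\pi x_j/n}$ with $x_j\in(0,n)$, so that $X_n=\{x_j+nk:1\leq j\leq n,\ k\in\mathbb{Z}\}$, one has for each $j$, using $1-e^{2i\pi w}=-2ie^{i\pi w}\sin(\pi w)$ and the Euler product $\sin(\pi z)/(\pi z)=\prod_{k\geq1}(1-z^2/k^2)$,
$$\frac{1-e^{2i\pi s/n}/\lambda^{(n)}_j}{1-1/\lambda^{(n)}_j}=e^{i\pi s/n}\,\frac{\sin(\pi(x_j-s)/n)}{\sin(\pi x_j/n)}=e^{i\pi s/n}\prod_{k\in\mathbb{Z}}\left(1-\frac{s}{x_j+nk}\right),$$
where the last product is taken as a principal value, paired $k\leftrightarrow-k$; note $\sin(\pi x_j/n)\neq0$ since $x_j\in(0,n)$. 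Multiplying these identities over $j=1,\dots,n$ and using $\xi_n(s)=\prod_j(1-e^{2i\pi s/n}/\lambda^{(n)}_j)\big/\prod_j(1-1/\lambda^{(n)}_j)$, the $n$ factors $e^{i\pi s/n}$ combine to $e^{i\pi s}$, so
$$\xi_n(s)=e^{i\pi s}\prod_{\rho\in X_n}\left(1-\frac{s}{\rho}\right)$$
with the same principal-value convention. A routine comparison of cutoffs — the index sets $\{k:|x_j+nk|\leq A\}$ and $\{|k|\leq A/n\}$ differ by $O(1)$ values of $k$, each contributing a term $O(1/A)$ to $\sum 1/\rho$ — shows that this principal value coincides with $\lim_{A\to\infty}\prod_{\rho\in X_n\cap[-A,A]}(1-s/\rho)$.

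Now I choose $M_{0,n}=M_0=\mathds{1}_{\mathbb{R}\setminus[-1,1]}\,dx$. Being odd, this measure gives $\int_{[-A,A]}\rho^{-1}\,dM_{0,n}(\rho)=0$ for every $A>0$, so the random entire function produced by Theorem \ref{main} from $(X_n,M_{0,n})$ is exactly $g_n(s):=\lim_{A\to\infty}\prod_{\rho\in X_n\cap[-A,A]}(1-s/\rho)$, i.e. $\xi_n=e^{i\pi s}g_n$; likewise the limiting function is $g_\infty(s):=\lim_{A\to\infty}\prod_{\rho\in X\cap[-A,A]}(1-s/\rho)$, whence $\xi_\infty=e^{i\pi s}g_\infty$, which in particular shows that the limit defining $\xi_\infty$ exists almost surely. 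It then remains to verify the assumptions on $M_{0,n},M_0$, for which I would apply Proposition \ref{crit}. Vague convergence $M_{0,n}\to M_0$ is immediate (they coincide); $M_{0,n}([-1,1])=0$ by construction; and $|M_{0,n}([x,y])|\leq|y-x|$ gives the interval bound for any $\nu\in(0,1)$. For the centering bound, rotational invariance gives $\mathbb{E}[M_{X_n}(x)]=x$ — an arc of length $\theta$ carries on average $n\theta/(2\pi)$ points of $\Lambda_n$, and $x\mapsto e^{2i\pi x/n}$ scales arc-length by $2\pi/n$ — so $|M_{0,n}(x)-\mathbb{E}[M_{X_n}(x)]|=|M_{0,n}(x)-x|\leq1$, the corresponding bound for $M_0,X$ using the hypothesis $\mathbb{E}[M_X(x)]=x$. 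For the variance bound, since $\Lambda_n$ consists of exactly $n$ points the process $X_n$ is exactly $n$-periodic with $n$ points per period, so for $x=qn+r$ with $r\in[0,n)$ one has $M_{X_n}(x)-M_{X_n}(qn)=\#(\Lambda_n\text{ in an arc of length }2\pi r/n)$, whose variance the hypothesis bounds by $C(1+(2\pi r)^{2\alpha})\leq C'(1+|x|^{2\alpha})$ uniformly in $n$; the same bound for $X$ follows from the uniform-in-$n$ bound together with the convergence in law of $X_n$ to $X$, via a truncated-second-moment (Fatou) argument. Choosing $\nu\in(0,1-\alpha)$, all hypotheses of Proposition \ref{crit}, hence of Theorem \ref{main}, are satisfied.

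Finally, Theorem \ref{main} yields $g_n\to g_\infty$ in law for the topology of uniform convergence on compact sets, with $g_\infty$ almost surely entire and with zero set $X$; since multiplication by the fixed continuous function $s\mapsto e^{i\pi s}$ is continuous on $\mathcal{C}(\mathbb{C},\mathbb{C})$, it follows that $\xi_n=e^{i\pi s}g_n\to e^{i\pi s}g_\infty=\xi_\infty$ in law, which is the assertion of Proposition \ref{invariantcircle}. The main obstacle is the explicit computation of the second step: isolating the deterministic factor $e^{i\pi s}$ correctly and matching the principal-value product coming from the Euler sine expansion with the symmetric cutoff $\lim_{A\to\infty}\prod_{\rho\in X_n\cap[-A,A]}$ used in Theorem \ref{main}; the verification via Proposition \ref{crit} is then bookkeeping, the only slightly delicate point being the passage of the variance bound to the limiting process $X$.
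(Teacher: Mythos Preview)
Your proof is correct and follows essentially the same route as the paper: choose $M_{0,n}=M_0$ to be Lebesgue measure on $\mathbb{R}\setminus[-1,1]$, verify the hypotheses of Proposition~\ref{crit}, and apply Theorem~\ref{main}. The only differences are cosmetic: you derive the identity $\xi_n(s)=e^{i\pi s}\lim_{A\to\infty}\prod_{\rho\in X_n\cap[-A,A]}(1-s/\rho)$ explicitly via the Euler sine product, whereas the paper cites \cite{CNN17}; and your periodicity argument for $\operatorname{Var}(M_{X_n}(x))$ is slightly more careful than the paper's direct appeal to the arc-variance hypothesis, while conversely the paper spells out the Fatou-type transfer of the variance bound to $X$ (arriving at an exponent $\alpha'=\max(1/2,\alpha,2\alpha-1)$) where you only sketch it.
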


\begin{proof}
Since $\Lambda_n$ is rotationally invariant in law and has a given finite number of  points, $1 \notin \Lambda_n$ almost surely and 
$X_n$ is translation invariant in law, with $0 \notin X_n$. This invariance and the fact that $X_n$ has $n$ points in an interval of size $n$ implies that 
$\mathbb{E} [ M_{X_n} (I)] $ is the length of $I$ for any finite interval $I$. By assumption, 
$\mathbb{E} [ M_{X} (I)] $ is also the length of $I$, and then $0 \notin X$ almost surely.

An elementary computation, detailed in 
\cite{CNN17}, gives 
$$\xi_n (s) = e^{i \pi s} \lim_{A \rightarrow \infty} \prod_{\rho \in X_n \cap [-A,A]} \left(1 - \frac{s}{\rho} \right).$$
Hence, it is enough to find measures $M_0$ and $(M_{0,n})_{n \in \mathbb{N}}$ such that Proposition \ref{crit} is satisfied. 
We take $M_{0,n}$ and $M_0$ equal to the Lebesgue measure, restricted to $\mathbb{R} \backslash [-1,1]$. 
The vague convergence and the fact that the measures do not charge $[-1,1]$ hold obviously. The bound on the measure of $[x,y]$ is satisfied for $\nu > 0$ arbitrarily small. 
Moreover, we have 
$$|M_{0,n} (x) - \mathbb{E} [ M_{X_n} (x)] |, \;  |M_{0} (x) - \mathbb{E} [ M_{X} (x)] | \leq 1$$
and by assumption
$$\operatorname{Var} (M_{X_n}(x)) \leq C (1 + (2 \pi |x|)^{2 \alpha})$$
for some $\alpha \in (0,1)$. Taking $\nu$ sufficiently small, we are done, provided that we check that 
$$\operatorname{Var} (M_{X}(x)) = \mathcal{O} ( 1 + |x|^{2 \alpha'})$$
for some $\alpha' \in (0,1)$. 
Now, we have, for $x \geq 0$, 
$$M_X(x) \leq \int_{\mathbb{R}} g_x dM_X$$
where $g_x$ is a continuous function from $\mathbb{R}$ to $[0,1]$, with support included in $[-1, x+1]$, and equal to $1$ on $[0,x]$. 
Hence,
\begin{align*} 
\mathbb{E} [ (M_{X}(x))^2 ] 
& = \underset{ A \rightarrow \infty}{\lim} \mathbb{E} [ \min((M_X(x))^2, A) ] 
\\ &  \leq  \underset{ A \rightarrow \infty}{\lim}  \, \mathbb{E} \left[ \min \left( \left( \int_{\mathbb{R}} g_x dM_X \right)^2, A \right) \right]
\\ & = \underset{ A \rightarrow \infty}{\lim} \,  \underset{n \rightarrow \infty}{\lim} \, \mathbb{E} \left[ \min \left( \left( \int_{\mathbb{R}} g_x dM_{X_n} \right)^2, A \right) \right]
\\ & \leq \underset{n \rightarrow \infty}{\lim \inf} \, \mathbb{E} [ (M_{X_n} ([-1,x+1]))^2]. 
\end{align*}
Now, by triangle inequality, 
$$\left(\mathbb{E} [ (M_{X_n} ([-1,x+1]))^2] \right)^{1/2}
\leq \left(\mathbb{E} [ (M_{X_n} (-1))^2] \right)^{1/2}
+ \left( \mathbb{E} [ ( M_{X_n} (x+1))^2] \right)^{1/2}$$
with $$\mathbb{E} ( M_{X_n} (x+1))^2] = \left(\mathbb{E} [  M_{X_n} (x+1)] \right)^2 + \operatorname{Var} ( M_{X_n} (x+1)) 
=(x+1)^2 + \mathcal{O} ( 1 + x^{2 \alpha}),$$
 $$\mathbb{E} ( M_{X_n} (-1))^2] = \mathcal{O}(1),$$
 uniformly in $n$. 
 Hence
 $$\left(\mathbb{E} [ (M_{X_n} ([-1,x+1]))^2] \right)^{1/2} 
\leq x + \mathcal{O}( 1 + x^{2 \alpha - 1}),$$
and 
$$\mathbb{E} [ (M_{X_n} ([-1,x+1]))^2]
\leq x^2 + \mathcal{O}( 1 + x + x^{2 \alpha} + x^{4 \alpha - 2} )$$
 uniformly in $n$.
 We deduce 
 $$\mathbb{E} [  (M_{X}(x))^2 ] \leq x^2 + \mathcal{O}( 1 + x + x^{2 \alpha} + x^{4 \alpha - 2} ),$$
 and since $\mathbb{E} [M_{X}(x)] = x$ by assumption, 
 $$\operatorname{Var} (M_{X}(x)) =  \mathcal{O}( 1 + x + x^{2 \alpha} + x^{4 \alpha - 2} )$$
 for $x \geq 0$. By symmetry of the setting, we have for all $x \in \mathbb{R}$, 
 $$\operatorname{Var} (M_{X}(x)) =  \mathcal{O}( 1 + |x| + |x|^{2 \alpha} + |x|^{4 \alpha - 2} ),$$
 which gives the desired result by taking 
 $$\alpha' = \max(1/2,\alpha,  2 \alpha - 1)  \in (0,1).$$
\end{proof} 
A particular case where the proposition applies is the following: 
\begin{proposition}
If $\Lambda_n$ is given by $n$ i.i.d. uniform points on the unit circle, and if $X$ is a Poisson point process of intensity $1$, then the convergence given in 
Proposition \ref{invariantcircle} occurs. 
\end{proposition}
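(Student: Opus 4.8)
The plan is to derive the statement directly from Proposition \ref{invariantcircle} by checking its four hypotheses for the binomial model. Write $\lambda^{(n)}_j = e^{i\phi_j}$ with $\phi_1, \dots, \phi_n$ i.i.d.\ uniform on $[0,2\pi)$. Rotational invariance of the law of $\Lambda_n = \{\lambda^{(n)}_1, \dots, \lambda^{(n)}_n\}$ is immediate from the rotation invariance of the uniform law on the circle, and (as in the statement of Proposition \ref{invariantcircle}) $1 \notin \Lambda_n$ almost surely since the $\phi_j$ are a.s.\ nonzero. Setting $\psi_j := n\phi_j/(2\pi)$, which are i.i.d.\ uniform on $[0,n)$, the renormalized point process is $X_n = \bigcup_{j=1}^n (\psi_j + n\mathbb{Z})$, a stationary point process on $\mathbb{R}$ with exactly one point per period of length $n$.

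Two of the required hypotheses come essentially for free. The number of points of $\Lambda_n$ in an arc of length $\theta$ is a $\mathrm{Binomial}(n, \theta/2\pi)$ variable, whose variance equals $n\frac{\theta}{2\pi}\bigl(1 - \frac{\theta}{2\pi}\bigr) \le \frac{n\theta}{2\pi} \le 1 + n\theta$, so the bound $C(1 + (n\theta)^{2\alpha})$ holds with $\alpha = 1/2 \in (0,1)$ and $C = 1$; this is exactly the slack built into Proposition \ref{crit}, where $\alpha$ equal to (or close to) $1/2$ is admissible as long as $\nu$ is taken small. Moreover the limiting process $X$, being Poisson of intensity $1$, is simple and satisfies $\mathbb{E}[\#(X \cap I)] = |I|$ for every bounded interval $I$, which is the remaining ``free'' hypothesis.

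The one substantive step, and the one I expect to need the most care (though it is entirely classical: the ``law of small numbers'' for point processes), is the convergence in law $X_n \to X$ towards the Poisson process of intensity $1$. I would prove it via Laplace functionals: given $g \colon \mathbb{R} \to \mathbb{R}_+$ continuous with support in $[-A,A]$ and $n > 2A$, periodicity lets each $\psi_j$ contribute to the window $[-A,A]$ with position density $1/n$, whence
$$\mathbb{E}\left[ e^{-\int_{\mathbb{R}} g \, dM_{X_n}} \right] = \left(1 - \frac{1}{n}\int_{-A}^{A}\bigl(1 - e^{-g(x)}\bigr)\, dx\right)^{n} \underset{n \to \infty}{\longrightarrow} \exp\left(-\int_{\mathbb{R}}\bigl(1 - e^{-g(x)}\bigr)\, dx\right),$$
the right-hand side being the Laplace functional at $g$ of the Poisson process of intensity $1$. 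Replacing $g$ by $tg$ for $t \ge 0$ then yields convergence in distribution of $\int_{\mathbb{R}} g\, dM_{X_n}$ to $\int_{\mathbb{R}} g\, dM_X$ for every such $g$, which is precisely the mode of convergence of point processes demanded as input by Proposition \ref{invariantcircle}.

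With all the hypotheses verified, Proposition \ref{invariantcircle} applies verbatim and delivers the convergence in law $(\xi_n(s))_{s \in \mathbb{C}} \to (\xi_\infty(s))_{s \in \mathbb{C}}$ for the topology of uniform convergence on compact sets, where $\xi_\infty(s) = e^{i\pi s}\lim_{A \to \infty}\prod_{\rho \in X \cap [-A,A]}(1 - s/\rho)$ is now built from a Poisson point process of intensity $1$ in place of a sine-kernel process. This completes the plan.
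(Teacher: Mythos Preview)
Your proof is correct. The verification of rotational invariance, the variance bound via the binomial law with $\alpha=1/2$, and the intensity condition on the limiting Poisson process all match the paper's argument essentially verbatim.

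The one genuine difference lies in how you establish the convergence $X_n \to X$. The paper invokes an external criterion (Proposition~B.1 of \cite{MNN19}) based on correlation functions: restricting $X_n$ to $(-n/2,n/2]$, it observes that the $r$-point correlation function is the constant $n^{-r}n!/(n-r)!$, which is bounded by $1$ and tends to $1$ pointwise, whence convergence to the Poisson process of intensity~$1$. You instead compute the Laplace functional directly, using periodicity and independence to obtain
\[
\mathbb{E}\bigl[e^{-\int g\,dM_{X_n}}\bigr]=\Bigl(1-\tfrac{1}{n}\int(1-e^{-g})\Bigr)^{n}\longrightarrow \exp\Bigl(-\int(1-e^{-g})\Bigr),
\]
and then recover convergence in law of $\int g\,dM_{X_n}$ from convergence of Laplace transforms. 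Your route is more self-contained, avoiding the external reference; the paper's route is consistent with the correlation-function machinery it deploys elsewhere (e.g.\ in Propositions~\ref{correlations} and~\ref{determinantalcircle}). Both are standard and equally short here.
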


\begin{proof}
The condition on the  expectation of the number of points of $X$ is obviously satisfied. The number of points of $\Lambda_n$ in an arc of length $\theta$ is a binomial 
distribution of parameters $n$ and $\theta/2 \pi$ , which gives the condition on the variance satisfied for $\alpha = 1/2$. It remains to check the convergence of the point 
processes. This can be done by using Proposition B.1 of \cite{MNN19}, applied to the restriction $E_n$ of the process $X_n$ to $(-n/2, n/2]$: this restriction has no 
influence on the convergence of the point processes, since the corresponding  test functions have compact support. 
For all $1 \leq r \leq n$,  the $r$-point correlation  function of $E_n$ is well-defined, and constant on $(-n/2, n/2]^r$: since the number of sequences of $r$ distinct points in $E_n$ is $n!/(n-r)!$, the correlation function is $n^{-r} n!/(n-r)!$ if all the points are in $(-n/2,n/2]$ and zero otherwise. 
Since the correlation function is smaller than $1$ and tends to $1$ pointwise when $r$ is fixed and $n \rightarrow \infty$, we have than $E_n$, and then $X_n$, converges in law to a Poisson point process of intensity $1$ when $n \rightarrow \infty$. 
\end{proof}
For any $\beta > 0$, $n \in \mathbb{N}$,  the Circular $\beta$ Ensemble (C$\beta$E) is given by a random set of $n$ points on the unit circle, 
whose joint probability density function is proportional to the power $\beta$ of the product of all the mutual distances between the points: it is clearly rotationally invariant. 
For $\beta = 2$, the C$\beta$E corresponds to the Circular Unitary Ensemble (CUE), i.e. it has the same law as the spectrum of a Haar-distributed unitary matrix of order $n$. 
In \cite{KS09}, Killip and Stoiciu show that for $\Lambda_n$ given by the C$\beta$E, $X_n$ goes in distribution to a limiting point process $X$, called 
the $\operatorname{Sine}_{\beta}$ process: for $\beta = 2$, $X$ coincides with the determinantal sine-kernel point process. 
We then have the following: 
\begin{proposition} \label{CbetaE}
If $\Lambda_n$ is given by the C$\beta$E, and if $X$ is a $\operatorname{Sine}_{\beta}$ process, then the convergence given in 
Proposition \ref{invariantcircle} occurs. 
\end{proposition}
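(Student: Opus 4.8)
The plan is to verify the hypotheses of Proposition~\ref{invariantcircle} with $\Lambda_n$ equal to the C$\beta$E. Rotational invariance of $\Lambda_n$ in distribution is immediate: its joint density is proportional to a power of the product of the pairwise distances, hence unchanged by a simultaneous rotation of all the points. The convergence in law of the renormalized eigenangle process $X_n$ to $X=\operatorname{Sine}_\beta$ is exactly the theorem of Killip and Stoiciu \cite{KS09} recalled just above, and the simplicity of $X$ is part of that statement. It therefore remains to check that $\mathbb{E}[M_X(I)]$ equals the length of $I$ for every bounded interval $I$, and that for some $C>0$ and some $\alpha\in(0,1)$ the variance of the number $N_n(\theta)$ of points of $\Lambda_n$ in an arc of length $\theta$ is at most $C(1+(n\theta)^{2\alpha})$, uniformly in $n$ and in $\theta$.

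The first point can be recovered from the second (it is also part of the standard construction of $\operatorname{Sine}_\beta$). By rotational invariance of $\Lambda_n$ and the fact that $X_n$ has exactly $n$ points in an interval of length $n$, one has $\mathbb{E}[M_{X_n}(I)]=|I|$ for every bounded interval $I$; a Fatou argument then forces the intensity of $X$ to be at most Lebesgue, hence locally finite, so $X$ has no fixed atoms and $M_{X_n}(I)\to M_X(I)$ in distribution for every $I$ (an interval $I$ of $X_n$ corresponds to an arc of length $2\pi|I|/n$). The variance bound then gives $\sup_n\mathbb{E}[M_{X_n}(I)^2]=\sup_n(|I|^2+\operatorname{Var}(M_{X_n}(I)))<\infty$, so $(M_{X_n}(I))_n$ is uniformly integrable and $\mathbb{E}[M_X(I)]=\lim_n\mathbb{E}[M_{X_n}(I)]=|I|$; in particular $X$ has a.s.\ no point at $0$.

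The heart of the proof is thus the uniform variance estimate for the C$\beta$E. For this I would use the matrix model of Killip and Nenciu, realizing the eigenvalues of $\Lambda_n$ as the zeros of the degree-$n$ CMV polynomial attached to independent Verblunsky coefficients $\alpha_0,\dots,\alpha_{n-1}$ with $\mathbb{E}[|\alpha_k|^2]=O(1/(n-k))$, together with the relative Pr\"ufer phase used by Killip and Stoiciu: writing $\Psi_k(\theta)$ for the difference of the Pr\"ufer phases at the two endpoints of the arc after $k$ steps of the Szeg\H{o} recursion, one has $N_n(\theta)=\tfrac1{2\pi}\Psi_{n-1}(\theta)+O(1)$ and $\Psi_{n-1}(\theta)=n\theta+\sum_k D_k$, where the $D_k$ are martingale differences with respect to $\sigma(\alpha_0,\dots,\alpha_k)$ whose conditional means vanish because $\alpha_k$ is rotationally invariant. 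Since $\operatorname{Var}(\Psi_{n-1}(\theta))=\sum_k\mathbb{E}[D_k^2]$, it suffices to bound this sum. Combining the crude bound $|D_k|\lesssim|\alpha_k|$ with an angle-Lipschitz bound of the form $|D_k|\lesssim\frac{|\alpha_k|}{1-|\alpha_k|}\,|\Psi_{k-1}(\theta)|$ and the identity $\mathbb{E}[\Psi_{k-1}(\theta)^2]=(k\theta)^2+\sum_{j<k}\mathbb{E}[D_j^2]$, one obtains a self-consistent inequality for $V_n:=\sum_k\mathbb{E}[D_k^2]$ that bootstraps to $V_n=O(\log(2+n\theta))$. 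Since $\log(2+x)\le C_\alpha(1+x^{2\alpha})$ for every $\alpha\in(0,1)$, the required bound holds with any such $\alpha$; alternatively one may simply quote the Pr\"ufer-phase concentration estimates already contained in \cite{KS09}, or the known number-variance asymptotics for the C$\beta$E. Once all the hypotheses of Proposition~\ref{invariantcircle} are verified, its conclusion gives the claimed convergence of $\xi_n$ to $\xi_\infty$.

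I expect the main obstacle to be precisely this variance estimate, and in particular making it uniform in the arc length $\theta$ and not only in $n$: the naive martingale bound only gives $\operatorname{Var}(N_n(\theta))=O(\log n)$, which is insufficient when $n\theta$ stays bounded. One must genuinely exploit the fact that the Pr\"ufer-phase increments at angle $\theta$ are small as long as the accumulated relative phase is small, and carry out the bootstrap carefully — in particular treating separately the last finitely many Verblunsky coefficients (how many depending on $\beta$), for which $|\alpha_k|$ is not small and only $|D_k|\lesssim1$ is available.
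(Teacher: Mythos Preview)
Your proposal is correct and follows the same overall plan as the paper: verify the hypotheses of Proposition~\ref{invariantcircle}, using Killip--Stoiciu for the convergence $X_n\Rightarrow\operatorname{Sine}_\beta$ and a logarithmic number-variance bound for the C$\beta$E to get the $\alpha$-condition.

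The differences are tactical rather than structural. For the variance bound, the paper simply cites \cite{NV19} (which gives $\operatorname{Var}(N_n(\theta))=O(\log(2+n\theta))$ directly), whereas you sketch a Pr\"ufer-phase martingale derivation; your outline is reasonable and you correctly identify the uniformity in $\theta$ as the delicate point, but in practice citing \cite{NV19} (or the analogous concentration result) is what makes the argument short and airtight. For the intensity condition $\mathbb{E}[M_X(I)]=|I|$, your route via uniform integrability of $(M_{X_n}(I))_n$ is slightly different from, and arguably cleaner than, the paper's: the paper instead pulls from \cite{NV19} the estimate $\mathbb{E}[(M_X(x)-x)^2]=O(\log(2+x))$ for the limiting process and combines it with translation invariance of $\operatorname{Sine}_\beta$ to force the proportionality constant to be $1$. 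Your argument avoids needing an a priori second-moment estimate on the limit $X$, at the cost of checking that $M_{X_n}(I)\to M_X(I)$ in law (which you correctly justify via absence of fixed atoms).
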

\begin{proof}
The convergence of $X_n$ towards $X$ is proven in \cite{KS09}, and a logarithmic bound of the variance of the number of points of $\Lambda_n$
in an arc is provided in  \cite{NV21}: this shows that we can take $\alpha > 0$ arbitrarily small in Proposition \ref{invariantcircle}. 
In \cite{NV21}, we prove, after suitable rescaling of the point processes by a factor of $2 \pi$, that for $x \geq 0$, 
$$\mathbb{E} [ (M_{X}(x) - x)^2 ] = \mathcal{O}( \log (2 + x)),$$
which implies 
$$\mathbb{E} [ M_X(x) ] =  x + \mathcal{O} \left(\mathbb{E} [ |M_X(x) - x| ]  \right) = x + \mathcal{O}(\sqrt{ \log (2+x)}).$$
The  $\operatorname{Sine}_{\beta}$ process is translation-invariant in distribution, since $\Lambda_n$ is rotationally invariant, and then 
the expectation of the number of points in an interval is proportional to the length of this interval: the coefficient of proportionality is necessarily $1$ by the estimate above:
we have all the ingredients needed to apply Proposition \ref{invariantcircle}. 
\end{proof}
The convergence of holomorphic functions provided by Proposition \ref{CbetaE} has been proven by other methods in \cite{CNN17} for $\beta = 2$, and in 
\cite{VV20} for general $\beta > 0$.

\subsection{Circular ensembles with well-defined correlation functions}

It is still possible go get convergence of the renormalized characteristic polynomial in the case where the point process on the circle is not 
rotationally invariant. If the correlation functions of a point process on the unit circle are normalized with respect to the uniform probability distribution $d \theta/2 \pi$, the following holds: 
\begin{proposition} \label{correlations}
Let us assume, with the notation above,  that the point process $\Lambda_n$ on the unit circle satisfies the following conditions:
\begin{itemize}
\item The exist $C > 0$, $\gamma \in (0,1)$, such that for all $n \geq 1$, the one-point correlation function $\rho^{(n)}_1$ is defined everywhere on the unit circle, and one has for all $\theta \in (-\pi, \pi]$, 
 $$\left| \int_0^{\theta} \left( \rho^{(n)}_1 (e^{i \tau}) - n \right) \frac{d \tau}{2 \pi} \right| \leq C  (1 + (n | \theta|)^{\gamma} ).$$
 \item There exists a finite family of arcs $I_1, \dots, I_k$ covering the unit circle, such that for all $p \in \{1, 2, \dots, k\}$, $n \geq 1$, the two-point correlation function 
 $\rho_2^{(n)}$ is well-defined on $I_p^2$, and for $z_1, z_2 \in I_p$, 
 $$\rho_2^{(n)} (z_1, z_2) \leq \rho_1^{(n)} (z_1)  \rho_1^{(n)} (z_2).$$
\item There exists $\varepsilon \in (0, \pi)$ such that for all  $n \geq 1$, $r \in \{1, 2, \dots, n\}$, the $r$-point correlation function $\rho^{(n)}_r$ of $\Lambda_n$ is well-defined on $J_{\varepsilon}^r$, for  
 $J_{\varepsilon}  := e^{i [-\varepsilon, \varepsilon]}$.  
\item  There exists a function $h$, from $\mathbb{R}$ to $\mathbb{R}_+$, locally integrable, such that for all $r \in \{1, 2, \dots, n\}$, and $x_1, x_2, \dots x_r \in 
K_{n, \varepsilon} := [-n \varepsilon/2 \pi, n \varepsilon/2 \pi]$, 
$$\rho^{(n)}_r(e^{2 i \pi x_1/n}, \dots,  e^{2 i \pi x_r/n}) \leq n^r h(x_1) h(x_2) \dots h(x_r).$$
\item For all $r \geq 1$, there exists a function $\rho_r$
from $\mathbb{R}^r$ to $\mathbb{R}_+$ such that for all $x_1, \dots, x_r \in \mathbb{R}$, 
$$n^{-r} \rho^{(n)}_r(e^{2 i \pi x_1/n}, \dots,  e^{2 i \pi x_r/n}) \underset{n \rightarrow \infty}{\longrightarrow} \rho_r(x_1, \dots, x_r).$$
Then, there exists a point process $X$ on the real line whose $r$-point correlation function $\rho_r$ is well-defined for all $r \geq 1$, 
and the convergence of random holomorphic functions described in 
Proposition \ref{invariantcircle} occurs. 
\end{itemize}

\end{proposition}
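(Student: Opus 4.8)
The plan is to reduce to Theorem~\ref{main} through Proposition~\ref{crit}, exactly as in the rotationally invariant case. Writing $\lambda^{(n)}_j=e^{2i\pi x^{(n)}_j/n}$ with $x^{(n)}_j\in(-n/2,n/2]$, expanding each factor of $\xi_n$ by means of the product formula for the sine and recognising the principal value product over the $n$-periodisation $X_n=\{x^{(n)}_j+kn:\ 1\le j\le n,\ k\in\mathbb{Z}\}$, one obtains the elementary identity (valid for any fixed configuration, hence not requiring invariance; see \cite{CNN17})
\[
\xi_n(s)=e^{i\pi s}\lim_{A\to\infty}\prod_{\rho\in X_n\cap[-A,A]}\Bigl(1-\frac s\rho\Bigr).
\]
Since multiplication by $e^{i\pi s}$ is a continuous self-map of $\mathcal{C}(\mathbb{C},\mathbb{C})$, it therefore suffices to exhibit a simple point process $X$ with $X_n\to X$ in law, to verify the hypotheses of Theorem~\ref{main} for a convenient choice of $M_{0,n},M_0$, and to observe that this choice kills the exponential prefactor.

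\emph{The limiting point process.} Fix $A>0$; for $n$ large, $[-A,A]\subset K_{n,\varepsilon}$ and the $r$-point correlation functions of $X_n\cap[-A,A]$ are $(x_1,\dots,x_r)\mapsto n^{-r}\rho^{(n)}_r(e^{2i\pi x_1/n},\dots,e^{2i\pi x_r/n})$, which converge pointwise to $\rho_r$ and are dominated by $h(x_1)\cdots h(x_r)$ with $h$ locally integrable. Hence the factorial moments of $\#(X_n\cap[-A,A])$, and more generally the joint factorial moments over finitely many disjoint subintervals of $[-A,A]$, converge and are bounded by powers of $\int_{[-A,A]}h$, a bound of Carleman type. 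Applying the criterion of \cite{MNN19}, Proposition~B.1 (used above for the Poisson case) to the restrictions, and letting $A\to\infty$, one obtains a simple, locally finite point process $X$ on $\mathbb{R}$ with correlation functions $\rho_r$, having almost surely no point at $0$ (its intensity $\rho_1$ being a density with respect to Lebesgue measure), and such that $X_n\to X$ in law. Likewise $0\notin X_n$ almost surely, since $\rho^{(n)}_1$ is a density.

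\emph{Verification of Proposition~\ref{crit}.} Take $M_{0,n}=M_0$ equal to Lebesgue measure restricted to $\mathbb{R}\setminus[-1,1]$. Vague convergence and the vanishing on $[-1,1]$ are clear, and $|M_{0,n}([x,y])|\le|y-x|\le(1+|y-x|)(|x|^\nu+|y|^\nu)$ for any $\nu>0$, the right-hand side being at least $1$ precisely on the configurations where the left-hand side can be nonzero (i.e.\ when $[x,y]\not\subset[-1,1]$, which forces $\max(|x|,|y|)>1$). For the intensity comparison, the substitution $\theta=2\pi t/n$ gives $\mathbb{E}[M_{X_n}(x)]-x=\int_0^{2\pi x/n}(\rho^{(n)}_1(e^{i\tau})-n)\,\frac{d\tau}{2\pi}$ for $0\le x\le n/2$, hence $|\mathbb{E}[M_{X_n}(x)]-x|\le C(1+|x|^\gamma)$ by hypothesis; for $n/2\le x\le n$ one uses $\#(X_n\cap[0,x])=n-\#(X_n\cap[x,n))$ together with the $2\pi$-periodicity of $\rho^{(n)}_1(e^{i\cdot})$, for $x>n$ one uses the exact $n$-periodicity of $X_n$ as a set, i.e.\ $M_{X_n}(x)=\lfloor x/n\rfloor n+M_{X_n}(x-\lfloor x/n\rfloor n)$ with deterministic first summand, and one argues symmetrically for $x<0$. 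Thus $|\mathbb{E}[M_{X_n}(x)]-x|\le C'(1+|x|^\gamma)$ uniformly in $n$, whence $|M_{0,n}(x)-\mathbb{E}[M_{X_n}(x)]|\le C''(1+|x|^\gamma)$; the same bound for $M_0$ and $X$ follows by dominated convergence (the relevant integrands being dominated by $h$). For the variances, if $J$ is an interval of length $<n$ whose image in the circle lies in a single arc $I_p$, then the two-point hypothesis gives $\operatorname{Var}(\#(X_n\cap J))\le\mathbb{E}[\#(X_n\cap J)]$; since $I_1,\dots,I_k$ cover the circle, any interval of length $<n$ is a union of at most $k+1$ such intervals, so the Cauchy--Schwarz inequality yields
\[
\operatorname{Var}(M_{X_n}(x))=\operatorname{Var}(M_{X_n}(x-\lfloor x/n\rfloor n))\le(k+1)\,\mathbb{E}[M_{X_n}(x-\lfloor x/n\rfloor n)]\le C(1+|x|),
\]
using the intensity bound just obtained. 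Passing to the limit in the two-point hypothesis near the point $1$ gives $\rho_2\le\rho_1\otimes\rho_1$ on $\mathbb{R}^2$, hence $\operatorname{Var}(M_X(x))\le\mathbb{E}[M_X(x)]\le C(1+|x|)$ as well. All the hypotheses of Proposition~\ref{crit} then hold with $\alpha=\max(1/2,\gamma)\in(0,1)$ and $\nu\in(0,1-\alpha)$.

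\emph{Conclusion.} By Proposition~\ref{crit} the assumptions of Theorem~\ref{main} are met, so $s\mapsto\lim_{A\to\infty}\exp\bigl(s\int_{[-A,A]}dM_{0,n}(\rho)/\rho\bigr)\prod_{\rho\in X_n\cap[-A,A]}(1-s/\rho)$ converges in law, uniformly on compact sets, to the analogous function built from $X$ and $M_0$. Since $M_{0,n}=M_0$ is symmetric about $0$, the integral $\int_{[-A,A]}dM_{0,n}(\rho)/\rho$ vanishes, so this is exactly the convergence of $\lim_{A\to\infty}\prod_{\rho\in X_n\cap[-A,A]}(1-s/\rho)$ towards $\lim_{A\to\infty}\prod_{\rho\in X\cap[-A,A]}(1-s/\rho)$; multiplying by $e^{i\pi s}$ and invoking the identity of the first paragraph gives $\xi_n\to\xi_\infty$, which is the claim. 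The main obstacle is the variance estimate: the two-point hypothesis only controls $\rho^{(n)}_2$ against $\rho^{(n)}_1\otimes\rho^{(n)}_1$ on products of points lying in the \emph{same} arc, so one must patch the bound across the finitely many arcs via Cauchy--Schwarz, and — crucially — one must use the exact $n$-periodicity of $X_n$ to reduce $M_{X_n}(x)$ for arbitrarily large $x$ to its value on a single period; without this reduction the variance bound would deteriorate as $x\to\infty$ and the hypotheses of Proposition~\ref{crit} could not be met.
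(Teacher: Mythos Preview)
Your argument follows essentially the same route as the paper's proof: establish convergence $X_n\to X$ via Proposition~B.1 of \cite{MNN19}, take $M_{0,n}=M_0$ equal to Lebesgue measure on $\mathbb{R}\setminus[-1,1]$, verify Proposition~\ref{crit} using the one-point hypothesis for the mean and the two-point hypothesis plus Cauchy--Schwarz across arcs for the variance, exploit $n$-periodicity to extend to all $x$, and note that the symmetric $M_0$ kills the exponential factor.

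Two minor technical points where the paper is more careful. First, to invoke Proposition~B.1 of \cite{MNN19} the paper does not rely on a Carleman-type bound as you do; it instead precomposes with the increasing bijection $\psi(x)=\int_0^x(1+h(y))\,dy$, under which the rescaled correlation functions become bounded by~$1$ (since $h/(1+h)\le 1$), which is precisely the hypothesis that proposition needs. Your factorial-moment reasoning is sound in spirit, but the citation does not quite match what you assert. Second, you claim $\rho_2\le\rho_1\otimes\rho_1$ on all of $\mathbb{R}^2$; the paper observes that if $1$ happens to be an endpoint of some arc $I_p$, one only inherits the inequality for $x_1,x_2$ of the \emph{same sign}, which still gives $\operatorname{Var}(M_X(x))\le 2\,\mathbb{E}[|M_X(x)|]$ with a harmless extra factor of~$2$. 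Neither point affects the validity of your strategy.
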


\begin{proof}
We first apply Proposition B.1 of \cite{MNN19} to the image $E_n$ of the  restriction of $X_n$ to the interval $[-n \varepsilon/2 \pi, n\varepsilon/2\pi]$, 
by the map $\psi$ from $\mathbb{R}$ to $\mathbb{R}$ given by
$$\psi(x) = \int_0^{x} (1 + h(y)) dy,$$
which is well-defined since $h$ is locally integrable. Notice that $\psi$ is a bijection from $\mathbb{R}$ to $\mathbb{R}$. 
The restriction to $[-n \varepsilon/2 \pi, n\varepsilon/2\pi]$ and the map $\psi$ ensure that the correlation functions of $E_n$ are well-defined and smaller than $1$.
Indeed, if $\tilde{\rho}^{(n)}_r$ denotes the $r$-point correlation function of $E_n$, we have for $t_1, \dots, t_r \in \mathbb{R}$, 
\begin{align*}
& \tilde{\rho}^{(n)}_r (t_1, t_2, \dots, t_r) 
\\ & =  n^{-r} \rho^{(n)}_r (e^{2 i \pi \psi^{\langle - 1 \rangle}(t_1)/n},  \dots, e^{2 i \pi \psi^{\langle - 1 \rangle}(t_r)/n}) \prod_{j=1}^r (1 + h(\psi^{\langle - 1 \rangle}(t_j) ))^{-1}
\\ & \leq  \prod_{j=1}^r  h(\psi^{\langle - 1 \rangle}(t_j) )(1 + h(\psi^{\langle - 1 \rangle}(t_j)))^{-1} \leq 1
\end{align*}  
if $t_1, \dots, t_r \in [\psi(-n \varepsilon/2 \pi), \psi(n \varepsilon/2 \pi)]$
and 
$$\tilde{\rho}^{(n)}_r (t_1, t_2, \dots, t_r)  = 0$$
otherwise. 
The $r$-point correlation functions of $E_n$ converge pointwise 
to $\tilde{\rho}_r$, where 
$$\tilde{\rho}_r(t_1, \dots,  t_r) 
=  \rho_r ( \psi^{\langle - 1 \rangle}(t_1), \dots,  \psi^{\langle - 1 \rangle}(t_r)) \prod_{j=1}^r (1 + h(\psi^{\langle - 1 \rangle}(t_j)))^{-1}.$$
Hence, there exists a point process $E_{\infty}$ such that $E_n$ converges in law to $E_{\infty}$ when $n \rightarrow \infty$, and 
$E_{\infty}$ has $r$-point correlation functions given by $\tilde{\rho}_r$. 
Applying the map $\psi^{\langle -1 \rangle} $, we deduce that the restriction of $X_n$ to $[-n \varepsilon/2 \pi, n\varepsilon/2\pi]$, and then $X_n$ itself, converges 
in law to a point process $X$ with correlation functions $\rho_r$. 

We now apply Proposition \ref{crit} with $M_{0,n}$ and $M_0$ equal to the Lebesgue measure restricted to $\mathbb{R} \backslash [-1,1]$. 
The two first items of Proposition \ref{crit} are immediately satisfied, and the third item is satisfied for $\nu$ arbitrarily small. 
We have, for $n \geq 1$, $x \in (-n/2, n/2]$ 
\begin{align*}
|M_{0,n} (x) - \mathbb{E} [ M_{X_n} (x) ] |
& =  \left| x + \mathcal{O}(1) - \int_0^x n^{-1} \rho^{(n)}_1 (e^{2 i \pi y/ n} )   dy \right| 
\\ & = \mathcal{O}(1) + \left|  \int_0^{2 \pi x/n} n^{-1}  ( \rho^{(n)}_1 (e^{i \tau}) - n)  \,  \left( \frac{n}{2 \pi} \right) d \tau  \right| \\
\\ & = \mathcal{O} \left( 1 + |x|^{ \gamma} \right). 
\end{align*}
The set $X_n$ is $n$-periodic, with exactly $n$ points in each interval of length $n$, and then the estimate above can easily be extended from the interval $(-n/2, n/2]$ to the 
whole real line.

Moreover, for fixed $x \in \mathbb{R}$, we have for $n$ large enough, $|2 \pi x/n| < \pi$ and then 
$$\int_0^{x}  ( n^{-1} \rho^{(n)}_1 (e^{2 i \pi y/n}) - 1) dy 
= \int_0^{2 \pi x/n}  (\rho^{(n)}_1 (e^{\tau}) - 1) \frac{ d \tau}{ 2 \pi} 
= \mathcal{O} ( 1 + |x|^{\gamma}).$$
Moreover, for $n$ large enough, we have $|x| \in [-n \varepsilon/2 \pi, n \varepsilon/2 \pi]$ and then, 
$$n^{-1} \rho^{(n)}_1 (e^{2 i \pi y/n}) \underset{n \rightarrow \infty}{\longrightarrow} \rho_1(y)$$
and
$$n^{-1} \rho^{(n)}_1 (e^{2 i \pi y/n}) \leq h(y).$$
Since $h$ is locally integrable, we can apply dominated convergence and get 
$$ \int_0^x (\rho_1 (y) - 1) dy = \underset{n \rightarrow \infty}{\lim} \, \int_0^{x}  ( n^{-1} \rho^{(n)}_1 (e^{2 i \pi y/n}) - 1) dy 
= \mathcal{O} ( 1 + |x|^{\gamma}).$$
This estimates implies that
$$|M_{0} (x) - \mathbb{E} [ M_{X} (x) ] | = \mathcal{O} \left( 1 + |x|^{ \gamma} \right),$$
which gives the fourth item of Proposition \ref{crit} if $\alpha \in ( \gamma,1) $ and $\nu \in (0, 1 -\alpha)$. 
 
It remains to bound the variance of $M_{X} (x) $ and $M_{X_n}(x)$. 
If for some $p \in \{1, \dots, k\}$, $I$ is a measurable subset of $I_p$ and if $Y$ is the number of points of $\Lambda_n$ in $I$, we have 
$$\mathbb{E} [Y^2] = \int_{I} \rho^{(n)}_1 (z) \mu(dz)  + \int_{I^2} \rho^{(n)}_2 (z_1, z_2) \mu(dz_1) \mu(dz_2)$$
where $\mu$ denotes the uniform probability measure on the unit circle. 
Hence, 
$$\operatorname{Var} (Y) =  \int_{I} \rho^{(n)}_1 (z) \mu(dz)  + 
\int_{I^2} ( \rho^{(n)}_2 (z_1, z_2) -  \rho^{(n)}_1 (z_1)  \rho^{(n)}_1 (z_2)  ) \mu(dz_1) \mu(dz_2),$$
and by the assumption made on $\rho^{(n)}_2$, the second term is nonpositive, which implies 
$$\operatorname{Var} (Y) \leq \mathbb{E} [Y].$$
If $I$ is any measurable subset of the unit circle, and if $Y$ is the number of points of $\lambda_n$ in $I$, we have 
$$\operatorname{Var} (Y)  = \mathbb{E}  \left[ \left( \sum_{p=1}^k ( Y_p - \mathbb{E} [Y_p] ) \right)^{2} \right]$$
where $Y_p$ is the number of points in $I \cap I_p$, and then by Cauchy-Schwarz inequality, 
$$\operatorname{Var} (Y)  \leq  \mathbb{E}  \left[ k \ \sum_{p=1}^k ( Y_p - \mathbb{E} [Y_p] ) ^2 \right]
= k \sum_{p=1}^k \operatorname{Var} (Y_k) \leq k \sum_{p=1}^k \mathbb{E} [Y_k],$$
i.e. 
$$\operatorname{Var} (Y)  \leq k \mathbb{E} [ Y].$$
We then get for $x \in (-n/2, n/2]$, 
$$\operatorname{Var} (M_{X_n}(x)) \leq k \mathbb{E} [ |M_{X_n}(x)|] = k (|x| + \mathcal{O} (  1 + |x|^{ \gamma} )) = \mathcal{O}(1 + |x|)$$
by the bound previously computed on $M_{0} (x) - \mathbb{E} [ M_{X} (x) ] $. 
Since $X_n$ is periodic, the estimate remains true for all $x \in \mathbb{R}$. 
The one and two-point correlation functions of $X$ are pointwise limits of the  correlations functions of $X_n$, which ensures that for $x_1, x_2 \in \mathbb{R}$ of the same sign, 
$$\rho_2 (x_1,x_2) \leq \rho_1 (x_1) \rho_1(x_2).$$
We need $x_1, x_2$ of the same sign in case where $1$ is a boundary of some of the arcs $I_1, \dots, I_k$. 
We deduce, for all $x \in \mathbb{R}$, 
$$\operatorname{Var} (M_{X}(x)) \leq 2 \mathbb{E} [ |M_{X}(x)| ] = \mathcal{O}(1 + |x|).$$
This gives the last item of Proposition \ref{crit} if one takes $\alpha \geq 1/2$. 
\end{proof}

Here is a version of the proposition adapted to the case where $\Lambda_n$ is symmetric with respect to the real axis: 
\begin{proposition} \label{correlations2}
Proposition \ref{correlations} also applies if we modify the assumptions as follows: 
\begin{itemize} 
\item We add the assumption that the points of $\Lambda_n$ are almost surely symmetrically distributed with respect to the real axis. 
\item We take $J_{\varepsilon} = e^{i [0, \varepsilon]}$ instead of $J_{\varepsilon} = e^{i [-\varepsilon, \varepsilon]}$, and 
$K_{n, \varepsilon} = [0, n \varepsilon/2 \pi]$ instead of $K_{n, \varepsilon} = [-n \varepsilon/2 \pi, n \varepsilon/2 \pi]$. 
\item The convergence 
$$n^{-r} \rho^{(n)}_r(e^{2 i \pi x_1/n}, \dots,  e^{2 i \pi x_r/n}) \underset{n \rightarrow \infty}{\longrightarrow} \rho_r(x_1, \dots, x_r)$$
is assumed to occur only for $x_1, \dots, x_r \in \mathbb{R}_+$, instead of $x_1, \dots, x_r \in \mathbb{R}$, and $\rho_r$ is only defined on $\mathbb{R}_+^r$. 
\item The point process $X$ is obtained as the union of a point process $X_+$ on $\mathbb{R}_+$ whose  correlations functions are given by $\rho_r$, 
and the point process $X_-$ obtained by taking the opposites of the points of $X_+$. 
\end{itemize}
\end{proposition}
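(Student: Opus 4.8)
The plan is to reproduce the argument of the proof of Proposition \ref{correlations}, using the symmetry hypothesis to transfer every statement about $X_n$ and $X$ to the positive half-line, where the correlation-function hypotheses now live.

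Since $\Lambda_n$ is almost surely invariant under $z \mapsto \bar{z}$, the point process $X_n$ is almost surely symmetric: $x \in X_n$ if and only if $-x \in X_n$; moreover $0 \notin X_n$ almost surely, since $1 \notin \Lambda_n$ almost surely. I would therefore write $X_n = X_n^+ \cup (-X_n^+)$ with $X_n^+ := X_n \cap (0, \infty)$, and construct the limit in the form $X = X_+ \cup (-X_+)$. To produce $X_+$, one applies Proposition B.1 of \cite{MNN19} exactly as in the proof of Proposition \ref{correlations}, but now to the image $E_n^+$ of the restriction of $X_n$ to $K_{n, \varepsilon} = [0, n \varepsilon/2\pi]$ under the change of variable $\psi(x) = \int_0^x (1 + h(y)) \, dy$, only the values of $h$ on $\mathbb{R}_+$ being needed. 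Because $\rho_r^{(n)}$ is well-defined on $J_\varepsilon^r$ with $J_\varepsilon = e^{i[0,\varepsilon]}$, and $n^{-r} \rho_r^{(n)}(e^{2 i \pi x_1/n}, \dots, e^{2 i \pi x_r/n})$ is dominated by $h(x_1) \cdots h(x_r)$ and converges pointwise to $\rho_r(x_1, \dots, x_r)$ for $x_1, \dots, x_r \in \mathbb{R}_+$, the correlation functions of $E_n^+$ are bounded by $1$ and converge pointwise; Proposition B.1 then yields the convergence in law of $E_n^+$, hence of $X_n^+$, to a point process $X_+$ on $(0,\infty)$ with $r$-point correlation functions $\rho_r$. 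The local integrability of $h$ forces $\mathbb{E}[M_{X_+}([0,x])] \to 0$ as $x \to 0^+$, so $X_+$ is almost surely locally finite with no point at $0$. Finally, the doubling map $Y \mapsto Y \cup (-Y)$ is continuous from locally finite point configurations on $(0,\infty)$ to locally finite point configurations on $\mathbb{R} \setminus \{0\}$ --- a continuous test function $g$ with compact support in $\mathbb{R}\setminus\{0\}$ pulls back to $x \mapsto g(x) + g(-x)$, which has compact support in $(0,\infty)$ --- so by the continuous mapping theorem $X_n \to X := X_+ \cup (-X_+)$ in law, $X$ being almost surely simple, locally finite, with no point at $0$.

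It then remains to verify the hypotheses of Proposition \ref{crit} for $X_n$ and $X$, with $M_{0,n} = M_0$ the Lebesgue measure restricted to $\mathbb{R} \setminus [-1,1]$, which is carried out as in the proof of Proposition \ref{correlations}. The vague convergence, the vanishing on $[-1,1]$, and the bound on $|M_{0,n}([x,y])|$ for small $\nu$ are immediate; the estimates $|M_{0,n}(x) - \mathbb{E}[M_{X_n}(x)]|, |M_{0}(x) - \mathbb{E}[M_{X}(x)]| = \mathcal{O}(1 + |x|^\gamma)$ and the variance bounds $\operatorname{Var}(M_{X_n}(x)), \operatorname{Var}(M_{X}(x)) = \mathcal{O}(1 + |x|)$ follow from the hypotheses on $\rho_1^{(n)}$, $\rho_2^{(n)}$ and the arcs $I_1, \dots, I_k$; the bounds involving $X$ are obtained first for $x \geq 0$, where the limiting correlation functions $\rho_1, \rho_2$ are available, and then extended to $x < 0$ by the symmetry of $X$ (and of $M_0$), the inequality $\rho_2(x_1,x_2) \leq \rho_1(x_1)\rho_1(x_2)$ being needed only for $x_1, x_2$ of the same sign since $\operatorname{Var}(M_X(x))$ is computed by integrating over $[0,x]^2$ or $[-|x|,0]^2$. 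With $\alpha \in (\gamma, 1)$ and $\nu \in (0, 1-\alpha)$, Proposition \ref{crit}, hence Theorem \ref{main}, applies; combined with the elementary identity $\xi_n(s) = e^{i\pi s} \lim_{A\to\infty} \prod_{\rho \in X_n \cap [-A,A]}(1 - s/\rho)$ of \cite{CNN17} (which uses only $1 \notin \Lambda_n$) and the continuity of multiplication by $e^{i\pi s}$, this yields $\xi_n \to \xi_\infty$ in law for the topology of uniform convergence on compact sets, with $\xi_\infty(s) = e^{i\pi s} \lim_{A\to\infty} \prod_{\rho \in X \cap [-A,A]}(1 - s/\rho)$.

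The only genuinely new point --- hence the main obstacle --- lies in the second paragraph: one must verify that knowing the limiting correlation functions only on $\mathbb{R}_+^r$ still determines the limiting point process on the whole line, which is exactly where the symmetry hypothesis and the construction $X = X_+ \cup (-X_+)$ enter, along with the continuity of the doubling map and the absence of a point at $0$. Once this reduction is in place, the rest is a routine ``even in $x$'' rereading of the proof of Proposition \ref{correlations}.
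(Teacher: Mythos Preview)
Your proposal is correct and follows essentially the same approach as the paper: apply Proposition~B.1 of \cite{MNN19} to the $\psi$-image of $X_n \cap [0,n\varepsilon/2\pi]$ to get $X_n^+ \to X_+$, lift to the full line via symmetry to obtain $X_n \to X_+ \cup (-X_+)$, and then verify Proposition~\ref{crit} exactly as in the proof of Proposition~\ref{correlations}. The paper's proof is terse (it says only ``exactly in the same way''), whereas you have spelled out the continuity of the doubling map, the absence of a point at $0$, and the fact that the inequality $\rho_2 \le \rho_1 \otimes \rho_1$ is needed only for arguments of the same sign --- these are precisely the small checks the paper omits.
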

\begin{proof}
We first apply Proposition B.1 of \cite{MNN19} to the image $E_n$ of the  restriction of $X_n$ to the interval $[0, n\varepsilon/2\pi]$, 
by the map $\psi$ from $\mathbb{R}$ to $\mathbb{R}$ given by
$$\psi(x) = \int_0^{x} (1 + h(y)) dy.$$
In this way, we prove that the restriction of $X_n$ to $\mathbb{R}_+$ converges to a process $X_+$ on $\mathbb{R}_+$ whose  correlations functions are given by $\rho_r$.
Since $\Lambda_n$ is symmetric with respect to the real axis, $X_n$ converges to $X = X_+ \cup X_-$ where $X_-$ is the image of $X_+$ by $x \mapsto -x$. 
The conditions given in Proposition \ref{crit} are then proven exactly in the same way as in the proof of Propositon \ref{correlations}. 
\end{proof}

\subsection{Determinantal ensembles on the unit circle}
A particular case of application of Propositions \ref{correlations} and \ref{correlations2} is given by the following result: 

\begin{proposition} \label{determinantalcircle}
Let us assume  that the point process $\Lambda_n$ on the unit circle satisfies the following conditions:
\begin{itemize}
\item The exist $C > 0$, $\gamma \in (0,1)$, such that for all $n \geq 1$, the one-point correlation function $\rho^{(n)}_1$ is defined everywhere on the unit circle, and one has for all $\theta \in (-\pi, \pi]$, 
 $$\left| \int_0^{\theta} \left( \rho^{(n)}_1 (e^{i \tau}) - n \right) \frac{d \tau}{2 \pi} \right| \leq C  (1 + (n | \theta|)^{\gamma} ).$$
 \item There exists $\varepsilon> 0$, and a function $h$ from $\mathbb{R}$ to $\mathbb{R}_+$, locally integrable, such that 
 for all $x \in [- n \varepsilon/2 \pi, n \varepsilon/2 \pi]$, 
 $$\rho^{(n)}_1 (e^{2 i \pi x/n}) \leq n h(x).$$ 
  \item There exists a finite family of arcs $I_1, \dots, I_k$ covering the unit circle, such that for all $p \in \{1, 2, \dots, k\}$, $n \geq 1$, the following holds: there exists a kernel $K^{(n)}_p$, given by a measurable function from $I_p^2$ to $\mathbb{C}$,  
 such that $K^{(n)}_p(z_1, z_2) = \overline{K^{(n)}_p(z_2,z_1)}$ for all $z_1,  z_2 \in I_p$ (i.e. $K$ is Hermitian), 
for all $z_1, z_2, \dots, z_r \in I_p$, 
 $$\rho^{(r)}_n (z_1, \dots, z_r) := \operatorname{det} \left( (K^{(n)}_p(z_a, z_b))_{1 \leq a, b \leq r} \right)$$
 is non-negative, and the restriction of $\Lambda_n$ to $I_p$ admits $\rho^{(r)}_n$ as its $r$-point correlations function (i.e. $\Lambda_n \cap I_p$ is a determinantal point process of kernel $K^{(n)}_p$).
 \item Either $1$ is in the interior of  the arc $I_1$ (case 1), or $I_1 = e^{i[0, \theta]}$ for some $\theta \in (0, \pi]$ and for all $n \geq 1$, the points of $\Lambda_n$ are almost surely symmetrically distributed with respect to the real axis (case 2).
\item There exists a function $K$, from $\mathbb{R}^2$ to $\mathbb{C}$ in case 1, and from $\mathbb{R}_+^2$ to $\mathbb{C}$
in case 2, such that 
$$n^{-1} K^{(n)}_1 (e^{2 i \pi x/n}, e^{2 i \pi y/n}) \underset{ n \rightarrow \infty}{\longrightarrow} K(x,y)$$
for all $x, y \in \mathbb{R}$ in case 1,  and for all $x, y \in \mathbb{R}_+$ in case 2. 
\end{itemize}
Then, in case 1, there exists a determinantal point process $X$ on the real line with kernel $K$, and in case 2, there exists 
a determinantal point process $X_+$ on $\mathbb{R}_+$ with kernel $K$. If, in case 2, we define $X$ as the union of the points of $X_+$ and their opposites, 
then  the convergence of random holomorphic functions described in 
Proposition \ref{invariantcircle} occurs. 
\end{proposition}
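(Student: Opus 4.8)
The plan is to verify that the determinantal hypotheses of Proposition~\ref{determinantalcircle} imply all the assumptions of Proposition~\ref{correlations} in case 1, and of Proposition~\ref{correlations2} in case 2, so that the conclusion follows at once from those two results. The bound on $\int_0^\theta(\rho^{(n)}_1(e^{i\tau})-n)\,d\tau/2\pi$ with exponent $\gamma\in(0,1)$ and the existence of the covering arcs $I_1,\dots,I_k$ are taken verbatim. I would then fix $\varepsilon>0$ small enough that $J_\varepsilon=e^{i[-\varepsilon,\varepsilon]}$ (case 1) or $J_\varepsilon=e^{i[0,\varepsilon]}$ (case 2) is contained in $I_1$ and that the domination $\rho^{(n)}_1(e^{2i\pi x/n})\le n\,h(x)$ holds on $K_{n,\varepsilon}$; such an $\varepsilon$ exists because $1$ is interior to $I_1$ in case 1, and $I_1=e^{i[0,\theta]}$ with $\theta>0$ in case 2. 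On $J_\varepsilon\subset I_1$ the process $\Lambda_n$ is locally determinantal with the Hermitian kernel $K^{(n)}_1$, so all its correlation functions are well-defined and nonnegative there, and the diagonal of $K^{(n)}_1$ equals $\rho^{(n)}_1$ on $I_1$.

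The two-point comparison required in Proposition~\ref{correlations} comes from the $2\times 2$ determinantal formula: for $z_1,z_2\in I_p$,
\[
\rho^{(n)}_2(z_1,z_2)=K^{(n)}_p(z_1,z_1)\,K^{(n)}_p(z_2,z_2)-\big|K^{(n)}_p(z_1,z_2)\big|^2\le\rho^{(n)}_1(z_1)\,\rho^{(n)}_1(z_2),
\]
using that $K^{(n)}_p$ is Hermitian so that $K^{(n)}_p(z_1,z_2)K^{(n)}_p(z_2,z_1)=|K^{(n)}_p(z_1,z_2)|^2$; in case 2 the analogous inequality $\rho_2(x_1,x_2)\le\rho_1(x_1)\rho_1(x_2)$ for the limiting process is recovered by letting $n\to\infty$ for $x_1,x_2$ of the same sign, exactly as in Proposition~\ref{correlations}.

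For the domination of the rescaled $r$-point functions I would invoke Hadamard's inequality for the determinant of a positive semidefinite Hermitian matrix: since the matrices $\big(K^{(n)}_1(e^{2i\pi x_a/n},e^{2i\pi x_b/n})\big)_{1\le a,b\le r}$ are positive semidefinite (all their submatrix determinants are correlation functions, hence nonnegative), one gets, for $x_1,\dots,x_r\in K_{n,\varepsilon}$,
\[
\rho^{(n)}_r\big(e^{2i\pi x_1/n},\dots,e^{2i\pi x_r/n}\big)\le\prod_{j=1}^r K^{(n)}_1\big(e^{2i\pi x_j/n},e^{2i\pi x_j/n}\big)=\prod_{j=1}^r\rho^{(n)}_1\big(e^{2i\pi x_j/n}\big)\le n^r\prod_{j=1}^r h(x_j),
\]
with $h$ locally integrable, which is precisely the domination hypothesis. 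For the required convergence I would use
\[
n^{-r}\rho^{(n)}_r\big(e^{2i\pi x_1/n},\dots,e^{2i\pi x_r/n}\big)=\det\Big(n^{-1}K^{(n)}_1\big(e^{2i\pi x_a/n},e^{2i\pi x_b/n}\big)\Big)_{1\le a,b\le r},
\]
and conclude, by continuity of the determinant and the assumed convergence $n^{-1}K^{(n)}_1(e^{2i\pi x/n},e^{2i\pi y/n})\to K(x,y)$, that this tends to $\det\big(K(x_a,x_b)\big)_{1\le a,b\le r}=:\rho_r(x_1,\dots,x_r)$, for all $x_1,\dots,x_r\in\mathbb{R}$ in case 1 and for $x_1,\dots,x_r\in\mathbb{R}_+$ in case 2.

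With all hypotheses checked, Proposition~\ref{correlations} (case 1), respectively Proposition~\ref{correlations2} (case 2), produces a limiting point process $X$ (respectively $X_+$ on $\mathbb{R}_+$, whence $X=X_+\cup(-X_+)$) with $r$-point correlation functions $\rho_r$, together with the announced convergence of the random holomorphic functions for the topology of uniform convergence on compact sets. Since $\rho_r(x_1,\dots,x_r)=\det\big(K(x_a,x_b)\big)_{1\le a,b\le r}$, the process $X$ (respectively $X_+$) is by definition determinantal with kernel $K$, which completes the argument. \textbf{The step requiring the most care} is the domination item: one must confirm that the matrices in play are genuinely positive semidefinite before applying Hadamard's inequality, and, in case 2, carry the restriction to $\mathbb{R}_+$ consistently through all the estimates; the remainder is a direct translation of the present hypotheses into those of Propositions~\ref{correlations} and~\ref{correlations2}.
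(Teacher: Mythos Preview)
Your proposal is correct and follows essentially the same route as the paper: both reduce to verifying the hypotheses of Propositions~\ref{correlations} and~\ref{correlations2}, and the key inequality $\rho^{(r)}_n(z_1,\dots,z_r)\le\prod_j\rho^{(1)}_n(z_j)$ is the heart of the matter in each. The only cosmetic difference is that you invoke Hadamard's inequality for positive semidefinite Hermitian matrices by name (after observing that all principal minors are nonnegative correlation functions), whereas the paper effectively reproves Hadamard via a $\delta$-regularized normalization and the AM--GM inequality on the eigenvalues; the content is identical.
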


\begin{proof}
We have to check the assumptions of Proposition \ref{correlations} in case 1 and Proposition \ref{correlations2} in case 2. 
The existence and estimate of the one-point correlation functions are already assumed. For $p \in \{1,2, \dots, k\}$, $z_1, z_2, \dots z_r \in I_p$, $\delta \geq 0$
the matrix 
$$\left( \frac{\delta \mathds{1}_{a = b}  + K^{(n)}_p(z_a, z_b)} { \sqrt{ (\delta +  K^{(n)}_p(z_a, z_a) ) (\delta + K^{(n)}_p(z_b, z_b))}  }\right)_{1 \leq a, b \leq r}$$ 
is Hermitian, and positive semi-definite since all its principal minors are given by linear combinations, with positive coefficients, of correlation functions of order smaller than or equal to $r$. The trace of this matrix is $r$, and then the product of its eigenvalues, which are non-negative, is at most one, by inequality of arithmetic and geometric means. 
Hence, 
$$\operatorname{det} \left( \left( \frac{\delta \mathds{1}_{a = b}  + K^{(n)}_p(z_a, z_b)} { \sqrt{ (\delta +  K^{(n)}_p(z_a, z_a) ) (\delta + K^{(n)}_p(z_b, z_b))}  }\right)_{1 \leq a, b \leq r} \right) \leq 1,$$
$$\operatorname{det} \left( \left( \delta \mathds{1}_{a = b}  + K^{(n)}_p(z_a, z_b)  \right)_{1 \leq a, b \leq r} \right) \leq \prod_{j=1}^r (\delta +  K^{(n)}_p(z_j, z_j))$$
and by letting $\delta \rightarrow 0$, 
$$\rho^{(r)}_n (z_1, \dots, z_r) \leq  \prod_{j=1}^r \rho^{(1)}_n (z_j).$$
For $r = 2$, this gives the second item of Proposition \ref{correlations}. The existence of the $r$-point correlations function on $I_1^r$ implies its existence 
on $J_{\varepsilon}^r$ for $\varepsilon > 0$ small enough, in both cases 1 and 2. 
For $\varepsilon> 0$ small enough, and $x_1, \dots, x_r \in [- n \varepsilon/2 \pi, n \varepsilon/2 \pi]$ in case 1, 
$x_1, \dots, x_r \in [0, n \varepsilon/2 \pi]$ in case 2, we have 
 $e^{2 i \pi x_1}, \dots, e^{2 i \pi x_r} \in I_1$, and 
 $$\rho^{(r)}_n (e^{2 i \pi x_1}, \dots, e^{2 i \pi x_r}) \leq  \prod_{j=1}^r \rho^{(1)}_n (e^{2 i \pi x_j}) \leq n^r h(x_1) \dots h(x_r)$$
 by the inequality proven just above and the assumptions made on $ \rho^{(1)}_n$. 
 For $x_1, \dots, x_r \in \mathbb{R}$ in case 1, $x_1, \dots, x_r \in \mathbb{R}_+$ in case 2, we have 
 $$n^{-r} \rho^{(r)}_n (e^{2 i \pi x_1}, \dots, e^{2 i \pi x_r})
 \underset{n \rightarrow \infty}{\longrightarrow} \rho_r(x_1, \dots, x_r)$$
 where 
 $$\rho_r (x_1, \dots, x_r) = \operatorname{det} \left( (K(x_a, x_b))_{1 \leq a, b \leq r} \right).$$
 We have now all the assumptions needed in order to apply Proposition \ref{correlations} or Proposition \ref{correlations2}. 

\end{proof}
Proposition \ref{determinantalcircle} immediately applies to the Circular Unitary Ensemble. 
One can now consider the eigenvalues of a uniform matrix on the  special orthogonal group $SO(n)$ ($n \geq 2$), multiplied a given element $u$ of the unit circle.  

This example is already studied in  \cite{CHNNR19} when $u \notin \{-1,1\}$. 
The points are symmetrically distributed with respect to the straight line between $-u$ and $u$, which divides the circle into two arcs $I$ and $I'$ of length $\pi$, namely
$I = u e^{ i [ 0, \pi]}$ and $I' = u e^{i [-\pi, 0]}$. If we remove the point $u$ for $n$ even, we get
that on $I$, the point process we consider is determinantal with kernel
$$K^{(n)}(e^{i \theta_1}, e^{i \theta_2} ) = \frac{\sin((n-1) (\theta_2 - \theta_1)/2)}{ \sin( (\theta_2 - \theta_1)/2)} 
+  (-1)^n \, \frac{\sin((n-1) (\theta_2 + \theta_1 - 2 \psi)/2)}{ \sin( (\theta_2 + \theta_1 -2 \psi)/2)} $$
where $\psi \in (-\pi, \pi]$ is the argument of $u$, and the arguments $\theta_1, \theta_2$ are taken in the interval $[\psi, \psi + \pi]$. 
In particular, the one-point correlation function is well-defined, and given by
$$\rho^{(n)}_1 (e^{i \theta}) = 
n- 1 + (-1)^n \frac{\sin((n-1) (\theta -  \psi))}{ \sin( \theta - \psi)} $$
for $\theta \in [\psi, \psi +\pi]$, and then for all $\theta \in \mathbb{R}$ by symmetry of the setting. 
We have 
\begin{align*}
\left| \rho^{(n)}_1 (e^{ i \theta}) - n \right| & \leq 1  +  \min( n -1, |\sin (\theta - \psi)|^{-1})
\\ & \leq 1 + \min(n, (\pi/2) (\operatorname{dist} (\theta, \psi + \pi \mathbb{Z}))^{-1})
\\ & = \mathcal{O} \left( 1 + \mathds{1}_{\theta \in \psi + \pi \mathbb{Z} + [-1/n, 1/n]} 
+ (\operatorname{dist} (\theta, \psi + \pi \mathbb{Z})) + (1/n))^{-1} \right).
\end{align*}
If $\theta \in (\pi, \pi]$, we have $\theta - \psi \in (- 2 \pi, 2 \pi)$, and then the minimum of $|\theta - \psi - \pi k|$ for $k  \in \mathbb{Z}$ is achieved 
for $k \in \{-2,-1,0, 1, 2 \}$. Hence, 
$$\left| \rho^{(n)}_1 (e^{ i \theta}) - n \right|  = \mathcal{O} \left( 1 + \sum_{k= -2}^2 
\left( \mathds{1}_{\theta \in \psi + \pi k + [-1/n, 1/n]}  + (|\theta - \psi - k \pi | + (1/n))^{-1} \right) \right).$$
The integral of $x \mapsto (|x| + (1/n))^{-1}$ on an interval of length $y$ is at most
$$\int_{-y/2}^{y/2}   (|x| + (1/n))^{-1} dx = 2 \int_0^{y/2} (x + (1/n))^{-1} dx = 2 \log (1 + n y/2) \leq 2 \log (1+ny),$$
and then for $\tau \in (-\pi, \pi]$, 
$$\left| \int_0^{\theta} \left(  \rho^{(n)}_1 (e^{ i \tau}) - n  \right) \,   \frac{d \tau}{2 \pi} \right|
= \mathcal{O}( \log (2 + n |\theta|).$$
Since $n^{-1} \rho^{(n)}_1$ is uniformly bounded, we have all the estimates on the one-point correlation functions  which are needed in Proposition \ref{determinantalcircle}.
The condition on the determinantal structure of the process is satisfied by taking $I_1 = I$, $I_2 = I'$ for $\psi \in (-\pi,0]$ and $I_1 = I'$, $I_2 = I$ for $\psi \in (0, \pi]$: 
for $\psi \notin \{0, \pi\}$, $1$ in inside the arc $I_1$, for $\psi \in \{0, \pi\}$, $I_1$ is the arc $e^{i [0, \pi]}$, and the points of $\Lambda_n$ are symmetrically distributed with respect to the real axis. 
It only remains to prove the pointwise convergence of the kernel. 
For $u \notin \{-1,1\}$ and then $\psi \notin \{0, \pi\}$, we have for all $x, y \in \mathbb{R}$, 
$$ K^{(n)}_1(e^{2 i \pi x/n}, e^{2 i \pi y/n} )
= \frac{\sin((n-1) (\theta_2 - \theta_1)/2)}{ \sin( (\theta_2 - \theta_1)/2)} 
 + \mathcal{O} ( |\sin(\psi)|^{-1})$$
 if $n$ is large enough.
 Hence, $n^{-1} K^{(n)}_1(e^{2 i \pi x/n}, e^{2 i \pi y/n} )$ converges pointwise to the sine-kernel, and then Proposition \ref{determinantalcircle} applies, with $X$ equal to 
 the determinantal sine-kernel process. 
 In other words, we recover the result proven in \cite{CHNNR19}. Notice that the fact that we have removed the point $u \neq 1$ in $\Lambda_n$ has no influence on the convergence of the renormalized characteristic polynomial, since it only removes a factor of  $ (1- u^{-1} e^{2 i \pi z/n})/(1-u^{-1})$, which tends to $1$ uniformly on compact sets. 
 
 If $u =  1$, it is necessary to remove $1$ from the point process in order to avoid that $Z_n(1) = 0$. 
 In this case, we have $\psi = 0$ and for $x, y \in \mathbb{R}_+$, 
 $$ n^{-1} K^{(n)}_1(e^{2 i \pi x/n}, e^{2 i \pi y/n} ) \underset{n \; \operatorname{even}, \; n  \rightarrow \infty}{\longrightarrow} 
 \frac{\sin((\pi (x-y))}{ \pi(x-y)}  + \frac{\sin((\pi (x+y))}{ \pi(x+y)},$$
  $$ n^{-1} K^{(n)}_1(e^{2 i \pi x/n}, e^{2 i \pi y/n} ) \underset{n \; \operatorname{odd}, \; n  \rightarrow \infty}{\longrightarrow} 
 \frac{\sin((\pi (x-y))}{ \pi(x-y)}  - \frac{\sin((\pi (x+y))}{ \pi(x+y)},$$
 if $n$ is large enough. 
Hence, we have convergence of the renormalized characteristic polynomial, after restricting to even or odd dimensions. The limiting point process $X_+$ is determinantal, and 
its kernel is given by one of the two limits given just above. 

If $u = -1$, and then $\psi = \pi$, we have that $I = I_2$, and then for $\theta_1, \theta_2 \in [\pi, 2 \pi]$, 
$$K^{(n)}_2(e^{i \theta_1}, e^{i \theta_2} ) = \frac{\sin((n-1) (\theta_2 - \theta_1)/2)}{ \sin( (\theta_2 - \theta_1)/2)} 
+  (-1)^n \, \frac{\sin((n-1) (\theta_2 + \theta_1 - 2 \pi)/2)}{ \sin( (\theta_2 + \theta_1 -2 \pi)/2)},$$
The point process we consider is symmetric with respect to the real axis, and then for $\theta_1, \theta_2 \in [0, \pi]$, 
\begin{align*}
K^{(n)}_1(e^{i \theta_1}, e^{i \theta_2} )
& = K^{(n)}_2(e^{i (2 \pi - \theta_1)}, e^{i  ( 2 \pi - \theta_2)} ))
\\ & =  \frac{\sin((n-1) (\theta_1 - \theta_2)/2)}{ \sin( (\theta_1 - \theta_2)/2)} 
+  (-1)^n \, \frac{\sin((n-1) (2 \pi - \theta_2 - \theta_1)/2)}{ \sin( (2 \pi - \theta_2 - \theta_1)/2)}
\\ & \frac{\sin((n-1) (\theta_1 - \theta_2)/2)}{ \sin( (\theta_1 - \theta_2)/2)} 
+  \, \frac{\sin((n-1) (\theta_1 + \theta_1)/2)}{ \sin( (\theta_2 + \theta_1)/2)}
\end{align*} 
In this case, we have convergence of the holomorphic function without restricting to even or odd dimensions. The limiting point process $X_+$ is 
determinantal with kernel  
$$K(x,y) = \frac{\sin((\pi (x-y))}{ \pi(x-y)}  + \frac{\sin((\pi (x+y))}{ \pi(x+y)}$$
which corresponds to the same limit as what we obtain for $u = 1$ in even dimensions.

The same discussion can be done when the orthogonal group is replaced by the symplectic group $\operatorname{Sp} (2n)$. In the case, the limiting function is the same as what we
get for the orthogonal group of odd dimensions, as we can check by comparing the corresponding kernels: see for example \cite{CHNNR19}.
\color{black}
\subsection{Gaussian ensembles}

For $\beta > 0$, we consider the Gaussian $\beta$ Ensemble (G$\beta$E), given by $n$ random points $\lambda^{(n)}_1, \dots, \lambda^{(n)}_n$
on the real line, whose joint density, with respect to the Lebesgue measure, is
$$Z_{n, \beta}^{-1} \, e^{- ( \beta/4) \sum_{1 \leq j \leq n} (\lambda^{(n)}_j)^2} \, \prod_{1 \leq j < k \leq n} |\lambda^{(n)}_j - \lambda^{(n)}_k|^{\beta}$$
for a suitable normalization constant $Z_{n, \beta}$. 
For $\beta \in \{1,2,4\}$, we we get the spectrum of the Gaussian Orthogonal
($\beta = 1$), Unitary ($\beta = 2$) and Symplectic ($\beta =4 $) Ensembles. 
The empirical measure associated to the points $\lambda^{(n)}_1/ \sqrt{n}, \lambda^{(n)}_2/\sqrt{n}, \dots, \lambda^{(n)}_n/\sqrt{n}$ tends in law to the semi-circular distribution, 
which is supported by the interval $[-2, 2]$. 
We now rescale the G$\beta$E around a point in the bulk of the spectrum: we fix $E \in (-2,2) $, and we 
define the point process $X_n$ by 
$$X_n := \left\{ \frac{ (\lambda^{(n)}_j - E \sqrt{n}) \sqrt{n (4 - E^2)}}{2 \pi}, 1 \leq j \leq n\right\}. $$
This rescaling is chosen in order to have an average spacing close to one, for the points of $X_n$ around zero. 
 Valk\'o and Vir\'ag have proven in \cite{VV09} that $X_n$ converges in law to $X$, where $X$ is a $\operatorname{Sine}_{\beta}$ point process, 
 normalized in such a way that the average spacing between the points is equal to one. 
 We choose the measures $M_{0,n}$ and $M_0$ as follows: 
 \begin{itemize}
 \item The measure $M_{0,n}$ is the restriction to $\mathbb{R} \backslash [-1,1]$ of the suitably rescaled semi-circular distribution: more precisely, $M_{0,n}$ has 
 a density
 $$x \mapsto (4 - E^2)^{-1/2} \sqrt{\max(0,4 - (E  + 2 \pi x n^{-1} (4 - E^2)^{-1/2})^2)} \mathds{1}_{x \not\in [-1,1]}.$$
 \item The measure $M_0$ is the Lebesgue measure on $\mathbb{R} \backslash [-1,1]$. 
 \end{itemize}
It is clear that $M_{0,n}([-1,1]) = 0$  and that $M_{0,n}$ converges vaguely to $M_0$ when $n \rightarrow \infty$. 
The densities of $M_0$ and $M_{0,n}$ are uniformly bounded by $2/(\sqrt{4 - E^2})$, which ensures that the conditions in Proposition \ref{crit} relative 
to $M_{0,n}([x,y])$ and $M_0 ([x,y])$ are satisfied, for $\nu > 0$ arbitrarily small. 
In \cite{NV21}, Najnudel and Vir\'ag prove, after suitable rescaling of the points, that 
$$\mathbb{E} [ (M_{X_n}(x) -  M_{0,n} (x))^2 ] \leq C \log (2 + \min(n, |x|))$$
and 
$$\mathbb{E} [ (M_{X}(x) -  M_{0} (x))^2 ] \leq C \log (2 + |x|)$$
where $C > 0$ depends only on $E$ and $\beta$. 
These estimate imply the conditions given in Proposition \ref{crit}, for $\alpha> 0$ arbitrarily small. 

Let us now consider the characteristic polynomial associated to the G$\beta$E: 
$$Z_n(z) := \prod_{j=1}^n (z - \lambda^{(n)}_j)$$
We observe that for $s \in \mathbb{C}$,
\begin{align*} &  \frac{Z_{n}(E \sqrt{n} + 2 \pi s (n(4 - E^2))^{-1/2})}{Z_n (E \sqrt{n})}
\\ & = \prod_{j=1}^n \frac{ E \sqrt{n} - \lambda^{(n)}_k + 2 \pi s (n(4 - E^2))^{-1/2}}{ E \sqrt{n} - \lambda^{(n)}_k} 
\\ & = \prod_{j=1}^n \left( 1 + \frac{s}{ (2 \pi)^{-1}  ( E \sqrt{n} - \lambda^{(n)}_k)(n(4 - E^2))^{1/2}} \right)
\\ & = \prod_{\rho \in X_n} \left(1 - \frac{s}{\rho} \right).
\end{align*}
Moreover, we have
\begin{align*}
& \int_{-\infty}^{\infty} \frac{d M_{0,n}(\rho)}{\rho} 
\\ & =(4 - E^2)^{-1/2}   \int_{\mathbb{R} \backslash [-1,1]}  \sqrt{\max(0,4 - (E  + 2 \pi x n^{-1} (4 - E^2)^{-1/2})^2)} \frac{dx}{x}
\\ & = (4 - E^2)^{-1/2}  \int_{\mathbb{R}} \mathds{1}_{|y| > 2 \pi n^{-1} (4 - E^2)^{-1/2}}  \sqrt{\max(0,4 - (E  + y)^2)} \frac{dy}{y}
\\ & \underset{n \rightarrow \infty}{\longrightarrow} (4 - E^2)^{-1/2}  \int_{-2}^2 \frac{\sqrt{4 - t^2}}{t - E} dt,
\end{align*}
where the last integral is understood in the sense of a principal value, and is equal to $-\pi E$.
Hence, 
$$\int_{-\infty}^{\infty} \frac{d M_{0,n}(\rho)}{\rho}  \underset{n \rightarrow \infty}{\longrightarrow} - \pi E  (4 - E^2)^{-1/2}.$$
From Theorem \ref{main}, we then deduce 
$$ \frac{Z_{n}(E \sqrt{n} + 2 \pi s (n(4 - E^2))^{-1/2})}{Z_n (E \sqrt{n})}
\underset{n \rightarrow \infty}{\longrightarrow} \underset{A \rightarrow \infty}{\lim} e^{\frac{s \pi E}{\sqrt{4 - E^2}}} \, \prod_{\rho \in X \cap [-A,A]} \left(1 - \frac{s} {\rho} \right)$$
in distribution, for the topology of the uniform convergence on compact sets: recall that $X$ is a $\operatorname{Sine}_{\beta}$ point process. 
For $\beta = 2$, this result has been proven in \cite{CHNNR19} in a different way. 

We can also consider the G$\beta$E near the edge of the spectrum. In this case, we consider 
$$X_n =  \left\{ \frac{n^{1/6}  (\lambda^{(n)}_j - 2 \sqrt{n})}{2 \pi} , 1 \leq j \leq n\right\},$$
the measure $M_{0,n}$ with density
$$x \mapsto  n^{1/3}  \sqrt{ \max(0, 4 -  (2 + 2 \pi x n^{-2/3})^2) } \mathds{1}_{x \leq -1} $$
with respect to the Lebesgue measure, and the measure $M_0$ with density
$$x \mapsto \sqrt{ - 8 \pi x} \mathds{1}_{x \leq -1}.$$
It is clear that $M_{0,n}$ and $M_0$ are supported on $\mathbb{R} \backslash [-1,1]$ and that $M_{0,n}$ converges vaguely to $M_0$ when $n \rightarrow \infty$. 
For $x > -1$, the density of the law of $M_{0,n}$ vanishes, and  for $x > -1$, it is equal to  
\begin{align*} 
n^{1/3}  \sqrt{ \max(0, 4 -  (2 + 2 \pi x n^{-2/3})^2) }
& = n^{1/3}  \sqrt{ \max(0,  - 8 \pi x n^{-2/3} - 4 \pi^2 x^2 n^{-4/3}) }
\\ & \leq  n^{1/3}  \sqrt{ \max(0,  - 8 \pi x n^{-2/3}) }
\\ & =   \sqrt{ - 8 \pi x} 
\end{align*}
We deduce that the condition of Proposition \ref{crit} on $M_{0,n} ([x,y])$ and $M_{0} ([x,y])$ is satisfied, for $\nu = 1/2$.

In the proof of  Proposition 10 in \cite{NV21},  equation (13) shows, after taking account of the scaling, that for some $C > 0$ depending only on $\beta$, 
$$\mathbb{E} [ (M_{X_n} ( (x, \infty)) - M_{0,n} ((x, \infty)) )^2]  \leq C \log(2 + n_0),$$
for $$n_0 = \max \left(1, n - \frac{\mu^2}{4} - \frac{1}{2} \right), $$
with 
$$\mu = 2 \sqrt{n} + 2 \pi x n^{-1/6}.$$
In \cite{NV21}, one sees that for $\mu \geq 0$, the bound proportional to $\log (2 +n_0)$ comes from equation (5) in Proposition 10, which is deduced from equations (17) and (18). 
In the proof of equation (17), the bound we really get is proportional to
$$\pi^2 + \sum_{n_0 - \ell + 1 \leq k \leq n_0, n_0 - k \in \mathbb{Z}} \frac{1}{k} = \mathcal{O} \left( 1 + \log  \left( \frac{n_0+1}{n_0 - \ell +1} \right) \right)$$
where 
$$\ell := \max \left(0, \lceil n_0 - \mu^{2/3} - 1\rceil \right) \in [0, n_0].$$
Hence, we have a bound of the form 
$$\mathbb{E} [ (M_{X_n} ( (x, \infty)) - M_{0,n} ((x, \infty)) )^2] \leq C   \left( 1 + \log  \left( \frac{n_0+1}{n_0 - \ell +1} \right) \right),$$
If $x \geq 0$, we have $\mu \geq 2 \sqrt{n}$, $n_0 = 1$, $\ell = 0$, and then 
$$\mathbb{E} [ (M_{X_n} ( (x, \infty)) - M_{0,n} ((x, \infty)) )^2]  = \mathcal{O}(1).$$
If $-n^{1/6} \leq x \leq 0$, we have 
$$n - \frac{\mu^2}{4} - \frac{1}{2} = n - \left( n + 2 \pi x n^{1/3} + \pi^2 x^2 n^{-1/3} \right) - \frac{1}{2} = - 2 \pi x n^{1/3} + \mathcal{O}(1),$$
$$n_0   =  - 2 \pi x n^{1/3}  + \mathcal{O}(1),$$
$$ \mu = 2 \sqrt{n} + 2 \pi x n^{-1/6} \geq 2 \sqrt{n} - 2 \pi \geq 0$$
as soon as $n \geq 10$,  and in this case, 
$$\mu^{2/3}  = 2^{2/3} n^{1/3} \left( 1 + \pi x  n^{-2/3} \right)^{2/3} 
= 2^{2/3} n^{1/3} \left( 1 + \mathcal{O}(n^{-1/2} ) \right).$$
If $-1/10 \leq x \leq 0$, we get
$$n_0 - \mu^{2/3}  \leq (\pi/5 - 2^{2/3} ) n^{1/3} + \mathcal{O} (1) \leq 0$$
for $n$ large enough, and then $\ell = 0$. 
We deduce 
$$\mathbb{E} [ (M_{X_n} ( (x, \infty)) - M_{0,n} ((x, \infty)) )^2]  = \mathcal{O}(1).$$
If $-n^{1/6} \leq x \leq -1/10$, 
we get 
$$\ell = n^{1/3} \, \max((- 2 \pi x - 2^{2/3}), 0) + \mathcal{O}(1),$$
$$n_0 - \ell = n^{1/3} \min( 2^{2/3}, - 2 \pi x) + \mathcal{O}(1),$$
and 
\begin{align*}
& \mathbb{E} [ (M_{X_n} ( (x, \infty)) - M_{0,n} ((x, \infty)) )^2] 
\\ &  = \mathcal{O}\left( 1 + \log \left(   \frac{- 2 \pi x n^{1/3}  + \mathcal{O}(1)}{ n^{1/3} \min( 2^{2/3}, - 2 \pi x) + \mathcal{O}(1)} \right) \right).
\end{align*}
Since $-x \geq 1/10$ in this case, for $n$ large enough, the numerator is smaller than $7 |x| n^{1/3}$ and  the denominator is larger than $0.6 n^{1/3}$. 
We deduce
$$ \mathbb{E} [ (M_{X_n} ( (x, \infty)) - M_{0,n} ((x, \infty)) )^2]  = \mathcal{O} ( 1+  \log  |x|)$$
for $n$ large enough. 
Finally, for $x \leq -n^{1/6}$, Theorem 7 in \cite{NV21} gives 
$$\mathbb{E} [ (M_{X_n} ( (x, \infty)) - M_{0,n} ((x, \infty)) )^2] = \mathcal{O} (\log (2 + n)) = \mathcal{O} ( 1  + \log |x| ).$$
Hence, for all $x \in \mathbb{R}$, we have, 
$$\mathbb{E} [ (M_{X_n} ( (x, \infty)) - M_{0,n} ((x, \infty)) )^2]  = \mathcal{O} \left( \log ( 2 + \max(0, -x) ) \right),$$
in particular
$$\mathbb{E} [ (M_{X_n} ( (0, \infty)) - M_{0,n} ((0, \infty)) )^2]  = \mathcal{O}(1)$$
and then 
$$\mathbb{E} [ (M_{X_n} (x) - M_{0,n} (x ))^2] =   \mathcal{O} \left( \log ( 2 + \max(0, -x) ) \right) = \mathcal{O} \left( \log (2 + |x|) \right)$$
In \cite{RVV06}, it is proven that $X_n$ converges to a point process $X$, given by the eigenvalues of a random operator, called {\it stochastic Airy operator}.  
For $a < b < c< d$, let $\varphi$ be a continuous  function from $\mathbb{R}$ to $[0,1]$, equal to $1$ on $[b,c]$ and to $0$ outside $[a,d]$. 
From the convergence of $X_n$ towards $X$, we deduce, for $A > 0$, and $j \in \{1,2\}$:
\begin{align*}
\mathbb{E} [  \min(A, ( M_{X} ([b,c]))^j ) ] &  \leq \mathbb{E} \left[ \max\left(0,  \min\left(A, \left(\int_{\mathbb{R}} \varphi  dM_{X}  \right)^j \right)  \right) \right] 
\\ & = \underset{n \rightarrow \infty}{\lim} \,  \mathbb{E} \left[ \max  \left(0,  \min\left(A, \left(\int_{\mathbb{R}} \varphi dM_{X_n}  \right)^j \right)  \right) \right]  
\\ & \leq  \underset{n \rightarrow \infty}{\lim \inf} \,  \mathbb{E} [ ( M_{X_n} ([a,d]))^j ] 
\end{align*}
Hence, letting $A \rightarrow \infty$, we get
$$\mathbb{E} [ ( M_{X} ([b,c]))^j  ]  \leq  \underset{n \rightarrow \infty}{\lim \inf} \,  \mathbb{E} [ ( M_{X_n} ([a,d]))^j ].$$
Similarly, 
$$\mathbb{E} [ ( M_{X} ([a,d]))^j  ]  \geq  \underset{n \rightarrow \infty}{\lim \sup} \,  \mathbb{E} [ ( M_{X_n} ([b,c]))^j ].$$
The bound on the $L^2$ norm of $M_{X_n} (x) - M_{0,n} (x)$ shows that for any bounded interval $I$, and all $n \geq 1$, 
$\mathbb{E} [ ( M_{X_n} (I))^2]$ is finite and bounded with respect to $n$. Hence, 
$\mathbb{E} [ (M_X(I))^2 ] < \infty$  for all bounded intervals $I$. 
By dominated convergence, 
$$\mathbb{E} [ ( M_{X} ((a,d)))^j  ] = \underset{ b \rightarrow a, c \rightarrow d}{\lim}  \,  \mathbb{E} [ ( M_{X} ([b,c]))^j  ]  $$
and 
$$\mathbb{E} [ ( M_{X} ([b,c])^j  ] = \underset{ a \rightarrow b, d \rightarrow c}{\lim}   \, \mathbb{E} [ ( M_{X} ([a,d]))^j  ].$$
We deduce, for all $a< d$,  
$$\mathbb{E} [ ( M_{X} ( (a,d)))^j  ]  \leq  \underset{n \rightarrow \infty}{\lim \inf} \,  \mathbb{E} [ ( M_{X_n} ([a,d]))^j ] 
$$ $$\leq  \underset{n \rightarrow \infty}{\lim \sup} \,  \mathbb{E} [ ( M_{X_n} ([a,d]))^j ] 
\leq \mathbb{E} [ ( M_{X} ( [a,d]))^j  ]$$

Hence, 
$$\mathbb{E} [ (M_{X} ( (a,d)))^2 - 2 M_0( [a,d]) M_X ([a,d]) +  ( M_0( [a,d]) )^2 ] 
$$ $$\leq  \underset{n \rightarrow \infty}{\lim \inf} \,  \mathbb{E} [ ( M_{X_n} ([a,d]) - M_{0,n} ([a,d]) )^2] 
= \mathcal{O} ( \log (2 + |a| + |d|) ),
$$
which implies that  
$$\mathbb{E} [ ( M_{X} ([a,d]) - M_0 ([a,d]) )^2 ] = \mathcal{O} ( \log (2 + |a| + |d|) )$$
as soon as $\mathbb{P} [ a \in X] = \mathbb{P} [ d \in X] = 0$. 
For all $a < d$, let $(a_m)_{m \geq 1}$ be a sequence increasing to $a$ and $(d_m)_{m \geq 1}$ a sequence decreasing to $d$, such that 
$a_1 > a-1$, $d_1 < d+1$, and $X$ has almost surely no point at $a_m$ or $d_m$ for all $m \geq 1$. 
Finding such a sequence is possible, since for all $\varepsilon >0$, and for any finite subset $J \in [a-1, d+1]$ such that 
$\mathbb{P} [ t \in X] > \varepsilon$ for all $t \in J$, 
$$\varepsilon \operatorname{Card}(J)
\leq \sum_{t \in J} \mathbb{P} [ t \in X]=  
\mathbb{E} [M_{X} (J) ] 
\leq \mathbb{E} [ M_{X} ([a-1,d+1])],$$
and then
$$\operatorname{Card}(J) \leq  \varepsilon^{-1} \mathbb{E} [ M_{X} ([a-1,d+1])] < \infty$$
which implies that there are finitely many points $t \in [a-1, d+1]$ with  $\mathbb{P} [ t \in X] > \varepsilon$ for all $\varepsilon > 0$, and then 
at most countably many points $t$ such that $\mathbb{P} [ t \in X] > 0$. 
We  get, for all $m \geq 1$, 
\begin{align*} \mathbb{E} [ ( M_{X} ([a_m,d_m]) - M_0 ([a_m,d_m]) )^2 ] & = \mathcal{O} ( \log (2 + |a_m| + |d_m|) ) \\ & = \mathcal{O} ( \log (2 + |a| + |d|) ) 
\end{align*}
and then by dominated convergence, 
$$\mathbb{E} [ ( M_{X} ([a,d]) - M_0 ([a,d]) )^2 ] = \mathcal{O} ( \log (2 + |a| + |d|) )$$
for all $a < d$. 
We now have all the estimates needed in order to apply Proposition \ref{crit}, for $\alpha > 0$ arbitrarily small and $\nu = 1/2$. Therefore, the conclusion of Theorem \ref{main} holds. 
We have, for $n$ large enough, 
\begin{align*}
\int_{-\infty}^{\infty} \frac{d M_{0,n}(\rho)}{\rho} & = \int_{-\infty}^{-1}  n^{1/3}  \sqrt{ \max(0, 4 -  (2 + 2 \pi x n^{-2/3})^2) } \frac{dx}{x} 
\\ & =  J_n 
+  \int_{-\infty}^{0}  n^{1/3}  \sqrt{ \max(0, 4 -  (2 + 2 \pi x n^{-2/3})^2) } \frac{dx}{x} 
\\ & = J_n +  \int_{-\infty}^{0} n^{1/3} \sqrt{  \max(0, 4 -  (2 + y)^2) } \frac{dy}{y} 
\\ & = J_n  + n^{1/3}  \int_{-2}^{2} \sqrt{  4 -  t^2 }  \frac{dt}{t - 2}
\\ & = J_n - 2 \pi n^{1/3}
\end{align*} 
where 
$$J_n =  - \int_{-1}^{0}  n^{1/3}  \sqrt{ \max(0, 4 -  (2 + 2 \pi x n^{-2/3})^2) } \frac{dx}{x}$$
tends to 
$$J_{\infty}  = - \int_{-1}^{0}  \sqrt{ \max(0, - 8 \pi x) } \frac{dx}{x}. 
$$
when $n \rightarrow \infty$. 
On the other hand, for $A > 1$, 
\begin{align*}
\int_{-A}^{A} \frac{d M_0 (\rho)}{d \rho} 
& = J_{\infty}  + \int_{-A}^{0} \sqrt{-8 \pi x}  \, \frac{dx}{x} 
\\ & = J_{\infty} -  \sqrt{8 \pi}  \int_{0}^{A} \frac{dx}{\sqrt{x}}
\\ & = J_{\infty} - 4 \sqrt{2  \pi A}. 
\end{align*}  
Theorem \ref{main} provides the convergence 
$$e^{-2  \pi s n^{1/3}}  \prod_{\rho \in X_n} \left( 1 - \frac{s}{\rho} \right)
\underset{n \rightarrow \infty}{\longrightarrow} 
\underset{A \rightarrow \infty}{\lim} e^{- 4s \sqrt{2 \pi A}} \prod_{\rho \in X \cap [-A,\infty)} \left( 1 - \frac{s}{\rho} \right).$$
Notice that we have replaced $X \cap [-A,A]$ because $X$ is almost surely bounded from above: this last fact can be deduced from the 
bound
$$\mathbb{E} [ (M_{X_n} ( (0, \infty)) - M_{0,n} ((0, \infty)) )^2]  = \mathcal{O} \left( 1 \right)$$
proven before. 
Hence, we have the convergence in law, for the topology of uniform convergence on compact sets, 
$$e^{-2 \pi s n^{1/3}}  \frac{Z_n ( 2 \sqrt{n} + 2 \pi s  n^{-1/6})} {Z_n(2 \sqrt{n})} 
\underset{n \rightarrow \infty}{\longrightarrow} 
\underset{A \rightarrow \infty}{\lim} e^{- 4s \sqrt{2 \pi A}} \prod_{\rho \in X \cap [-A,\infty)} \left( 1 - \frac{s}{\rho} \right).$$
This corresponds to a result proven by Lambert and Paquette in \cite{LP20}. 

\begin{remark}
It may be possible get convergence of renormalized characteristic polynomials of Wigner matrices, if estimates similar to those of \cite{NV21} are satisfied. To our knowledge, 
such estimates are currently not proven, and showing them is beyond the scope of this article. 
\end{remark}

\section{Proof of Theorem \ref{stozetaeq} }
We have the Hadamard product formula
$$\zeta(s) = \frac{e^{bs}}{2 (s-1) \Gamma(1 + s/2)} \prod_{\rho \in Z} (1- s/\rho)e^{s/\rho} $$
where $Z$ is the set of non-trivial zeros  of $\zeta$, and $b$ is a universal constant (see, for example, Chapter II of  \cite{T86}). 
Hence, 
$$\zeta(s) = \frac{e^{(b+c)s}}{2 (s-1) \Gamma(1 + s/2)}  \underset{A \rightarrow \infty}{\lim} \prod_{\rho \in Z, |\Im(\rho)| \leq A} (1- s/\rho)$$
for 
$$c =  \underset{A \rightarrow \infty}{\lim}  \sum_{\rho \in Z, \,  |\Im(\rho)| \leq A}  \frac{1}{\rho} 
=  \sum_{\rho \in Z, \, \Im(\rho) > 0} \left( \frac{1}{\rho} + \frac{1}{ 1-\rho} \right)
= \sum_{\rho \in Z, \, \Im(\rho) > 0} \frac{1}{\rho( 1-\rho)},$$
the last sum being absolutely convergent. 
For $u \geq 0$, let $N(u)$ be the number of zeros of $\zeta$ with imaginary part in $(0,u]$, and for $u < 0$, let $N(u)$ be the opposite of the number 
of zeros with imaginary part in $(u,0)$. We have, for $u > 0$ sufficiently large, 
$$N(u) = \frac{u}{2 \pi} \log \left( \frac{u}{2 \pi e} \right) + \mathcal{O} (\log u),$$
as stated, for example, in Theorem 9.4. of \cite{T86}. By symmetry, we deduce that for all $u \in \mathbb{R}$, 
$$N(u) = \frac{u}{2 \pi} \log \left( \frac{|u|}{2 \pi e} \right) + \mathcal{O} (\log (2 + |u|)),$$
where $0 \log 0 := 0$. 
For $v > u > 10$, we deduce
$$N(v) - N(u) = \mathcal{O} ( (1 + v-u) \log v),$$
and for $t \geq 10$ and $A \geq 2t$,  
$$N(A + 2t ) - N(A) = \mathcal{O} ( t \log A). $$
Hence, for $A \geq 2 |s|$,
$$\prod_{\rho \in Z, \Im(\rho) \in (A, A + 2t]} \left(1 - \frac{s}{\rho} \right) 
= e^{ \mathcal{O} ( |s|  (N(A + 2t ) - N(A)) /A ) } 
= e^{ \mathcal{O} (|s| t \log A / A)} 
\underset{A \rightarrow \infty}{\longrightarrow} 1,$$
and for any $t \geq 10$, 
$$\zeta(s) = \frac{e^{(b+c)s}}{2 (s-1) \Gamma(1 + s/2)}  \underset{A \rightarrow \infty}{\lim} \prod_{\rho \in Z, \Im(\rho) \in 
[-A, A+2t]} (1- s/\rho).$$

Now, for $T \geq 1000$,  $100 \leq T/\log T \leq t \leq T$,   $\zeta(1/2 + it) \neq 0$ and $1/2+i(t + 2 \pi s/\log T) \neq 1$, we deduce
\begin{align*}
& \frac{ \zeta(1/2 + i (t + 2 \pi s/ \log T))}{\zeta (1/2 + it)} 
\\ & =  ( 1 + \varphi_{t,T} (s) ) e^{i \pi s}  \underset{A \rightarrow \infty}{\lim} \prod_{\gamma \in Z_{t,T}, 
\Im(\gamma) \in [(-A - t) (\log T)/2 \pi, (A + t) (\log T)/2 \pi] }  \left(1 - \frac{s}{\gamma} \right)  
\\ & = ( 1 + \varphi_{t,T} (s) ) e^{i \pi s}  \underset{A \rightarrow \infty}{\lim} \prod_{\gamma \in Z_{t,T}, 
\Im(\gamma) \in [-A,A] }  \left(1 - \frac{s}{\gamma} \right)  
\end{align*}
where
$$Z_{t, T} =\left\{  \left( \rho - \frac{1}{2} - it \right) \frac{\log T}{2 i \pi}, \; \rho \in Z \right\}$$
and $\varphi_{t,T}(s)$ tends to zero, uniformly in $t \in [T/\log T, T]$ and uniformly  in $s$ on any fixed compact subset of $\mathbb{C}$, when $T$ goes to infinity.

Let us more precisely evaluate how the quantity above converges when $A \rightarrow \infty$.
If  for $x > A$, $N_{A,Z_{t, T}} (x)$ denotes the number of points in $Z_{t, T}$ with real part in the interval $(A, x]$ (counted with multiplicity),  we have, 
for $1 + 2 |s| \leq A  < B$, 
$$  \prod_{\gamma \in Z_{t,T}, 
\Re(\gamma) \in (A,B] }  \left(1 - \frac{s}{\gamma} \right)  
 = \exp \left( \int_A^B \log \left(1 - \frac{s}{\gamma} \right) \, d N_{A, Z_{t, T}} (\gamma) \right).
$$
Now, 
$$N_{A, Z_{t, T}} (\gamma) 
= N \left( t + \frac{2 \pi \gamma}{\log T} \right) - N(t).$$
 Since for $u \geq v$, 
 \begin{align*}
 N(v) - N(u) &  = \frac{v}{2 \pi} \log \left( \frac{|v|}{2 \pi e} \right) - \frac{u}{2 \pi} \log \left( \frac{|u|}{2 \pi e} \right) 
 + U(v) - U(u)
 \\ & = U(v) - U(u) +  \int_u^v \frac{dw}{2 \pi} \log \left| \frac{w}{2 \pi} \right|
 \end{align*} 
 where 
 $U(u) = \mathcal{O} (\log (2 + |u|))$, we 
 have
 \begin{align*}
 & \prod_{\gamma \in Z_{t,T}, 
\Re(\gamma) \in (A,B] }  \left(1 - \frac{s}{\gamma} \right)  
\\ &  =  \exp \left( \int_A^B \log \left(1 - \frac{s}{\gamma} \right) \, \log \left| \frac{t}{2 \pi} +  \frac{\gamma}{\log T} \right| \frac{d \gamma}{\log T} \right)
\\ & \times \exp \left( \int_A^B \log \left(1 - \frac{s}{\gamma} \right) \, d \, U \left( t + \frac{2 \pi \gamma}{\log T} \right) \right)
 \end{align*} 
Since $|s| \leq A/2$ by assumption, the estimate on $U$ implies the following bound on the  term inside the second exponential:
\begin{align*} &  \left[ \log \left(1 - \frac{s}{\gamma} \right) \, U \left( t + \frac{2 \pi \gamma}{\log T} \right)  \right]_A^B - \int_A^B 
 \frac{s}{ \gamma (\gamma - s)}  U  \left( t + \frac{2 \pi \gamma}{\log T} \right) d \gamma.
 \\ & = \mathcal{O} \left( (  \log T ) \left(\frac{|s|}{B} + \frac{|s|}{A} + \int_{A}^B \frac{|s|}{\gamma (\gamma - |s|)} d \gamma \right) \right)
 \\ & + \mathcal{O} \left(
  \left( \frac{|s| \log (1 + |A|) }{|A|} +  \frac{|s| \log (1 + |B|) }{|B|} +  \int_{A}^B \frac{|s| \log (1 + \gamma)}{\gamma (\gamma - |s|)} d \gamma \right) \right)
  \\ & = \mathcal{O} \left( \frac{|s| (\log T + \log (1+A))}{A} \right).
  \end{align*}
For the first term, we get 
\begin{align*}
&  \int_A^B \log \left(1 - \frac{s}{\gamma} \right) \, \log \left| \frac{t}{2 \pi} +  \frac{\gamma}{\log T} \right| \frac{d \gamma}{\log T} 
\\ & = -s  \int_A^B  \, \log \left| \frac{t}{2 \pi} +  \frac{\gamma}{\log T} \right| \frac{d \gamma}{\gamma \log T} 
\\ & + \mathcal{O} \left(  \int_A^B \frac{|s|^2}{ \gamma^2}  \, (\log T + \log (1 + \gamma) ) \frac{d \gamma}{\log T} \right)
\\ & = -s  \int_A^B  \, \log \left| \frac{t}{2 \pi} +  \frac{\gamma}{\log T} \right| \frac{d \gamma}{\gamma \log T} 
+ \mathcal{O} \left( \frac{ |s|^2 \log (2 + A) }{A} \right)
\\ & = - \frac{s (\log B - \log A) \log (t/2 \pi)}{\log T}  - \frac{s}{\log T} \int_A^B 
 \, \log \left| 1 +  \frac{2 \pi \gamma}{t \log T} \right| \frac{d \gamma}{\gamma} 
\\ &  + \mathcal{O} \left( \frac{ |s|^2 \log (2 + A) }{A} \right)
\\ & = - \frac{s (\log B - \log A) \log (t/2 \pi)}{\log T}  - \frac{s}{\log T} \int_{2 \pi A/(t \log T)}^{2 \pi B/(t \log T)}
 \, \log \left| 1 + y \right| \frac{d y}{y} 
\\ &  + \mathcal{O} \left( \frac{ |s|^2 \log (2 + A) }{A} \right)
\end{align*}
Hence, we have 
 $$ \prod_{\gamma \in Z_{t,T}, 
\Re(\gamma) \in (A,B] }  \left(1 - \frac{s}{\gamma} \right)  
=  \exp \left(- \frac{s (\log B - \log A) \log (t/2 \pi)}{\log T}  \right.$$ $$\left. - \frac{s}{\log T} \int_{2 \pi A/(t \log T)}^{2 \pi B/(t \log T)}
 \, \log \left| 1 + y \right| \frac{d y}{y}  +  \mathcal{O} \left( \frac{ (|s| + |s|^2) ( \log T +  \log (2 + A) )}{A} \right) \right).$$
By doing a similar computation for $\Re(\gamma) \in [-B, -A)$, we deduce 
$$\prod_{\gamma \in Z_{t,T}, 
|\Re(\gamma) | \in (A,B] }  \left(1 - \frac{s}{\gamma} \right)  
=  \exp \left( - \frac{s}{\log T} \int_{2 \pi A/(t \log T)}^{2 \pi B/(t \log T)}
 \, \log \left| \frac{1 + y }{1-y} \right| \frac{d y}{y}  \right.$$ $$\left.   + \,  \mathcal{O} \left( \frac{ (|s| + |s|^2) ( \log T +  \log (2 + A)) }{A} \right) \right).$$
 Now, 
 $$\int_0^{\infty} \log \left| \frac{1 + y }{1-y} \right|   \frac{dy}{y} < \infty$$
and then
$$\prod_{\gamma \in Z_{t,T}, 
|\Re(\gamma) | \in (A,B] }  \left(1 - \frac{s}{\gamma} \right)  
=  \exp \left(  \mathcal{O} \left( \frac{ (|s| + |s|^2) ( \log T +  \log (2 + A)) }{A} + \frac{|s|}{\log T} \right) \right).$$
Letting $A = \log^2 T$ and  $B \rightarrow \infty$, we deduce 
\begin{align*}
& \frac{ \zeta(1/2 + i (t + 2 \pi s/ \log T))}{\zeta (1/2 + it)} 
\\ & = ( 1 + \varphi_{t,T} (s) ) e^{i \pi s}  \underset{B \rightarrow \infty}{\lim} \prod_{\gamma \in Z_{t,T}, 
\Re(\gamma) \in [-B,B] }  \left(1 - \frac{s}{\gamma} \right)  
\\ & =  ( 1 + \psi_{t,T} (s) )  e^{i \pi s}
  \prod_{\gamma \in Z_{t,T}, 
\Re(\gamma) \in [-\log^2 T,\log^2 T] }  \left(1 - \frac{s}{\gamma} \right), \end{align*}
where $\psi_{t,T}$  converges to zero uniformly in $t \in [T/\log T, T]$ and uniformly in $s$ on any fixed compact subset of $\mathbb{C}$, 
when $T \rightarrow \infty$.

In \cite{F74}, Fujii's main result implies the following: for  $T \geq 10$, $0 < h \leq T/2$, 
$$\int_T^{2T} (S(t +h) - S(t) )^2 dt = \mathcal{O} \left( T \log (2 + h \log T) \right),$$
where for $t \neq 0$ not equal to the imaginary part of a zero of $\zeta$, 
$$S(t) = \frac{1}{\pi} \Im \log \zeta(1/2 + it) :=  - \frac{1}{\pi} \int_{1/2}^{\infty} \Im (\zeta'(\sigma +it)/ \zeta(\sigma + it)) d\sigma$$
and for $t= 0$ or $t$ equal to the imaginary part of a zero of $\zeta$, 
$$S(t) = \lim_{\varepsilon \rightarrow 0, \varepsilon > 0}  S(t + \varepsilon).$$
We deduce that for $0 < h \leq T/3$, which implies $h \leq (T-h)/2$, 
\begin{align*}
& \int_T^{2T} (S(t -h) - S(t) )^2 dt
= \int_{T-h}^{2 T - h} (S(t ) - S(t + h) )^2 dt
\\ & \leq \int_{T-h}^{2 (T-h)}  (S(t +h) - S(t ) )^2 dt + \int_{2(T-h)}^{4 (T-h)}  (S(t +h) - S(t ) )^2 dt
\\ & = \mathcal{O} \left( (T-h) \log (2 + h \log (T-h)) +(T+h) \log (2 + h \log (T+h) \right),
\end{align*} 
and then
$$\int_T^{2T} (S(t +h) - S(t) )^2 dt = \mathcal{O} \left( T \log (2 + |h| \log T) \right)$$
for all $h \in [-T/3, T/2]$. 
Moreover, from the bound $S(u) = \mathcal{O} (\log (2 + |u|))$ (see \cite{T86}, Theorem 9.4.), we get 
$$\int_T^{2T} (S(t +h) - S(t) )^2 dt  = \mathcal{O} (T \log^2 (T + |h|)) $$
in any case, and then 
$$\int_T^{2T} (S(t +h) - S(t) )^2 dt  = \mathcal{O} (T \log^2  (4|h|)) $$
for $|h| \geq T/3$. 

 Combining the two bounds, we deduce, for all $T \geq 1000$, $h \in \mathbb{R}$, 
$$\int_T^{2T} (S(t +h) - S(t) )^2 dt  = \mathcal{O} (T \log^2 (2 +  |h| \log T)).$$
 For $0 \leq T \leq 1000$, we have
 $$\int_T^{2T} (S(t +h) - S(t) )^2 dt  \leq \int_0^{2000} (S(t +h) - S(t) )^2 dt
 = \mathcal{O} ( \log^2 ( 2 + |h|)),$$
  and then for all $T \geq 0$, 
  $$\int_T^{2T} (S(t +h) - S(t) )^2 dt  = \mathcal{O} (T \log^2 (2 +  |h| \log (2 + T))).$$
  Adding this expression for $T/2, T/4, T/8, \dots$, we deduce for all $T \geq 0$, 
   $$\int_0^{T} (S(t +h) - S(t) )^2 dt  = \mathcal{O} (T \log^2 (2 +  |h| \log (2 + T))).$$
 By \cite{T86}, Theorem 9.3., we have, for $ u \geq 1$ large enough, 
\begin{align*}
N(u) &  =  \frac{u}{2 \pi} \log \left( \frac{u}{2 \pi e} \right)  + S(u) + \mathcal{O}(1/u)
 \\ & = \int_0^u  \frac{1}{2 \pi}  \log \left( \frac{t}{2 \pi} \right) dt  + S(u) + \mathcal{O}(1/u)
 \\ & = \int_0^u  \frac{1}{2 \pi}  \log_+ \left( \frac{t}{2 \pi} \right) dt  + S(u) + \mathcal{O}(1)
 \end{align*} 
 where $\log_+ (a) := \max(0, \log a)$. 
The last estimate remains true for all $u \geq 0$, and then by symmetry, for all $u \in \mathbb{R}$, 
$$N(u) = \int_0^u  \frac{1}{2 \pi}  \log_+ \left( \frac{|t|}{2 \pi} \right) dt  + S(u) + \mathcal{O}(1).$$

Hence, 
  $$\int_0^{T} \left(N(t +h) - N(t) - \int_0^h  \frac{1}{2 \pi}  \log_+ \left( \frac{|v + t|}{2 \pi} \right) dv \right)^2 dt  $$ $$= \mathcal{O} (T \log^2 (2 +  |h| \log (2 + T))).$$
Now, assume the Riemann hypothesis and the fact that the zeros of $\zeta$ are simple. 
Let  $T \geq 1000$, let  $t$ be uniformly distributed in $[T/\log T,T]$, and let $X_T$ be the point process given by the set $Z_{t,T} \cap [- \log^2 T,  \log^2 T]$: by assumption, it has real and simple points. 
We define $M_{0,T}$ as the Lebesgue measure on $[-\log^2 T, \log^2 T] \backslash [-1,1]$, and 
$M_0$ as the Lebesgue measure on $\mathbb{R} \backslash [-1,1]$: it is immediate that $M_{0,T}$ converges vaguely to $M_0$ when $T \rightarrow \infty$. 
For $x \in [-\log^2 T, \log^2 T]$, with the notation of Theorem \ref{main}, we have
\begin{align*} 
&\left( \mathbb{E} [  (M_{X_T}( x) - M_{0,T} (x) )^2]  \right)^{1/2} 
\\ & \leq \left(\frac{1}{T- (T/\log T)} \int_0^T  \left( N\left(t + \frac{ 2  \pi x}{ \log T}  \right) - N(t) - \operatorname{sign}(x) \max( |x|-1, 0) dy   \right)^2 dt \right)^{1/2}
\\ & \leq  \left(\frac{2}{T} \int_0^T  \left( N\left(t + \frac{ 2  \pi x}{ \log T}  \right) - N(t) - \int_{0}^{2 \pi x/ \log T} \frac{  \log_+ (|t + v| / 2 \pi) }{2 \pi}   dv   \right)^2 
dt \right)^{1/2}
\\ & +  \left(\frac{2}{T} \int_0^T \left( \int_{0}^{2 \pi x/ \log T} \left( \frac{ \log_+ (|t + v| / 2 \pi) - \log T }{2 \pi}\right) dv   \right)^2 dt \right)^{1/2}
\\ & +  \left(\frac{2}{T}  \int_0^T \left(\int_{0}^{2 \pi x/ \log T} \frac{\log T }{2 \pi} \, dv - \operatorname{sign}(x) \max( |x|-1, 0)   \right)^2 dt \right)^{1/2}
\end{align*}
Taking $h = 2 \pi x / \log T$ in the previous estimate, we deduce that the first term is dominated by 
$$\log ( 2 + 2 \pi |x| \log (2 + T)/ \log T) = \mathcal{O} ( \log (2 + |x| )).$$
The third term is obviously uniformly bounded. 
For the second term, we observe that since $t \in [T/ \log T, T]$ and $x \in [-\log^2 T, \log^2 T]$, we have  
$$ \frac{T}{ \log T} - 2 \pi  \log T \leq t+v  \leq T + 2 \pi \log T.$$
Since $T \geq 1000$, we deduce that
$$\frac{T}{2 \log T} \leq t + v \leq 2T,$$
and then the second term above is dominated by
$$\left(\frac{2}{T} \int_0^T \left( \int_{0}^{2 \pi x/ \log T} ( \log \log T)  dv   \right)^2 dt \right)^{1/2} $$ $$= \mathcal{O} (|x| \log \log T/ \log T)
= \mathcal{O} ( |x| / (\log T)^{2/3}) = \mathcal{O} ( |x|^{2/3}),$$
the last estimate being due to the fact that $|x| \leq \log^2 T$. 
Hence, 
$$\mathbb{E} [  (M_{X_T}( x) - M_{0,T} (x) )^2]  = \mathcal{O} ( 1 + |x|^{4/3})$$
for $|x| \leq \log^2 T$, and then for all $x \in \mathbb{R}$, since both measures $M_{X_T}$ and $X_{0,T}$ are supported in 
$[-\log^2 T, \log^2 T]$. 
The assumptions on $X_T$, $M_0$ and $M_{0,T}$ made in Proposition \ref{crit} are then satisfied (with $\nu$ arbitrarily small and $\alpha = 2/3$). 
We deduce that for $t$ unifomly distributed on $[T/ \log T, T]$, and then also for $t$ uniform on $[0,T]$, the 
random holomorphic function
$$\left( \frac{ \zeta(1/2 + i (t + 2 \pi s/ \log T))}{\zeta (1/2 + it)} \right)_{s \in \mathbb{C}}$$
converges in law to 
 $$
 \left( e^{i \pi s} \, \lim_{A \rightarrow \infty} \, \prod_{\rho \in X \cap [-A,A]}  \left(1- \frac{s}{\rho} \right) \right)_{s \in \mathbb{C}},$$
for the topology of uniform convergence on compact sets, as soon as $X_T$, or equivalently $Z_{t,T}$, converges to a simple point process $X$, with almost surely no point at zero, and such that 
$$\mathbb{E} [ (M_{X}(x) - x)^2 ] = \mathcal{O} (1 + |x|^{2 \alpha})$$
for some $\alpha \in (0,1)$. By Theorem \ref{converse}, for any point process $X$ satisfying these estimates, convergence of holomorphic functions also implies the convergence of $Z_{t, T}$ towards $X$. 

\begin{remark}
    A conjecture by Rudnik and Sarnak \cite{RS96}, generalizing a conjecture by Montgomery \cite{M73}, states that convergence occurs with $X$ equal to a determinantal sine-kernel process. This conjecture, together with the Riemann hypothesis and the simplicity of the zeros, is then equivalent to the conjecture stated by 
 Chhaibi, Najnudel and Nikeghbali in \cite{CNN17}. This equivalence has been stated, with a detailed sketch of proof, in \cite{S18}, with some obvious parts left to the reader. Here we preferred to give our own detailed proof which also shows how the probabilistic ideas can fit in this analytic number theory setting.
 %Full detail on integration of logarithmic derivative is not given, but we are convinced that the arguments should work well when dealing with the re-scaled characteristic polynomial. But there is a uniformity issue in the estimate (26) in \cite{S18}  on the second moment of $N(t + 2 \pi x/ \log T) - N(t) - x$,   which is actually not 
 %dominated by $\log (2 + |x|)$ when, say $x = - T \log T/ 4 \pi$), that we could not bypass.
%Indeed, for $T$ large enough,
%\begin{align*}
%\frac{1}{T} \int_0^T 
%& \left( N ( t - T/2) - N(t) +  \frac{T \log T}{4 \pi} \right)^2 dt
%\geq \frac{1}{T} \int_0^1 
%\left( N ( t - T/2) - N(t) + \frac{T \log T}{4 \pi} \right)^2 dt
%\\ &  = \frac{1}{T} 
%\int_0^1 \left(N(T/2 + \mathcal{O}(1)) 
%- \frac{T \log T}{4 \pi} \right)^2
%dt \\ & =  \frac{1}{T} 
%\int_0^1 \left( 
%\frac{T \log (T/ 4 \pi e)}{4 \pi}
%+ \mathcal{O} ( \log T)
%- \frac{T \log T}{4 \pi} \right)^2 dt
%\\ & \geq \frac{1}{T} \int_0^1 \left( 
%- \frac{T \log (4 \pi e)}{4 \pi}
%+ \mathcal{O} ( \log T) \right)^2 dt
%\geq \frac{T}{100}. 
%\end{align*}   
\end{remark}

\section{Proof of Theorems \ref{Cauchy} and \ref{Cauchy-Riemann}} \label{cauchy}

\subsection{Proof of Theorem \ref{Cauchy}}
We use the following result: 
\begin{proposition} \label{thetacauchy}
 Let $\theta_1, \dots, \theta_n \in [0, \pi)$ and let $w_1, \dots, w_n$ be positive reals such that $\sum_{j=1}^n w_j = 1$. Then, for $\Theta$ uniformly distributed on $[0, \pi)$, $\sum_{j=1}^n w_j \cot(\Theta - \theta_j) $
follows a standard Cauchy distribution.
\end{proposition}
\begin{proof}
We can order the $\theta_j$'s and group the $w_j$'s corresponding to the angles which are equal to each other. Moreover, we can also translate the angles because of the periodicity of the cotangent. Hence, without loss of generality, we can assume
$$0 = \theta_1 < \theta_2 < \dots < \theta_n < \pi.$$
For $h \in \mathbb{R}$, we have to compute the Lebesgue measure of the set of $\theta \in [0, \pi)$ such that $\sum_{j=1}^n w_j \cot(\theta - \theta_j) > h$. Since the cotangent decreases from $\infty$ to $-\infty$ between its singularities, this set is the union of intervals $[\theta_j, \theta'_j)$ where $\theta_j < \theta'_j < \theta_{j+1}
$ (with the convention $\theta_{n+1} = \pi$) and $\sum_{j=1}^n w_j \cot(\theta - \theta_j) = h$ for all $\theta \in \{\theta'_1, \dots, \theta'_n\}$.  The equation can be rewritten as 
$$\sum_{j=1}^n w_j \frac{e^{ 2 i \theta} + e^{2 i \theta_j}}{e^{2i \theta} - e^{2i \theta_j}} = - i h, $$
i.e. 
$$\sum_{j=1}^n w_j  (e^{2i \theta} + e^{2i \theta_j}) \prod_{1 \leq k \leq n, k \neq j} (e^{2i \theta} - e^{2i \theta_j}) + i h \prod_{1 \leq j \leq n} (e^{2i \theta} - e^{2i \theta_j}) = 0.$$
This is a $n$ degree equation in $e^{2i \theta}$, with $n$ roots equal to $e^{2 i \theta'_j}$ for $1 \leq j \leq n$. The Lebesgue measure of the union of $[\theta_j, \theta'_j)$ is related to the product of the roots of the $n$ degree equation. The leading order term of the equation is $ih + \sum_{j=1}^n w_j = 1+ ih$. The constant term is 
$$ (-1)^{n-1} e^{2 i \sum_{j=1}^n \theta_j} \sum_{j=1}^n w_j   + ih (-1)^n  e^{2 i \sum_{j=1}^n \theta_j} = (-1)^n  e^{2 i \sum_{j=1}^n \theta_j} (ih - 1) $$
Hence, the product of the roots is 
$$ e^{2 i \sum_{j=1}^n \theta'_j} =   e^{2 i \sum_{j=1}^n \theta_j} (ih - 1)/ (ih+1).$$
We then have 
$$ \sum_{j=1}^n (\theta'_j - \theta_j) \equiv \frac{1}{2} \operatorname{Arg} ((ih-1)/(ih +1)) = \frac{1}{2} \operatorname{Arg} ((h + i)/(h- i)) \equiv   \operatorname{Arg} ( h + i) =   \operatorname{arccot} h$$
modulo $\pi$. The Lebesgue measure of $\theta \in [0, \pi)$ such that $\sum_{j=1}^n w_j \cot(\theta - \theta_j) > h$ is then congruent to $ \operatorname{arccot} h$ modulo $\pi$. Hence,
$$\mathbb{P} \left( \sum_{j=1}^n w_j \cot(\Theta - \theta_j) > h \right) = \frac{1}{ \pi} \operatorname{arccot} h,$$
since this probability is in $(0,1)$. Differentiating in $h$, we get the density of the standard Cauchy distribution. 
\end{proof}
Using the identity
$$\pi \cot (\pi x) = \underset{ A \rightarrow \infty}{\lim} \sum_{- A \leq k \leq A, \, k \in \mathbb{Z}} \, \frac{1}{x - k},$$
and doing the appropriate scaling, we deduce the following result: 
\begin{proposition} \label{cauchyperiodic}
 Let $M$ be a non-zero atomic measure on $\mathbb{R}$, with finitely many atoms in compact sets, and $B$-periodic for some $B > 0$. We define 
  $$K := B^{-1} M ([0, B) ).$$
  Then, the function 
  $$s \mapsto \frac{1}{K \pi} \underset{A \rightarrow \infty}{\lim} \int_{-A}^A \frac{ d M (\rho)}{ s - \rho}$$
  is well-defined on all points of $\mathbb{R}$ which are not atoms for $M$. This function is $B$-periodic and the image of the uniform distribution on $[0, B)$ by this function is the standard Cauchy distribution. 
\end{proposition}
\begin{proof}
We can write 
$$M = \sum_{j=1}^{n} KB w_j \sum_{m \in \mathbb{Z}}  \delta_{B ( m + \theta_j / \pi)}$$
for some $\theta_1, \dots, \theta_n \in [0, \pi)$, and for some $w_1, \dots, w_n > 0$ such that $\sum_{j=1}^n w_j = 1$. 
After checking that the change in truncation involved here does not modify the limit, we get that the function considered in the proposition is equal to 
$$ s \mapsto  \frac{B}{\pi} \sum_{j=1}^n w_j \underset{ A \rightarrow \infty}{\lim}  \sum_{-A/B \leq m \leq A/B, m \in \mathbb{Z} } \frac{ 1}{ s  - B \theta_j/\pi - Bm}
= \frac{B}{\pi}  \sum_{j=1}^n w_j \frac{\pi}{B}  \cot (\pi s/B -  \theta_j),$$
where $\pi s/B - \theta_j$ is not multiple of $\pi$, i.e. $s$ is not an atom of $M$.
If $s$ is uniformly distributed in $[0, B)$, the last expression is Cauchy-distributed by Proposition \ref{thetacauchy}.
 \end{proof}
We have the ingredients needed in order to prove Theorem \ref{Cauchy}.
For $R > 0$ such that $M((-R/2, R/2]) > 0$, let $M_R$ be the $R$-periodic measure which coincides with $M$ on the interval $(-R/2,R/2]$. 
We have, for $s \in \mathbb{R}$ which is not an atom of $M$ or $M_R$, and for $A > 0$,  
$$\int_{[-A,A]} \frac{d M(\rho)}{ s- \rho} -\int_{[-A,A]} \frac{d M_R(\rho)}{ s- \rho}
=\int_{[-A,A]} \frac{d N_R(\rho)}{ s- \rho}$$
 where $N_R$ is the signed measure $M- M_R$. Hence, for $s \in (-R/2,R/2)$,
\begin{align*}
& \int_{[-A,A]} \frac{d M(\rho)}{ s- \rho}  -\int_{[-A,A]}\frac{d M_R(\rho)}{ s- \rho} 
 \\ & = \frac{N_R(A)  }{s - A}  
 -\frac{N_R((-A)-)-  }{s + A}
 - \int_{[-A,  A]} \frac{N_R(\rho)  }{(s - \rho)^2} d \rho. 
\end{align*}
Here, the singularity $\rho = s$ does not appear since $N_R$ vanishes on $(-R/2,R/2]$ by construction. 
Since the distribution of $M$ is invariant by translation, we have 
$$\mathbb{E} [ M(x)] = x \, \mathbb{E} [ M(1)] = K x$$
for all $x \in \mathbb{R}$, and then 
$$\mathbb{E} [ (M(x) - Kx)^2 ] 
= \operatorname{Var} (M(x)) \leq C (1 + |x|^{2 \alpha}).  
$$
Hence, for $u \geq 2$ and for any integer $m \geq 1$,  
$$\mathbb{P} [ |M (m^u) - K m^u| 
\geq m^{u-1} ] \leq C ( 1 + m^{ 2  u \alpha} ) m^{- 2 (u-1)} $$
If $u$ is large enough, depending on $\alpha \in [0, 1)$, Borel-Cantelli lemma shows that almost surely, 
$$| M(m^u) - K m^u | < m^{u-1}$$
for all but finitely many integers $m$. Since $M$ is nondecreasing, we deduce that almost surely, 
$$| M(x) - K x| = \mathcal{O} (1 + |x|^{\beta})$$
for all $x \geq 0$, and by symmetry, for all $x \in \mathbb{R}$, where $\beta = (u-1)/u \in [0,1)$. Here, the implicit constant is random but does not depend on $x$. 
In particular, for $R \geq 1$, 
$$K_R := \frac{M((-R/2, R/2])}{R} = K  + \mathcal{O}(R^{\beta-1}).$$
 For $x \in (-R/2, R/2]$, we deduce 
\begin{align*}
|M_R (x) - K_R x |
& \leq |M(x) - Kx| + |K - K_R| | x| \,  
\\ & \leq \mathcal{O} (1 + |x|^{\beta}) + |x| \mathcal{O} (R^{\beta-1}) = \mathcal{O} (1 + |x|^{\beta}) 
 \end{align*}
 the random implicit constant being uniform in $x$ and $R$. Since $M_R (x) - K_R x$ is $R$-periodic in $x$ ($M_R((-R/2, R/2]) = K_R R$ by construction), we can discard the condition on $x$. Combining the bounds on $M$ and $M_R$, we deduce that uniformly on $R \geq 1$ and $x \in \mathbb{R}$, 
$$N_R(x) = (K - K_R) x +   \mathcal{O} (1 + |x|^{\beta}).$$
This estimates allows to write, for $s \in (-R/2, R/2)$ which is not an atom of $M$ or $M_R$, 
$$\int_{[-A,A]} \frac{d M(\rho)}{ s- \rho}  -\int_{[-A,A]}\frac{d M_R(\rho)}{ s- \rho}  
\underset{A \rightarrow \infty}{\longrightarrow} 
 \underset{A \rightarrow \infty}{\lim} \int_{\mathbb{R}} \frac{N_R(\rho)  }{(s - \rho)^2} d \rho.$$  
 Indeed, the sum of the boundary terms of the integration by parts above tends to zero when $A \rightarrow \infty$, and the limit at the right-hand side exists and equals 
 $$\int_{R/2}^{\infty} \left( \frac{N_R(\rho)}{(s- \rho)^2} + 
\frac{N_R(-\rho)}{(s+ \rho)^2} \right) d \rho,  $$
the integral being convergent from the above estimate of $N_R$. Notice that the integral can start at $R/2$ since $N_R$ vanishes on $(-R/2,R/2]$.  
Since $M_R$ is periodic, the integral on $[-A,A]$ with respect to $dM_R(\rho)$ converges when $A \rightarrow \infty$, and the following equality is meaningful and satisfied: 
$$\frac{1}{K_R \pi} \underset{A \rightarrow \infty}{\lim}\int_{[-A,A]} \frac{d M(\rho)}{ s- \rho} 
= \frac{1}{K_R \pi} \underset{A \rightarrow \infty}{\lim}\int_{[-A,A]} \frac{d M_R(\rho)}{ s- \rho}
+ \frac{1}{K_R\pi}\int_{R/2}^{\infty} \left( \frac{N_R(\rho)}{(s- \rho)^2} + 
\frac{N_R(-\rho)}{(s+ \rho)^2} \right) d \rho.$$
Now, let us take for $s$ a uniform random variable on $(-R/2,R/2)$, independent of $M$. By Proposition \ref{cauchyperiodic}, the first term of the right-hand side is  
distributed as a standard Cauchy variable. The last integral is 
$$ (K-K_R) \int_{R/2}^{\infty}
\rho \left( \frac{1}{(s-\rho)^2} - 
\frac{1}{(s+\rho)^2} \right) d \rho + \mathcal{O} \left( \int_{R/2}^{\infty} (1 + \rho^{\beta})
\left( \frac{1}{(s-\rho)^2} + 
\frac{1}{(s+\rho)^2} \right) d \rho \right). 
$$
almost surely, uniformly in $R \geq 1$ and $s \in (-R/2,R/2)$. 

The main term has a modulus at most: 
$$|K - K_R| \int_{R/2}^{\infty} 
\frac{ 4 |s| \rho^2}{ (\rho - |s|)^2 (\rho + |s|)^2} d \rho \leq 
4 |s| |K - K_R| \int_{R/2}^{\infty} \frac{d \rho}{( \rho - |s|)^2} = \frac{ 4 |s| |K - K_R|}{ R/2 - |s|}.  $$
The error term is dominated by 
$$\int_{R/2}^{\infty} \frac{\rho^{\beta}} {( \rho - |s|)^2} d \rho  \leq \frac{(R/2)^2}{(R/2 - |s|)^2} \int_{R/2}^{\infty} \frac{\rho^{\beta}} {\rho^2} d \rho
\leq \frac{(R/2)^2}{(R/2 - |s|)^2} 
 \frac{(R/2)^{\beta- 1}}{1 - \beta}. $$
 Since $|K - K_R| = \mathcal{O} (R^{\beta-1})$, we get almost surely, uniformly in $R \geq 1$ and $s \in (-R/2, R/2)$, 
 $$\frac{1}{K_R\pi}\int_{R/2}^{\infty} \left( \frac{N_R(\rho)}{(s- \rho)^2} + 
\frac{N_R(-\rho)}{(s+ \rho)^2} \right) d \rho
 = \mathcal{O} \left( \frac{R^{\beta}}{R/2 - |s|}
 + \frac{R^{\beta +1}}{(R/2 - |s|)^2} \right)
 = \mathcal{O} \left( \frac{R^{\beta +1}}{(R/2 - |s|)^2} \right).
$$
For $R \geq 10$, if $s$ is uniformly distributed on 
$(-R/2 + R/\log R, R/2 - R/\log R)$, then the random variable above converges to zero almost surely when $R \rightarrow \infty$. If $s$ is uniformly distributed on $(-R/2, R/2)$, it can be coupled with a uniform variable on $(-R/2 + R/\log R, R/2 - R/\log R)$ in such a way that the two variable are different with probability $\mathcal{O}(1/\log R)$, which is sufficient to deduce that 
$$ \int_{R/2}^{\infty} \left( \frac{N_R(\rho)}{(s- \rho)^2} + 
\frac{N_R(-\rho)}{(s+ \rho)^2} \right) d \rho \underset{R \rightarrow \infty}{\longrightarrow} 0$$
in probability when $R \rightarrow \infty$. Using Slutsky's theorem and the fact that $K_R/K$ tends to $1$ in probability, one deduces that 
$$\frac{1}{K \pi} \underset{A \rightarrow \infty}{\lim}\int_{[-A,A]} \frac{d M(\rho)}{ s- \rho} $$
converges in distribution to a standard Cauchy variable, when $s$ is uniformly distributed on $(-R/2, R/2)$ and independent of $M$, and $R$ tends to infinity. 
Now, translation invariance of $M$ in distribution shows that for deterministic $s$, $h \in \mathbb{R}$,
$$ \frac{1}{K \pi} \underset{A \rightarrow \infty}{\lim}\int_{[-A,A]} \frac{d M(\rho)}{ s- \rho}
= \frac{1}{K \pi} \underset{A \rightarrow \infty}{\lim}\int_{[-A+h,A+h]} \frac{d M(\rho)}{ s+h- \rho}$$
in distribution. Now, replacing the interval $[-A+h, A+h]$ by $[-A,A]$ changes the last integral by at most 
$$ \frac{M ( [A, A+h] ) +M ( [-A, -A+h] }{(A - |s| - |h|)_+}.$$
The expectation of the numerator is bounded by a quantity which does not depend on $A$, whereas the denominator goes to infinity with $A$, so the variation of the integral tends to zero in probability when $A \rightarrow \infty$. Hence, by Slutsky's lemma, 
$$ \frac{1}{K \pi} \underset{A \rightarrow \infty}{\lim}\int_{[-A+h,A+h]} \frac{d M(\rho)}{ s+h- \rho} =
 \frac{1}{K \pi} \underset{A \rightarrow \infty}{\lim}\int_{[-A ,A ]} \frac{d M(\rho)}{ s+h- \rho}
$$
in distribution, and the distribution of 
$$  \frac{1}{K \pi} \underset{A \rightarrow \infty}{\lim}\int_{[-A,A]} \frac{d M(\rho)}{ s- \rho}$$
does not depend on $s \in \mathbb{R}$ if $s$ is deterministic. We get the same distribution if $s$ is independent of $M$ and uniform in $(-R/2, R/2)$ for any $R \geq 1$. This distribution is then necessarily standard Cauchy by taking the limit $R \rightarrow \infty$.

\subsection{Proof of Theorem \ref{Cauchy-Riemann}}

We consider the Riemann $\xi$  function given by
$$\xi(s) = \frac{s(s-1)}{2 \pi^{s/2}} \Gamma(s/2) \zeta(s),$$
which implies 
$$\frac{\xi'(s)}{\xi(s)} = \frac{1}{s} + \frac{1}{s-1} - \frac{1}{2} \log \pi + \frac{1}{2}\frac{ \Gamma' (s)}{ \Gamma(s)} + \frac{\zeta'(s)}{ \zeta(s)}. $$
Applying this formula to $s = 1/2 + it$, for $t \in [T/\log T, T]$, $T > 10$, we get 
$$\frac{2i }{\log T} \frac{\xi'(1/2 + it )}{\xi(1/2 + it)} = \frac{2i }{\log T} \frac{\zeta'(1/2 + it )}{\zeta(1/2 + it)} 
+ \frac{i}{ \log T}\frac{ \Gamma' (1/2 + it)}{ \Gamma(1/2 + it )} + \mathcal{O} (1/ \log T).$$
 Since 
 $$\frac{ \Gamma' (1/2 + it)}{ \Gamma(1/2 + it )}  = \log t + \mathcal{O}(1),$$
 we get 
 $$\frac{2i }{\log T} \frac{\xi'(1/2 + it )}{\xi(1/2 + it)} = \frac{2i }{\log T} \frac{\zeta'(1/2 + it )}{\zeta(1/2 + it)}  + i + \mathcal{O}(\log \log T/ \log T),$$
 which connects the logarithmic derivative of $\zeta$ to the logarithmic derivative of $\xi$.

 Since $\omega T \in [T/\log T, T]$ with probability going to $1$ when $T \rightarrow \infty$, it is sufficient to prove that 
 $$\left(\frac{2i}{\log T} \frac{\xi'(1/2 + i \omega T)}{\xi (1/2 + i \omega T)} \right)_{T \geq 2}
 $$
 converges to a standard Cauchy variable when $T$ tends to infinity. 
The function $\xi$ is real on the critical line, which implies that 
 $$\frac{2i }{\log T} \frac{\xi'(1/2 + i \omega T )}{\xi(1/2 + i \omega T)}$$
 is a random variable on $\mathbb{R} \cup \{\infty\}$, considered as the Alexandroff compactification of $\mathbb{R}$. By Prokhorov's theorem, it is enough to show that for all increasing sequences $(T_n)_{n \geq 1}$ such that 
 $$\frac{2i }{\log T} \frac{\xi'(1/2 + i \omega T_n)}{\xi(1/2 + i \omega T_n)}$$
 converges to a limiting distribution on $\mathbb{R} \cup \{\infty\}$, this distribution is standard Cauchy. 
  The sequence of random measures 
 $$M_{T_n} := \sum_{u \in \mathbb{R}, \, \xi(1/2 + i ( \omega T_n + 2 \pi u/\log T_n)) = 0} m(1/2 + i ( \omega T_n + 2 \pi u/\log T_n)) \delta_{u},$$
 where $m (\rho)$ denotes the multiplicity of a zero $\rho$ of $\xi$, is tight for the topology of vague convergence of locally finite measures. 
 Indeed, for fixed $A > 0$, the expectation of the measure of the interval $[-A,A]$ is 
 \begin{align*}
 & \int_0^1 d \tau \sum_{u \in [-A,A], \, \xi(1/2 + i ( \tau T_n + 2 \pi u/\log T_n)) = 0} m(1/2 + i ( \tau T_n + 2 \pi u/\log T_n)) 
 \\ & = \sum_{\gamma \in \mathbb{R}, \, \xi(1/2 + i \gamma) = 0} m(1/2 + i \gamma)  \int_0^{1} d \tau \mathds{1}_{|\gamma - \tau T_n| (\log T_n)/ 2 \pi \leq A}. 
  \end{align*} 
 The values of $\tau$ such that the last indicator function is equal to one form an interval of length at most $4 \pi A / (T_n \log T_n)$. Moreover, if this interval is non-empty, we have
 $\gamma \geq - 2 \pi A / \log T_n$ and $\gamma \leq T_n +  2 \pi A / \log T_n$. Hence, the expectation of the measure of $[-A,A]$ is at most 
 $$ \frac{4 \pi A}{T_n \log T_n}  \sum_{\gamma \in  [-2 \pi A/ \log T_n, T_n +  2 \pi A / \log T_n], \, \xi(1/2 + i \gamma) = 0} m(1/2 + i \gamma),$$
 which, by standard results on the distribution of the zeros of $\zeta$, is uniformly bounded in $n$ for fixed $A$. 
 This is enough for tightness of the sequence of random measures $(M_{T_n})_{n \geq 1}$
 since for all $\varepsilon \in (0,1)$, they are, with probability at least $1- \varepsilon$, in the set of measures $\mu$ on $\mathbb{R}$ such that 
 $$\{ \forall A \in \mathbb{N}, \mu([-A,A]) \leq C_{\varepsilon,A} \},$$
 for some $C_{\varepsilon,A}$ depending on $\varepsilon$ and $A$ but not on $n$. This set of measures is relatively compact for the topology of vague convergence.
 
The tightness property proven above shows that there necessarily exists a subsequence $(U_k = T_{n_k})_{k \geq 1}$, such that the following convergence in distribution holds, for the topology of vague convergence of locally finite measures on $\mathbb{R}$:  
$$M_{U_k} = \sum_{u \in \mathbb{R}, \xi(1/2 + i ( \omega U_k + 2 \pi u/\log U_k)) = 0} m(1/2 + i ( \omega U_k + 2 \pi u/\log U_k)) \delta_{u}  \underset{k \rightarrow \infty}{\longrightarrow} M$$
where $m (\rho)$ is the multiplicity of a zero $\rho$ of $\xi$, and $M$ is a random point measure. 

Moreover, the distribution of $M$ is necessarily invariant by translation. Indeed, $M$ is the limit of 
 $$\sum_{u \in \mathbb{R}, \, \xi(1/2 + i ( \omega U_k + 2 \pi u/\log U_k)) = 0} m(1/2 + i ( \omega U_k + 2 \pi u/\log U_k)) \delta_{u},$$
 whereas for a given $h \in \mathbb{R}$, the image of $M$ by translation of $h$ is the limit of 
 $$\sum_{u \in \mathbb{R}, \, \xi(1/2 + i ( \omega U_k + 2 \pi u/\log U_k)) = 0} m(1/2 + i ( \omega U_k + 2 \pi u/\log U_k)) \delta_{u+h},$$
 i.e. by a change of variable,
  $$\sum_{u \in \mathbb{R}, \, \xi(1/2 + i ( \omega' U_k + 2 \pi u/\log U_k)) = 0} m(1/2 + i ( \omega' U_k + 2 \pi u/\log U_k)) \delta_{u},$$
 where 
 $$\omega' = \omega - 2 \pi h / (U_k \log U_k). $$
  It is possible to couple the distributions of $\omega$ and $\omega'$ in such a way that the corresponding variables are equal to each other with probability $1 - \mathcal{O}( |h|/ (U_k \log U_k))$. This provides a similar coupling of 
  $$\sum_{u \in \mathbb{R}, \, \xi(1/2 + i ( \omega U_k + 2 \pi u/\log U_k)) = 0} m(1/2 + i ( \omega U_k + 2 \pi u/\log U_k)) \delta_{u}$$
  and 
  $$\sum_{u \in \mathbb{R}, \, \xi(1/2 + i ( \omega U_k + 2 \pi u/\log U_k)) = 0} m(1/2 + i ( \omega U_k + 2 \pi u/\log U_k)) \delta_{u+h},$$
  which implies the translation-invariance of the distribution of $M$.

 We now consider the random meromorphic function
 $$z \mapsto  \frac{2i \pi }{\log U_k} \frac{\xi'(1/2 + i ( \omega U_k + 2 \pi z/\log U_k) )}{\xi(1/2 + i( \omega U_k + 2 \pi z/\log U_k) )}.$$
 
 From \cite{S18}, we know that, under the Riemann hypothesis,  this function is in the Herglotz (i.e. Nevanlinna) class, i.e. the images of points in the upper-half plane are in the upper-half plane,  and that it converges in distribution, in the sense detailed in \cite{AW15}, to the random Herglotz function $F$
with poles, counted with multiplicity, following the point measure $M$,  and tending to $i \pi$ at infinity along the imaginary axis.

For $h \in \mathbb{R}$, the function $z \mapsto F(z-h)$ has limit $i \pi$ at $h + i \infty$, and then the same limit at $i \infty$, from general properties of Herglotz functions (in particular, the spectral measure is always integrable with respect to the Cauchy distribution, which allows a suitable application of dominated convergence). The spectral measure of 
$z \mapsto F(z-h)$ is the image of $M$ by translation by $h$, which has the same law as $M$. Hence, $F$ is invariant in distribution by real shift of the argument. 
Moreover, since $M$ is a  point measure (sum of Dirac measures), the limit of the imaginary part of $F$ at a given real point is almost sure equal to zero, which allows to apply Theorem 5.5 of \cite{AW15}, using the second remark following that theorem. We deduce that the distribution of $F$ at a given real point is a Cauchy distribution of analytic barycenter $i \pi$, i.e. with density $x \mapsto 1/(x^2+ \pi^2)$.

 \bibliographystyle{plain}
\bibliography{bibliography}

\end{document}